\newenvironment{proof}{{\noindent \it Proof.}}{\hfill $\blacksquare$\par}
\newtheorem{theorem}{Theorem}[section]
\newtheorem{proposition}[theorem]{\rm\bfseries Proposition}
\newtheorem{Observation}[theorem]{\rm\bfseries Observation}
\newtheorem{lemma}[theorem]{Lemma}
\newtheorem{corollary}[theorem]{\rm\bfseries Corollary}
\newtheorem{remark}[theorem]{Remark}
\newtheorem{conjecture}[theorem]{Conjecture}
\newtheorem{problem}[theorem]{Problem}
\newtheorem{Proof of Theorem 1.7.}[theorem]{Proof of Theorem 1.7.}
\newtheorem{definition}[theorem]{Definition}
\def\NAT@def@citea{\def\@citea{\NAT@separator}}
\begin{document}
	\vspace*{10mm}
	
	\noindent
	{\Large \bf  The connectedness of friends-and-strangers graphs about graph parameters and others}
	
	\vspace*{7mm}
	
	\noindent
	{\large \bf Xinghui Zhao$^1$,  Lihua You*, Jifu Lin$^2$, Xiaoxue Zhang$^3$}
	\noindent
	
	\vspace{7mm}
	
	\noindent
	School of Mathematical Sciences, South China Normal University,  Guangzhou, 510631, P. R. China.

	\noindent
	e-mail: {\tt 2022021990@m.scnu.edu.cn (X. Zhao)},\quad{\tt ylhua@scnu.edu.cn (L. You)},
	
	 \quad\quad{\tt 2023021893@m.scnu.edu.cn (J. Lin)}, \quad{\tt zhang\_xx1209@163.com (X. Zhang)}.
	 
	 \noindent
	 $^*$ Corresponding author
	\noindent
	
	\vspace{7mm}

	\noindent
	{\bf Abstract} \ 
	\noindent
    Let $X$ and $Y$ be two graphs of order $n$. The friends-and-strangers graph $\textup{FS}(X,Y)$ of $X$ and $Y$ is a graph whose vertex set consists of all bijections $\sigma: V(X)\rightarrow V(Y)$, in which two  bijections $\sigma$ and $ \sigma'$ are adjacent if and only if  they agree on all but two adjacent vertices of $X$ such that the corresponding images are adjacent in $Y$. The most fundamental question about these friends-and-strangers graphs is whether they are connected. In this paper, we provide a sufficient condition regarding the maximum degree $\Delta(X)$ and vertex connectivity $\kappa(Y)$ that ensures the graph $\textup{FS}(X,Y)$ is $s$-connected. As a corollary, we improve upon a result by Bangachev and partially confirm  a conjecture he proposed. Furthermore, we completely characterize the connectedness of $\textup{FS}(X,Y)$, where $X\in\textup{DL}_{n-k,k}$.
	\\[2mm]
	
	\noindent
	{\bf Keywords:} \ Friends-and-strangers graphs;  Minimum degree; Maximum degree; Vertex connectivity

	\baselineskip=0.30in
	
	\section{Introduction}
	
	 \hspace{1.5em}Throughout this paper, we consider  simple  graphs. Let $G=(V(G),E(G))$ be a graph with vertex set $V(G)$ and edge set $E(G)$.  We write  $uv \in E(G)$ if the vertices $u$ and $v$ are adjacent in $G$, and $uv \not\in E(G)$ otherwise. Let $N_G(u)$  be the neighbourhood set of $u$ in $G$, $N_G[u]=N_G(u)\cup \{u\}$. Then the degree of the vertex $v$ is equal to $|N(v)|$ (the cardinality of the set $N(v)$), denoted by $d_G(v)$ ($d(v)$ for short). We use $\delta(G)$ and $\Delta(G)$  to denote the minimum degree and the maximum degree. Let $P_n$, $C_n$, $K_n$, $S_n$ and $S_n^+$ denote a path,  a cycle, a complete graph,  a star  of order $n$ and a star with $n$ vertices adding an extra edge, respectively. In this paper, we use $K_n-t e$ to denote the graph obtained from $K_n$ by deleting $t$ vertex-disjoint edges, where $1\leq t\leq \lfloor \frac{n}{2}\rfloor$.  The lollipop graph $\textup{Lollipop}_{n-k,k}$  of order $n$ is obtained by identifying one end of  $P_{n-k+1}$ with a vertex of $K_k$. Then dandelion graph $\textup{Dand}_{n-k,k}$ of order $n$ is obtained by identifying one end of  $P_{n-k+1}$ with center vertex of $S_k$. We define the graph set $\textup{DL}_{n-k,k}$, where  each graph  $G\in \textup{DL}_{n-k,k}$ is a spanning subgraph of $\textup{Lollipop}_{n-k,k}$, and $\textup{Dand}_{n-k,k}$ is a spanning subgraph of $G$. A graph $G$ is $s$-connected if the resulting graph is still connected by removing any $s-1$ vertices from $G$. The vertex connectivity (simply  connectivity)  of $G$  is the maximum value of $s$ for which $G$ is  $s$-connected, denoted by $\kappa(G)$.
	
	The friends-and-strangers graphs were introduced by Defant and Kravitz \cite{dc}, which are defined as follows.
	
	\begin{definition}{\rm(\!\!\cite{dc})}\label{d1}
		Let $X$ and $Y$ be two graphs, each with $n$ vertices. The friends-and-strangers graph $\textup{FS}(X,Y)$ of $X$ and $Y$ is a graph with vertex set consisting of all bijections $\sigma: V(X)\rightarrow V(Y)$, two such bijections $\sigma$, $\sigma'$ are adjacent if and only if there exist two distinct vertices $a,b\in V(X)$ such that:
		
		$\bullet$ $ab\in E(X)$ and $\sigma(a)\sigma(b)\in E(Y)$;
		
		$\bullet$  $\sigma(a)=\sigma'(b)$,  $\sigma(b)=\sigma'(a)$ and  $\sigma(c)=\sigma'(c)$ for all $c\in V(X)\setminus \{a,b\}$.
	\end{definition}
	
   	The friends-and-strangers graph $\textup{FS}(X,Y)$ has the following non-technical interpretation. View $V(X)$ as $n$ different positions and $V(Y)$ as $n$ different people. Two people are friends if and only if they are adjacent in $Y$, and  two positions are  adjacent if and only if they are adjacent in $X$. A bijection from $V(X)$ to $V(Y)$ represents $n$ people standing on these $n$ positions such that each person stands on  exactly one position. At each point in time, two people can swap their positions if and only if they are friends and the two positions they stand in are adjacent. When multiple swaps are permitted, a natural question is how one can transition between different configurations. In particular, under what conditions can any two given configurations be reached from one to another through a finite number of swaps.
	
	A classic example of friends-and-strangers graphs is the famous 15-puzzle: 15 tiles labeled with numbers 1 to 15 and an empty tile are placed on a $4\times4$ grid. At each move, two tiles with numbers are not allowed to exchange positions, but the empty tile can exchange positions with any adjacent tile. By the definition of friends-and-strangers graphs, the game corresponds to	 $\textup{FS}(S_{16},G_{4\times4})$, where $G_{4\times4}$ is a $4\times4$ grid graph and the center vertex of the $S_{16}$ corresponds to the empty tile. Generalizing  the 15-puzzle, Wilson \cite{w} has provided the sufficient and necessary conditions for $\textup{FS}(S_n,Y)$ to be connected.
	
	\begin{theorem}{\rm(\!\!\cite{w})}\label{th1}
		Let $Y$ be a graph of order $n\geq4$. Then $\textup{FS}(S_n,Y)$ is connected if and only if $Y$ is $2$-connected, non-bipartite and  $Y\not\in\{ C_n,\theta_0\}$, where $\theta_0$ is shown in Figure \ref{Figure 1}. Furthermore, if $Y$ is a bipartite,  $2$-connected  graph, and $Y\not\cong C_n$, then $\textup{FS}(S_n,Y)$ has exactly two connected components.
	\end{theorem}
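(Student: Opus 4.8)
The plan is to recast $\textup{FS}(S_n,Y)$ as a token-sliding (``puzzle'') graph and then analyse an associated permutation group. Since $S_n$ is a star, every edge of $S_n$ is incident to its centre, so in any move it is the image of the centre that travels along an edge of $Y$; thus the image of the centre plays the role of a single ``hole'' sliding on $Y$, while the other $n-1$ images are labelled tokens occupying the remaining vertices. Under this identification $\textup{FS}(S_n,Y)$ is exactly the puzzle graph of $Y$, and two configurations lie in the same component iff one token arrangement can be carried to the other by walking the hole around $Y$. The first step is to fix the hole at a base vertex $v$ and introduce the \emph{puzzle group} $G_v\le \mathrm{Sym}(V(Y)\setminus\{v\})$ of all token permutations realised by closed hole-walks from $v$ to $v$; once $Y$ is connected the hole reaches every vertex, and the number of components of $\textup{FS}(S_n,Y)$ is $(n-1)!/|G_v|$. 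The theorem thus reduces to computing $G_v$.

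For necessity I would dispose of each failure mode in turn. If $Y$ is not $2$-connected, then either $Y$ is disconnected (so $\textup{FS}$ is trivially disconnected) or $Y$ has a cut vertex, and a trapping argument shows certain tokens cannot be interchanged, so $G_v$ is a proper subgroup. If $Y\cong C_n$, the hole can only rotate the tokens cyclically, so $G_v$ is cyclic of order $n-1$ and $\textup{FS}$ has $(n-2)!$ components. If $Y$ is bipartite, each elementary move transposes the hole with an adjacent token, and tracking which side of the bipartition the hole occupies shows that every closed hole-walk induces an \emph{even} permutation; hence $G_v\le A_{n-1}$, and a check that the reachable arrangements form precisely two classes yields the ``exactly two components'' refinement. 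Finally, $\theta_0$ is the genuinely sporadic case, where a direct computation shows $G_v$ is a proper subgroup of $S_6$ of index $6$, so $\textup{FS}$ is disconnected.

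For sufficiency, assume $Y$ is $2$-connected, non-bipartite, and $Y\notin\{C_n,\theta_0\}$; the goal is $G_v=S_{n-1}$, forcing a single component. First, $2$-connectivity together with the fact that $Y$ is not a cycle guarantees a theta-subgraph (two branch vertices joined by three internally disjoint paths), and sliding the hole around two of its three cycles produces a commutator realising a $3$-cycle of tokens. Next, transitivity of $G_v$ on the $n-1$ tokens follows from $2$-connectivity and the ability to route the hole through any prescribed pair of vertices. Combining transitivity with the presence of a $3$-cycle, and verifying that $G_v$ is primitive, Jordan's theorem gives $G_v\supseteq A_{n-1}$. Finally, since $Y$ has an odd cycle, walking the hole once around it cyclically rotates an even number of tokens, an \emph{odd} permutation, so $G_v\not\le A_{n-1}$ and hence $G_v=S_{n-1}$.

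The main obstacle is two-fold. The first hard part is proving that $\theta_0$ is the \emph{only} exception: this requires an essentially case-by-case analysis of the puzzle groups of small theta graphs (and a few other small $2$-connected graphs), since the commutator argument that manufactures a $3$-cycle degenerates precisely in these small cases, and one must rule out further sporadic proper subgroups before the asymptotic machinery applies. The second is the inductive bookkeeping that upgrades a single $3$-cycle to all of $A_{n-1}$ uniformly over admissible $Y$: one processes $Y$ through an ear decomposition and must show that attaching each new ear to a graph whose puzzle group already contains the alternating group keeps it at least that large while preserving primitivity. Verifying primitivity (to invoke Jordan's theorem) and the parity argument from non-bipartiteness are the two delicate points where the hypotheses are used in an essential way.
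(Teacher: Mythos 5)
This statement is Wilson's 1974 theorem, which the paper quotes from \cite{w} without giving a proof, so there is no in-paper argument to compare against; your proposal is, in outline, a faithful reconstruction of Wilson's original approach. The identification of $\textup{FS}(S_n,Y)$ with the puzzle graph of $Y$, the reduction of connectivity to the puzzle group $G_v$ (with component count $(n-1)!/|G_v|$ for connected $Y$), the cut-vertex trapping argument, the cyclic group of order $n-1$ for $C_n$ giving $(n-2)!$ components, the bipartite parity invariant $G_v\le A_{n-1}$ (closed hole-walks in a bipartite graph have even length, hence induce even permutations), the odd-cycle rotation producing an odd permutation, and the sporadic computation $[S_6:G_v]=6$ for $\theta_0$ are all correct and are exactly the ingredients of the cited source.

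That said, two steps in your sufficiency sketch are genuine gaps rather than deferred routine work. First, the claim that sliding the hole around two of the three cycles of a theta subgraph ``produces a commutator realising a $3$-cycle'' is false as stated: the two rotations are cycles whose supports share all tokens along the common path (at minimum the two branch vertices), and the commutator of two cycles with such overlap generally has large support; a commutator is guaranteed to be a $3$-cycle only when the supports meet in a single point. Manufacturing a $3$-cycle is precisely the hard case analysis of theta graphs in Wilson's paper, and any such argument \emph{must} break for $\theta_0$, whose puzzle group contains no $3$-cycle at all --- yet your sketch never localizes where the hypothesis $Y\not\cong\theta_0$ enters, so taken literally it would ``prove'' $G_v\supseteq A_6$ for $\theta_0$ too. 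Second, Jordan's theorem requires primitivity, and transitivity together with containing a $3$-cycle does not yield it (an imprimitive transitive group can contain a $3$-cycle supported inside a block), so ``verifying that $G_v$ is primitive'' is substantive content, not a check; relatedly, the bipartite refinement to ``exactly two components'' needs the full conclusion $G_v=A_{n-1}$, i.e., the same machinery again, not merely ``a check that the reachable arrangements form precisely two classes.'' You honestly flag these as the delicate points, but in a complete proof they constitute most of the work, and as written the argument is a correct skeleton of Wilson's proof rather than a proof.
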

	
\begin{figure}[!h]
	\centering
	\begin{tikzpicture}
		\node[anchor=south west,inner sep=0] (image) at (0,0) {\includegraphics[width=0.6\textwidth]{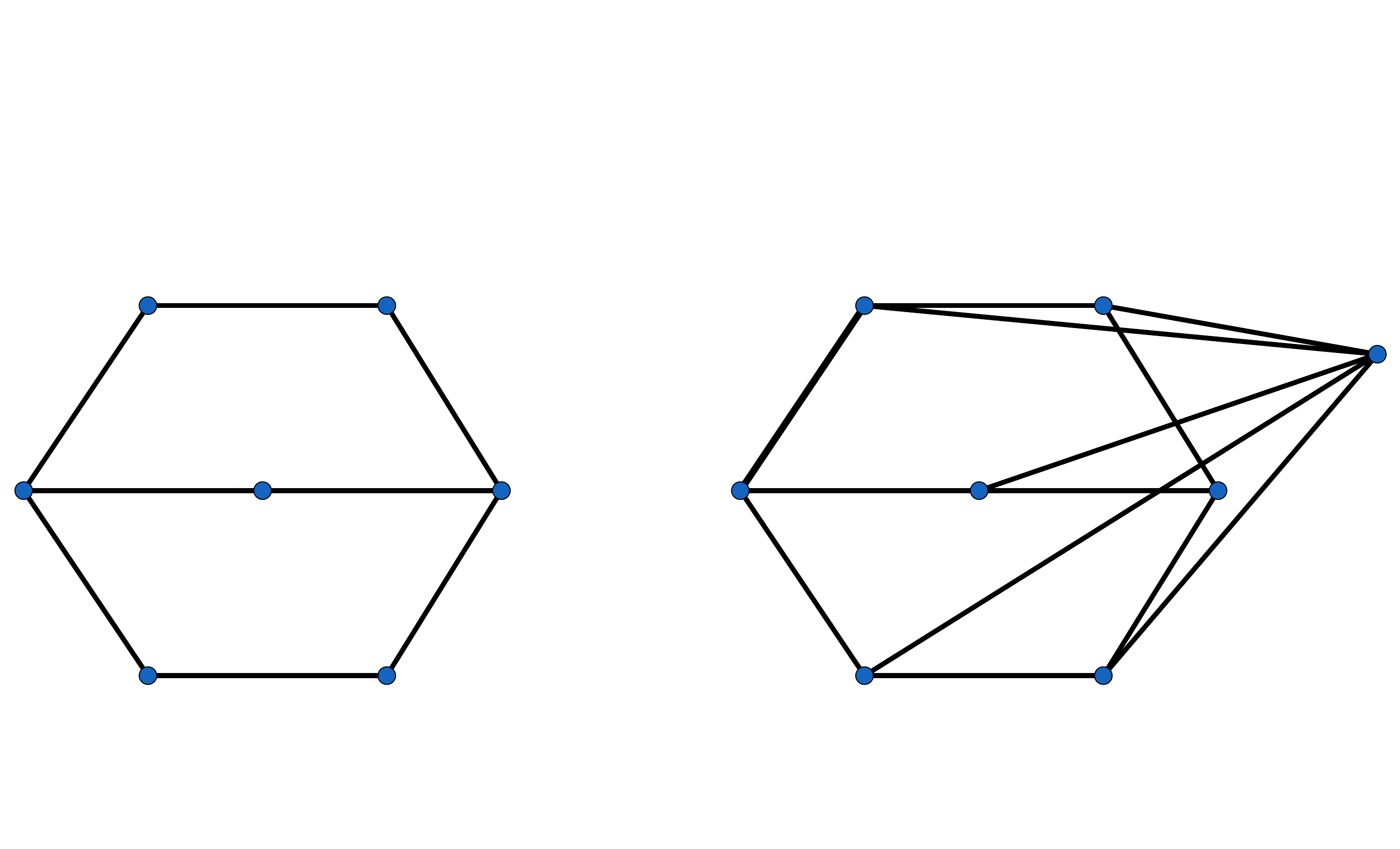}};
		\begin{scope}[
			x={(image.south east)},
			y={(image.north west)}
			]
			\node [black, font=\bfseries] at (0.2,0) {$\theta_0$ };
			\node [black, font=\bfseries] at (0.7,0) {$\theta_1$ };

		\end{scope}
	\end{tikzpicture}
	
	\caption{Graphs $\theta_0$ and $\theta_1$.}\label{Figure 1}
\end{figure}
	
	The questions and results in literature on the friends-and-strangers graph $\textup{FS}(X,Y)$ can be roughly divided into three categories: Both $X$ and $Y$ are random graphs \cite{Al,a,wang}, or at least one of $X$, $Y$ is a concrete graph, such as paths, cycles, lollipop graphs, spiders \cite{wang2,wang3,zhu,dc,dc2}, or $X$ and $Y$ have some extremal structures, such as a lower bound on their respective minimum degree \cite{Al,B,k}.
	
	In this paper, we address questions in the second and third categories.
	
	In 2022, Bangachev \cite{B} studied the minimum degree condition for $X$ and $Y$ that guarantees the connectedness of $\textup{FS}(X,Y)$, and  proposed   Conjecture \ref{c1}.
	
	\begin{theorem}{\rm(\!\!\cite{B})}\label{th2}
		Suppose that $X$ and $Y$ are two connected graphs on $n$ vertices satisfying $\min(\delta(X),\delta(Y))+2\max(\delta(X),\delta(Y))\geq2n$. Then $\textup{FS}(X,Y)$ is connected.
	\end{theorem}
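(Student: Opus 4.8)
The plan is to first normalize the hypothesis. Since $\sigma\mapsto\sigma^{-1}$ induces a graph isomorphism $\textup{FS}(X,Y)\cong\textup{FS}(Y,X)$, I may assume without loss of generality that $\delta(X)\le\delta(Y)$, so that the hypothesis reads $\delta(X)+2\delta(Y)\ge 2n$. Two consequences I would record at once: combined with $\delta(X)\le\delta(Y)$ the hypothesis gives $3\delta(Y)\ge 2n$, hence $\delta(Y)\ge\lceil 2n/3\rceil$, so $Y$ is very dense; and since $\delta(Y)\le n-1$ it also forces $\delta(X)\ge 2$, so $X$ has no leaves. The density of $Y$ is the engine of the argument: by Ore/Dirac-type bounds, $\delta(Y)\ge 2n/3\ge (n+1)/2$ (for $n\ge 3$) makes $Y$ Hamiltonian-connected, and any two people share at least $2\delta(Y)-n\ge n/3$ common friends.

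I would then prove connectivity by induction on $n$ via a \emph{park one person, then freeze it} scheme. Fix a position $x_0\in V(X)$ and a person $y_0\in V(Y)$, chosen carefully below. The scheme has two independent pieces. (a) A \textbf{Parking Lemma}: from an arbitrary configuration there is a walk in $\textup{FS}(X,Y)$ reaching some $\sigma$ with $\sigma(x_0)=y_0$. (b) A \textbf{Freezing step}: the subgraph of $\textup{FS}(X,Y)$ induced by the configurations with $\sigma(x_0)=y_0$, in which the admissible moves never touch position $x_0$, is exactly $\textup{FS}(X-x_0,\,Y-y_0)$, which is connected by the inductive hypothesis. Combining these yields the theorem, since any two configurations can be routed into this frozen slice and then connected inside it.

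For the Parking Lemma I would route $y_0$ to $x_0$ along a path of $X$ (available since $X$ is connected), pushing $y_0$ across each edge by swapping it with the person currently occupying the next position; the swap is legal precisely when that person is a friend of $y_0$. Because $y_0$ has at least $2n/3$ friends, a blocking non-friend can first be replaced by a common friend of $y_0$ using one or two auxiliary local swaps, the $\ge n/3$ common-friend bound and the Hamiltonian-connectedness of $Y$ providing the room to return the displaced people afterwards. Making this local surgery precise and verifying it never gets stuck is technical but manageable.

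The main obstacle I anticipate is the \emph{degree bookkeeping in the induction}, because the hypothesis is essentially tight. Passing from $n$ to $n-1$ requires $\delta(X-x_0)+2\delta(Y-y_0)\ge 2n-2$, whereas deleting one vertex from each graph can drop the left-hand side by as much as $1+2=3$. Hence I cannot freeze an arbitrary pair: I must choose $(x_0,y_0)$ so that at least the high-coefficient term $\delta(Y)$ does not decrease, for instance by taking $x_0$ non-adjacent in $X$ to every minimum-degree vertex, and/or $y_0$ of degree exceeding $\delta(Y)$ and non-adjacent to all minimum-degree vertices of $Y$. Guaranteeing that such a choice exists, and handling the extremal configurations where it does not (e.g. $Y$ regular, or the minimum-degree vertices forming a dominating set), is where the real work lies and may require a strengthened induction hypothesis or a direct argument. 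Finally I would settle the small-$n$ base cases by hand; note that $X$ is never a star here (as $\delta(X)\ge 2$), so Wilson's Theorem \ref{th1} does not serve directly as the base case, though it remains a useful guide and can be invoked in any reduction producing a star-like structure.
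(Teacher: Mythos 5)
First, a point of reference: this paper does not prove Theorem \ref{th2} at all --- it is quoted from Bangachev \cite{B} --- so your proposal can only be judged on its own terms, and on its own terms it has a genuine gap, namely exactly the one you flag yourself: the induction does not close. Deleting the pair $(x_0,y_0)$ can decrease $\delta(X)+2\delta(Y)$ by $3$ while the target $2n$ decreases only by $2$, and your proposed repair --- choosing $x_0$ (resp.\ $y_0$) non-adjacent to all minimum-degree vertices --- is impossible in precisely the extremal configurations the theorem must cover: if $Y$ is regular then every vertex is a minimum-degree vertex and $\delta(Y-y_0)=\delta(Y)-1$ for every choice of $y_0$, and the admissible pair $X\cong C_n$, $Y\cong K_n$ satisfies the hypothesis with equality while being $2$-regular and $(n-1)$-regular. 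For that pair a single freezing step leaves $X'\cong P_{n-1}$, $Y'\cong K_{n-1}$ with $\delta(X')+2\delta(Y')=2n-3<2(n-1)$: the inductive hypothesis is simply unavailable, even though $\textup{FS}(P_{n-1},K_{n-1})$ is in fact connected. So the statement is too weak to serve as its own induction hypothesis; identifying the right strengthening (or a global argument replacing the induction) is the actual content of any such proof, and your proposal explicitly defers it (``may require a strengthened induction hypothesis or a direct argument''). A smaller omission of the same kind: $x_0$ must also be chosen as a non-cut vertex of $X$, otherwise $X-x_0$ is disconnected and the frozen slice $\textup{FS}(X-x_0,Y-y_0)$ is useless; this constraint interacts with the degree constraints you already cannot satisfy.

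The Parking Lemma is likewise asserted rather than proved, and the tool you invoke is the wrong one: Hamiltonian-connectedness of $Y$ produces paths among \emph{people}, but legal swaps are constrained by adjacency of \emph{positions} in $X$, which may be as sparse as a cycle, so ``one or two auxiliary local swaps'' with ``room to return the displaced people'' has no justification as stated. The step can be made honest by counting instead: if $y_0$ occupies $p$ and is blocked at the adjacent position $x$ by a non-friend $b$, then $x$ has at least $\delta(X)-1$ neighbours $q\neq p$ in $X$, and among their occupants at most $n-1-\delta(Y)$ are non-friends of $b$ and at most $n-1-\delta(Y)$ are non-friends of $y_0$; since $(\delta(X)-1)-2\bigl(n-1-\delta(Y)\bigr)=\delta(X)+2\delta(Y)-2n+1\geq 1$, some occupant $c$ is a common friend, so one may swap $b$ with $c$ and then advance $y_0$ into $x$. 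Note that this consumes the hypothesis with zero slack, which is further evidence that a naive freeze-and-recurse scheme, losing an extra unit of slack per frozen vertex, cannot survive unchanged. As it stands, the proposal is a reasonable opening move whose two load-bearing steps --- the inductive bookkeeping and the local surgery --- are both missing, the first provably so in the form proposed.
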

	
	\begin{theorem}{\rm(\!\!\cite{B})}\label{th3}
		Suppose that $X$ and $Y$ are two graphs on $n\geq 6$ vertices satisfying $\delta(X),\delta(Y)>\frac{n}{2}$ and $2\min(\delta(X),\delta(Y))+3\max(\delta(X),\delta(Y))\geq3n$. Then $\textup{FS}(X,Y)$ is connected.
	\end{theorem}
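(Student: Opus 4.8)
The plan is to reduce the connectivity of $\textup{FS}(X,Y)$ to the (elementary) connectivity of $\textup{FS}(K_n,Y)$, which holds because $\delta(Y)>\tfrac{n}{2}$ forces $Y$ to be connected. In $\textup{FS}(K_n,Y)$ a move is exactly the swap of two tokens adjacent in $Y$, independent of the positions they occupy, and such edge-transpositions generate the full symmetric group on $V(Y)$ precisely when $Y$ is connected. Hence it suffices to prove the following \emph{in-place exchange property}: for every bijection $\sigma$ and every pair of tokens $y,y'$ with $yy'\in E(Y)$, some sequence of legal moves carries $\sigma$ to the bijection that swaps $y$ with $y'$ and fixes every other token. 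Granting this, the transpositions $(y\,y')$ over the edges of $Y$ are realizable from \emph{any} configuration, they generate $S_n$, and so every configuration is reachable from every other.

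The engine for the exchange property is a triangle-rotation gadget. Write $p=\sigma^{-1}(y)$ and $q=\sigma^{-1}(y')$. If $pq\in E(X)$ the swap is immediate, so assume $pq\notin E(X)$ and seek a common neighbour $r$ of $p$ and $q$ in $X$ whose occupant $z=\sigma(r)$ is a common friend of $y$ and $y'$ in $Y$. Performing the three swaps along the edges $pr$, $qr$, $pr$ in turn is legal at each step (it uses $y\sim z$, then $y\sim y'$, then $y'\sim z$), and it returns $z$ to $r$ while exchanging $y$ and $y'$ and leaving all other tokens untouched. The existence of such an $r$ is a counting statement: the common neighbours of $p,q$ in $X$ number at least $2\delta(X)-n$, the common friends of $y,y'$ in $Y$ number at least $2\delta(Y)-n$, and since $\sigma$ is a bijection these two sets meet inside $V(Y)$ as soon as $(2\delta(X)-n)+(2\delta(Y)-n)>n$, that is, $\delta(X)+\delta(Y)>\tfrac{3n}{2}$. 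This already yields a clean sufficient condition and settles the theorem in the symmetric regime.

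The main obstacle is pushing the hypothesis down to the asymmetric bound $2\min(\delta(X),\delta(Y))+3\max(\delta(X),\delta(Y))\ge 3n$, which is strictly weaker than $\delta(X)+\delta(Y)>\tfrac{3n}{2}$ and is what makes the statement an improvement. The single-triangle gadget spends the degrees of $X$ and $Y$ symmetrically, so it is blind to the $2$-versus-$3$ asymmetry. Using the isomorphism $\textup{FS}(X,Y)\cong\textup{FS}(Y,X)$ I would orient the roles so that the graph of larger minimum degree plays the part of $X$ above, and then weaken the rigid demand that the helper $z$ be adjacent to \emph{both} $y$ and $y'$: in a two-step variant one first routes a suitable auxiliary token into a common-neighbour position, trading a scarce common-friend condition in the low-degree graph (costing $2\delta-n$) for cheaper single-adjacency conditions (costing only $\delta$) at the expense of demanding more adjacency from the high-degree graph. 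Converting this trade-off into a valid count is exactly what turns the estimate into $2\min+3\max\ge 3n$, and the careful bookkeeping here, rather than any single new structural idea, is the crux.

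Finally, the conditions $\delta(X),\delta(Y)>\tfrac{n}{2}$ and $n\ge 6$ are what start the machine: they make $X$ and $Y$ both $2$-connected and non-bipartite, guarantee that every pair of vertices has common neighbours (so the gadget has room to operate), and exclude the small sporadic exceptions that already appear in Theorem~\ref{th1} (the roles of $C_n$ and $\theta_0$). Once the in-place exchange property is secured under the refined count, the reduction to $\textup{FS}(K_n,Y)$ closes the argument and gives connectivity of $\textup{FS}(X,Y)$.
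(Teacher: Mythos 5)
First, a bookkeeping point: this statement is quoted in the paper from \cite{B} and is not proved there, so your attempt can only be measured against the cited source. Your overall strategy is the standard and correct one (and is the skeleton of the argument in \cite{B} and its predecessors): reduce connectedness of $\textup{FS}(X,Y)$ to an in-place exchange property for every edge $yy'\in E(Y)$, since $\delta(Y)>\frac{n}{2}$ forces $Y$ connected and edge-transpositions of a connected graph generate the symmetric group. Your triangle gadget is also correct as stated: with $p=\sigma^{-1}(y)$, $q=\sigma^{-1}(q')$ non-adjacent in $X$, the sets $N_X(p)\cap N_X(q)$ and $N_Y(y)\cap N_Y(y')$ have sizes at least $2\delta(X)-n$ and $2\delta(Y)-n$, and the pigeonhole step goes through exactly when $\delta(X)+\delta(Y)>\frac{3n}{2}$.

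The genuine gap is that this is where your proof stops, and the theorem's entire content lies beyond that point. The hypothesis $2\min(\delta(X),\delta(Y))+3\max(\delta(X),\delta(Y))\geq 3n$ admits pairs with $\delta(X)+\delta(Y)\leq\frac{3n}{2}$ (for example $n=10$, $\delta$-values $6$ and $9$), so the single-triangle count provably does not cover the stated range, and your own text concedes this. What you offer in its place --- ``a two-step variant one first routes a suitable auxiliary token into a common-neighbour position'' --- is a plan, not a proof: you never specify the gadget (which position is pre-loaded, with which token, via which edge of $X$), never verify that each intermediate swap is legal in both graphs and that the auxiliary token is restored, and never carry out the count showing that charging three adjacency constraints to the denser graph and two to the sparser one yields exactly $2\min+3\max\geq 3n$ rather than some other mixture. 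This bookkeeping is not routine: one must ensure the pre-routing step does not consume the same vertices counted for the triangle (the sets involved overlap, and the bijection $\sigma$ shifts between position-side and token-side counts), which is precisely why \cite{B} needs a careful case analysis there. As written, your argument establishes only the weaker symmetric statement, so the theorem as stated remains unproved.
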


	\begin{conjecture}{\rm(\!\!\cite{B})}\label{c1}
		Suppose that $X$ and $Y$ are two connected n-vertex graphs satisfying the condition $2\min(\delta(X),\delta(Y))+3\max(\delta(X),\delta(Y))\geq3n$. Then $\textup{FS}(X,Y)$ is connected.
	\end{conjecture}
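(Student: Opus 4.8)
The plan is to use the symmetry $\textup{FS}(X,Y)\cong\textup{FS}(Y,X)$ (the isomorphism $\sigma\mapsto\sigma^{-1}$) to assume $\delta(X)\le\delta(Y)$, and then to reinterpret the hypothesis as a near-completeness condition on $Y$. Writing $d_1=\delta(X)$ and $d_2=\delta(Y)$, the inequality $2d_1+3d_2\ge 3n$ is equivalent to $d_2\ge n-\tfrac23 d_1$, so the complement $H:=\overline{Y}$ satisfies $\Delta(H)=n-1-d_2\le \tfrac23 d_1-1$; that is, $Y=K_n-H$ where $H$ has maximum degree strictly below $\tfrac23\delta(X)$, and any two people are friends unless they form one of the few non-edges recorded by $H$. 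If $d_1>n/2$ (whence $n\ge 6$ after setting aside finitely many small orders), the hypotheses of Theorem \ref{th3} already hold, so the only genuinely new regime is $d_1\le n/2$, which forces $d_2\ge\tfrac{2n}{3}$ and, by the standard minimum-degree bound $\kappa(Y)\ge\min\{n-1,\,2d_2-n+2\}$, makes $Y$ highly connected (at least $\tfrac{n}{3}+1$). Crucially, Theorem \ref{th2} need not help here, since the hypothesis only guarantees $d_1+2d_2\ge 2n-\tfrac13 d_1$, which can fall short of $2n$.

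The benchmark I would fix first is the complete case: $\textup{FS}(X,K_n)$ is connected for every connected $X$, because a swap across any edge of $X$ is always permitted and adjacent-position swaps along a connected graph sort any configuration. It then suffices to show that reinstating the sparse family of non-friendships $H$ cannot disconnect $\textup{FS}(X,Y)$. The central local statement to prove is a routing lemma: whenever we wish to transpose the people $p=\sigma(a)$ and $q=\sigma(b)$ across an edge $ab\in E(X)$ but $pq\in E(H)$, we can still realize the transposition $(p\,q)$ as a short product of permitted swaps through a common friend $r$ of $p$ and $q$, using the identity $(p\,q)=(p\,r)(r\,q)(p\,r)$. Such an $r$ can be sought among the people occupying the $X$-neighbours of $a$: at most $\Delta(H)$ of them fail to befriend $p$ and at most $\Delta(H)$ fail to befriend $q$, so a usable relay exists once $\delta(X)>2\Delta(H)$.

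With the routing lemma available, the global argument would mirror the proof for $\textup{FS}(X,K_n)$: drive an arbitrary $\sigma$ to a fixed canonical bijection by placing people at their targets one at a time, using ordinary friend-swaps where possible and invoking the routing lemma to bypass each $H$-non-edge as it is met. The bookkeeping that needs care is ensuring a relay never permanently displaces an already-placed person; this is arranged by performing every relay locally, inside the neighbourhood of the active edge, and by ordering the placements so that fixed positions are subsequently touched only by friend-swaps.

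The main obstacle is the precise constant. A single three-way relay drawn from the neighbours of one endpoint needs $\delta(X)>2\Delta(H)$, i.e. $\Delta(H)<\tfrac12\delta(X)$, which is comfortably weaker than the $\Delta(H)<\tfrac23\delta(X)$ allowed by the conjecture; reaching the latter requires pooling the neighbours of both $a$ and $b$ and, in the densest cases, iterating the relay, whose compatibility counts become fragile exactly when a minimum-degree vertex of $X$ is surrounded by people that are almost all mutual $H$-neighbours of $p$ and $q$. These extremal configurations, together with the failure of a naive induction on $n$ (deleting one vertex from each side yields only $2\delta+3\delta\ge 3n-5$, two short of the required $3n-3$), are where the argument stalls. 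This is precisely why the conjecture appears reachable in full only after the low-degree structure of $X$ is further controlled, and why one expects a clean confirmation in the relay-friendly sub-regime but only a partial resolution near the $\tfrac23$ boundary.
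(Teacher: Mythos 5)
Your proposal is not a proof of the statement, and you say as much in your closing paragraph: the argument ``stalls'' precisely in the regime the conjecture is about. Concretely, your routing lemma, drawing a relay $r$ from the occupants of $N_X(a)$, needs $\delta(X)>2\Delta(\overline{Y})$, i.e.\ $\delta(X)+2\delta(Y)>2n-2$ --- which is essentially the hypothesis of Theorem \ref{th2}, already known. The conjecture permits $\Delta(\overline{Y})$ up to roughly $\tfrac23\delta(X)$, and the step that would bridge the gap (``pooling the neighbours of both $a$ and $b$ and \ldots iterating the relay'') is asserted, not carried out; it is exactly where the difficulty lives. There is also a concrete local flaw even inside your relay-friendly regime: the identity $(p\,q)=(p\,r)(r\,q)(p\,r)$ cannot be realized by swaps across edges of $X$ when $r$ sits on a position $c$ adjacent only to $a$. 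After swapping across $ac$ and then $ab$, the third move would have to exchange $p$ and $q$ directly (forbidden, since $pq\in E(\overline{Y})$) or use the edge $bc$, which need not exist; what two swaps actually produce is a $3$-cycle displacing $r$, so your bookkeeping claim that relays act locally and never permanently disturb already-placed people fails as stated. You would need $r$ on a \emph{common} $X$-neighbour of $a$ and $b$ (but $X$ may be triangle-free when $\delta(X)\le n/2$), or rebuild the sorting argument around $3$-cycles rather than transpositions.

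For comparison: the paper does not prove the conjecture either --- it is quoted from \cite{B} as open --- but it confirms it under the extra hypothesis $\Delta(X)\geq\tfrac43\delta(X)$ (Theorem \ref{mt3}), by a route entirely different from yours. Instead of local relay counting, the paper proves a global statement, Theorem \ref{mt1}: if $\Delta(X)+\kappa(Y)\geq n+s-1$ (with one of $X,Y$ non-bipartite and $Y\notin\{C_n,\theta_0\}$), then $\textup{FS}(X,Y)$ is $s$-connected, established by induction on $n$ after passing to a spanning tree of $X$ and deleting a leaf far from a maximum-degree vertex, with Wilson's Theorem \ref{th1} at the base and Lemma \ref{zy} supplying the gluing step. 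The degree hypothesis of the conjecture is then converted into this $\Delta$--$\kappa$ condition via the Chartrand--Harary bound $\kappa(Y)\geq 2\delta(Y)+2-n$ (Lemma \ref{lem5}); unwinding the arithmetic shows this conversion succeeds exactly when $\Delta(X)\gtrsim\tfrac43\delta(X)$, which is why the paper's confirmation is also only partial. If you want to salvage your approach, note that its honest yield ($\Delta(\overline{Y})<\tfrac12\delta(X)$) is weaker than the paper's partial result whenever $\Delta(X)$ noticeably exceeds $\delta(X)$, since the paper exploits maximum rather than minimum degree of $X$.
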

	
	In fact, by $\textup{FS}(X,Y)\cong\textup{FS}(Y,X)$ \rm(\!\!\cite{dc}), we can always let $\delta(Y)\geq \delta(X)$ in Theorems \ref{th2}, \ref{th3} and Conjecture \ref{c1}. In this paper, we studied some graph parameters condition for $X$ and $Y$ that guarantees the connectedness of $\textup{FS}(X,Y)$.
	
		\begin{theorem}\label{mt21}
		Let $s\geq2$ be an integer, $X$ and $Y $ be two connected bipartite graphs of order $n$. Then
		
		{\rm(i)} if $\Delta(X)+\kappa(Y)\geq n+s-1$ and $Y\not\cong C_n$, then $\textup{FS}(X,Y)$  has exactly two connected components and each connected component is  $s$-connected;
		
		{\rm(ii)}  for any integers $\Delta$ and $\kappa$ that satisfy $\Delta+\kappa\leq n$, $\Delta\leq n-1$, and $1\leq\kappa\leq \lfloor \frac{n}{2}\rfloor$, there exists a connected bipartite graph $X$ such that $\Delta(X)=\Delta$,  for any connected bipartite graph $Y$ with $\kappa(Y)=\kappa$, the graph $\textup{FS}(X,Y)$ has at least three connected components.
	\end{theorem}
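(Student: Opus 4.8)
The plan is to handle the two parts by different means: part (i) via a local‑to‑global reduction to Wilson's theorem (Theorem~\ref{th1}), and part (ii) via an explicit bipartite gadget together with a conserved invariant taking at least three values.

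\textbf{Setup and the key local estimate for (i).} Fix a vertex $w\in V(X)$ with $d_X(w)=\Delta(X)$, and put $T=N_X[w]$, so $|T|=\Delta(X)+1$ and the edges $wx$ with $x\in N_X(w)$ span a star $S_{\Delta(X)+1}$ inside $X$. For a bijection $\sigma$ let $B_\sigma=Y[\sigma(T)]$ be the board induced in $Y$ by the $\Delta(X)+1$ people currently occupying $T$. The arithmetic heart of the argument is the chain $\kappa(B_\sigma)\ge \kappa(Y)-\bigl(n-\Delta(X)-1\bigr)\ge s$, using the standard inequality $\kappa(G-S)\ge \kappa(G)-|S|$ and the hypothesis $\Delta(X)+\kappa(Y)\ge n+s-1$. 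Since $\kappa(Y)\le n-1$ this also forces $\Delta(X)\ge s$, so $B_\sigma$ has at least $s+1\ge 3$ vertices; and being an induced subgraph of the bipartite graph $Y$, each $B_\sigma$ is a bipartite, $s$-connected (hence $2$-connected) board.

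\textbf{Exactly two components.} Because $B_\sigma$ is $2$-connected and bipartite, Wilson's theorem applied to the local star $S_{\Delta(X)+1}$ shows that $\textup{FS}(S_{\Delta(X)+1},B_\sigma)$ has \emph{exactly two} components, provided $B_\sigma\not\cong C_{\Delta(X)+1}$. For $s\ge 3$ this exclusion is automatic since a cycle is only $2$-connected; for $s=2$ I would rule it out from $Y\not\cong C_n$ (when $\Delta(X)=n-1$ one has $B_\sigma=Y$ outright, and for smaller $\Delta(X)$ a short lemma produces an admissible set of $\Delta(X)+1$ people whose board is $2$-connected but not a cycle). This yields the crucial \emph{hub move}: fixing every person off $T$, we may realize all even permutations of the people on $T$, in particular every $3$-cycle. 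Combining these hub moves with the connectivity of $X$ — route a target person to a leaf of $w$ along a path in $X$, place it by a local $3$-cycle, and proceed person by person — produces a path between any two bijections in the same class of the bipartite parity invariant. Since $X$ and $Y$ are bipartite, this invariant splits the vertex set into two nonempty classes, each shown connected, giving exactly two components.

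\textbf{Upgrading to $s$-connectivity (the main obstacle).} To pass from connectedness to $s$-connectivity, fix a set $Z$ of at most $s-1$ bijections and two further bijections $\sigma,\tau$ in one parity class, and seek a $\sigma$--$\tau$ walk avoiding $Z$. The slack in the hypothesis is exactly what supplies this: each board $B_\sigma$ is not merely $2$-connected but $s$-connected, so at every stage of the routing there are at least $s$ internally disjoint local continuations, and $s-1$ forbidden configurations can never block all of them. I expect this to be the technical crux, for two reasons: the freedom must be exercised \emph{simultaneously} with the global person‑by‑person routing, and the board $B_\sigma$ changes with $\sigma$. The cleanest route is to isolate a stand‑alone lemma asserting that each component of $\textup{FS}(S_m,B)$ is $s$-connected whenever $B$ is an $s$-connected bipartite non‑cycle board, and then transport it along the $X$-routes by a Menger‑type disjoint‑paths argument; the boundary case $s=2$ additionally requires disposing of Wilson's exceptional boards.

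\textbf{Sharpness, part (ii).} For the lower bound I would take an explicit bipartite $X$ with $\Delta(X)=\Delta$ of broom/dandelion type — a star $S_{\Delta+1}$ centered at $w$ with a path appended to one leaf to reach order $n$, so that $\Delta(X)=\Delta$ and the appended tail consists of vertices of degree at most $2$ — and show $\textup{FS}(X,Y)$ has at least three components for \emph{every} bipartite $Y$ with $\kappa(Y)=\kappa$ and $\Delta+\kappa\le n$. The lever is a minimum vertex cut $W$ of $Y$ with $|W|=\kappa$, so $Y-W=C_1\sqcup\cdots\sqcup C_m$ with $m\ge 2$: two people in different $C_i$ are never adjacent in $Y$, so they can be reordered only by using the $\kappa$ people of $W$ as couriers, and the bottleneck built into $X$ together with $\Delta+\kappa\le n$ starves these couriers. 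I would package this as the bipartite parity (two values) refined by a conserved "trapped‑side" count coming from $W$ and the bottleneck, and verify it is invariant under every swap while attaining at least three values. The obstacle here is purely combinatorial bookkeeping: tuning the tail length and checking the refined invariant is genuinely conserved and genuinely three‑valued for every admissible $Y$, which is precisely where the inequalities $\Delta+\kappa\le n$, $\Delta\le n-1$, and $1\le\kappa\le\lfloor n/2\rfloor$ must all be invoked.
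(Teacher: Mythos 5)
Your part (i) contains a genuine gap at exactly the point the whole theorem turns on. The local estimate $\kappa(B_\sigma)\ge \kappa(Y)-(n-\Delta(X)-1)\ge s$ is correct, and the hub moves inside $\textup{FS}(S_{\Delta(X)+1},B_\sigma)$ are fine, but the step ``route a target person to a leaf of $w$ along a path in $X$, place it by a local $3$-cycle, and proceed person by person'' is asserted, not proved: a swap along an $X$-edge outside the hub is legal only if the two people involved are adjacent in $Y$, and for an arbitrary bipartite $Y$ there is no reason the people sitting along the chosen $X$-path admit such swaps. This routing claim --- in the stronger form where $s-1$ forbidden bijections must also be avoided --- is precisely the paper's Lemma \ref{zy}, and its proof there is a delicate induction on $n$ (delete a leaf $x_0\notin N_X[x_1]$ of a spanning tree and the vertex $\sigma(u)$ of $Y$, embed $\textup{FS}(X-x_0,Y-\sigma(u))$ as a slice via Lemma \ref{l63}, and iterate the bookkeeping $V_{1,i}$ when the escape vertex $\sigma_i$ happens to be forbidden), with the star base case needing Wilson plus the cyclic-ordering analysis of $\textup{FS}(S_n,C_n)$ (Corollary \ref{l65}) and the component-connectivity result Lemma \ref{lem611}. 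Your ``Menger-type transport'' for $s$-connectivity names the same difficulty without resolving it; note also that your $s=2$ escape hatch fails as stated, since the board $B_\sigma$ changes with $\sigma$, so exhibiting one admissible non-cycle board does not help when an intermediate configuration has $B_\sigma\cong C_{\Delta(X)+1}$, where hub moves collapse to cyclic rotations. The paper's actual proof of (i) never routes people directly: it inducts on $n$, uses the induction hypothesis on the slice $\{\sigma:\sigma(x_0)=y_0\}$ (choosing $y_0$ with $Y-y_0\not\cong C_{n-1}$), and glues with Lemma \ref{zy} via Observation \ref{p2}(iii).

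For part (ii) you chose the right gadget (the dandelion $\textup{Dand}_{n-\Delta,\Delta}$ is exactly the paper's $X$), but the proposed proof --- a bipartite parity refined by a conserved ``trapped-side count'' taking three values --- is never constructed, and you concede the conservation check is open; it is far from clear such an explicit invariant exists for every admissible $Y$. The paper sidesteps invariants entirely with a short contradiction: $\textup{FS}(\textup{Dand}_{n-\Delta,\Delta},Y)$ has at least two components by Lemma \ref{lem32}; if it had exactly two, then Lemma \ref{b2} (adding an edge between two leaves in the same part merges the two components into a connected graph) together with Lemma \ref{lem3} would make $\textup{FS}(\textup{Lollipop}_{n-\Delta,\Delta},Y)$ connected, contradicting Lemma \ref{th4}, since $\Delta+\kappa\le n$ means $Y$ is not $(n-\Delta+1)$-connected. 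Replacing your bookkeeping with this three-line argument closes part (ii); part (i) requires supplying the full routing lemma, which is the paper's main technical content.
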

	
%
%
%

\begin{theorem}\label{mt1}
	Let $s\geq2$ be an integer, $X$, $Y$ be two connected graphs of order $n$, and among which at least one is non-bipartite, $Y\not\in\{ C_n,\theta_0\}$, $\Delta(X)+\kappa(Y)\geq n+s-1$. Then  $\textup{FS}(X,Y)$ is $s$-connected.
\end{theorem}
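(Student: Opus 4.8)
The plan is to anchor everything on a vertex $w\in V(X)$ of maximum degree, $d_X(w)=\Delta(X)$. Write $A=N_X(w)$ and $B=V(X)\setminus(\{w\}\cup A)$, so that $|A|=\Delta(X)$ and $|B|=n-1-\Delta(X)$; the hypothesis $\Delta(X)+\kappa(Y)\ge n+s-1$ then rearranges into the inequality that does all of the work, namely $|B|\le\kappa(Y)-s$. The point is that $\{w\}\cup A$ carries a spanning star $S_{\Delta(X)+1}$ centered at $w$ (here $\Delta(X)+1=n-|B|$), which I would use as a ``hub'' on which people are shuffled exactly as in Wilson's theorem, while the few positions of $B$ are managed using the slack $\kappa(Y)-|B|\ge s$. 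Small boundary cases where $\Delta(X)\le 2$ (which force $Y=K_n$ and $s=2$) I would dispose of directly.

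For the connectivity skeleton, fix a set $Q\subseteq V(Y)$ with $|Q|=|B|$ and freeze it on the positions $B$; the remaining $n-|B|$ people then occupy the hub, and their motion there is precisely the dynamics of $\textup{FS}(S_{n-|B|},Y-Q)$, where $Y-Q=Y[V(Y)\setminus Q]$ has order $n-|B|$. Because $|Q|=|B|<\kappa(Y)$, the graph $Y-Q$ is $2$-connected (indeed $s$-connected), and for a suitable choice of the frozen set $Q$ the degenerate cases $C_{n-|B|}$ and $\theta_0$ are excluded by $\kappa(Y)\ge s$ together with $Y\notin\{C_n,\theta_0\}$ (when $\kappa(Y)=2$ one forces $B=\emptyset$ and $Y-Q=Y$). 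Wilson's Theorem~\ref{th1} then lets me permute the hub freely, up to the bipartite parity obstruction. To pass between configurations with different contents on $B$, I route people in and out of $B$ across the edges between $A$ and $B$, which exist since $X$ is connected and $w$ misses $B$: because every person has degree at least $\kappa(Y)$ in $Y$, the hub can always be prepared so as to present a friend of the person waiting at a $B$-position, making the swap along an $A$--$B$ edge legal.

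The hypothesis that at least one of $X,Y$ is non-bipartite, together with $Y\notin\{C_n,\theta_0\}$, is used exactly where Theorem~\ref{th1} needs it: if $Y$ (and hence each residual $Y-Q$) is non-bipartite, the hub already generates all arrangements; if $Y$ is bipartite, then $X$ is non-bipartite, and I would exploit an odd closed walk in $X$ to merge the two parity classes that the hub alone produces, mirroring the non-bipartite clause of Theorem~\ref{th1}. Combined with the $B$-routing this shows $\textup{FS}(X,Y)$ is connected, which is the content implicit in the case $s=2$.

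For the $s$-connectivity itself I would take any set $W$ of $s-1$ vertices of $\textup{FS}(X,Y)$ and any two configurations $\sigma_0,\sigma_1\notin W$, and construct a path from $\sigma_0$ to $\sigma_1$ avoiding $W$. The decisive resource is the bound $\kappa(Y)-|B|\ge s$: at each stage of the routing above there are at least $\kappa(Y)-|B|\ge s$ genuinely independent options---for instance at least $s$ distinct friends that can be brought to $w$, or $s$ distinct intermediate configurations realizing the same net move---so at every step at least one option avoids all of the at most $s-1$ forbidden configurations of $W$. I expect the main obstacle to lie precisely here: turning this local abundance of moves into a globally valid detour, i.e.\ guaranteeing that after dodging $W$ the canonical target form is still reachable and that the $s$ alternatives remain internally disjoint rather than collapsing onto one another. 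Controlling this interaction---especially in the bipartite-$Y$ case, where the parity-merging moves are comparatively rigid and consume some of the available slack---is the delicate part of the argument.
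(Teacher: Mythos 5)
Your global skeleton (a maximum-degree hub carrying a star, Wilson's theorem on the hub, routing through the small set $B$ using the slack $\kappa(Y)-|B|\ge s$, and an odd structure in $X$ to merge the two parity classes when $Y$ is bipartite) is in the same spirit as the paper, which also reduces to a spanning tree of maximum degree and bottoms out at $S_n$ via Theorem~\ref{th1} and Lemma~\ref{lem611}. But there is a genuine gap exactly where you flag it, and it is not a technicality: the step from ``at each stage there are at least $\kappa(Y)-|B|\ge s$ options, so one option avoids the $s-1$ deleted configurations'' to ``$\textup{FS}(X,Y)$ minus $s-1$ vertices is connected'' is invalid as stated. Local abundance of moves does not yield a global path: the $s$ alternatives available at one stage need not extend to internally disjoint continuations, a single deleted vertex can simultaneously block all natural re-mergings of your detours, and in the bipartite-$Y$ case the deleted vertices can sit precisely on the scarce parity-merging configurations. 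The paper's entire technical core is devoted to closing exactly this hole: its Lemma~\ref{zy} proves, by induction on $n$ with the star case (Lemma~\ref{l62}) handled via a parity-counting argument in the two Wilson components, that after deleting any $s-1$ vertices, every surviving configuration reaches one with a prescribed value at a prescribed position; the iterative construction $\sigma_1,\sigma_2,\ldots$ in Case~2 of that proof, which succeeds because at most $|V_2|\le s-1$ attempts can fail, is the quantitative mechanism your sketch lacks. This is then assembled through Observation~\ref{p2} and an outer leaf-peeling induction (fixing $\sigma(x_0)=y_0$ and invoking Lemma~\ref{l63}), rather than by freezing all of $B$ at once.

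Two further points in your sketch are underjustified. First, the claim that a ``suitable choice of $Q$'' always avoids $Y-Q\in\{C_{n-|B|},\theta_0\}$ is not enough: during the routing the occupants of $B$ change, so intermediate stages may park a bad set on $B$ and paralyze the hub; moreover when $s=2$ and $\kappa(Y)=3$ the borderline graphs are real, and the paper must treat them separately (Lemma~\ref{l2} for $\textup{FS}(\textup{Dand}_{2,n-2},W_n)$, Lemma~\ref{b1} for $\theta_1$, plus the small cases $\textup{FS}(P_4,K_4)$ and $\textup{FS}(P_5,K_5)$ by direct computation), with Lemma~\ref{bz1} guaranteeing a vertex $y_0$ whose removal keeps $Y$ non-bipartite. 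Second, when $Y$ is bipartite your odd-closed-walk merging must preserve $s$-connectivity, not mere connectivity: it does not suffice to find one edge between the two parity components. The paper's Lemma~\ref{b2} constructs $(n-1)(n-2)$ vertex-disjoint edges between the two components of $\textup{FS}(T,Y)$ created by an edge $x_2x_3$ inside one part of the spanning tree's bipartition, and only then Observation~\ref{p2}(i) upgrades the merged graph to $s$-connected; your proposal would need an analogous quantitative count. So the approach is salvageable, but as written it omits the rerouting lemma and the disjoint-merging count that constitute the actual proof.
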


	As an application of Theorem \ref{mt1}, we obtain the following two conclusions.

	\begin{theorem} \label{mt2}
		Let $s\geq2$ be an integer, $X$ and $Y$ be two connected graphs of order $n (\geq6)$. Then  we have
		
	{\rm(i)} if $Y\cong K_n$, then	$\textup{FS}(X,Y)$ is $\Delta(X)$-connected.
	
	{\rm(ii)} if $Y\not\cong K_n$, $\delta(Y)\geq \delta(X)$, $\Delta(X)+2\delta(Y)\geq2n+s-2$, then $\textup{FS}(X,Y)$ is $(s+1)$-connected.
	\end{theorem}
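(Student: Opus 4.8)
The plan is to deduce both parts directly from Theorem~\ref{mt1}, the only real work being to convert the degree hypotheses into a lower bound on $\kappa(Y)$ large enough to feed into that theorem. The tool for this is the classical inequality
\[
\kappa(G)\ \geq\ 2\delta(G)-n+2
\]
valid for every non-complete graph $G$ of order $n$. I would either cite this or give its short proof: let $S$ be a minimum vertex cut and let $A,B$ be two distinct components of $G-S$; any vertex $u\in A$ satisfies $\delta(G)\leq d(u)\leq |S|+(|A|-1)$, so $|A|\geq \delta(G)-\kappa(G)+1$, and likewise $|B|\geq \delta(G)-\kappa(G)+1$; substituting into $|A|+|B|+|S|\leq n$ gives the claim. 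This is exactly why the case $Y\cong K_n$, where no vertex cut exists, must be split off.

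For part~(i) I would use $\kappa(K_n)=n-1$ together with the facts that $K_n$ is non-bipartite and distinct from $C_n$ and $\theta_0$ for $n\geq 6$, and that a connected graph on $n\geq 6$ vertices has $\Delta(X)\geq 2$, so that $s:=\Delta(X)\geq 2$ is admissible in Theorem~\ref{mt1}. With this choice the hypothesis $\Delta(X)+\kappa(K_n)\geq n+s-1$ reduces to the identity $\Delta(X)+(n-1)=n+\Delta(X)-1$, so Theorem~\ref{mt1} yields at once that $\textup{FS}(X,K_n)$ is $\Delta(X)$-connected.

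For part~(ii), since $Y\not\cong K_n$ the displayed inequality applies and gives $\kappa(Y)\geq 2\delta(Y)-n+2$. Combining it with the hypothesis $\Delta(X)+2\delta(Y)\geq 2n+s-2$ produces
\[
\Delta(X)+\kappa(Y)\ \geq\ \Delta(X)+2\delta(Y)-n+2\ \geq\ (2n+s-2)-n+2\ =\ n+s\ =\ n+(s+1)-1,
\]
which is precisely the degree--connectivity hypothesis of Theorem~\ref{mt1} for the parameter $s+1$. It then remains only to verify the structural side conditions. Using $\Delta(X)\leq n-1$ in the hypothesis gives $2\delta(Y)\geq n+s-1$, hence $\delta(Y)>n/2$; since a bipartite graph of order $n$ has minimum degree at most $\lfloor n/2\rfloor$, the graph $Y$ must be non-bipartite, and since $\delta(Y)>n/2\geq 3$ while $\delta(C_n)=\delta(\theta_0)=2$, we also get $Y\notin\{C_n,\theta_0\}$. (The normalization $\delta(Y)\geq\delta(X)$ is harmless and matches the convention fixed after Conjecture~\ref{c1}.) All hypotheses of Theorem~\ref{mt1} now hold for $s+1$, so $\textup{FS}(X,Y)$ is $(s+1)$-connected.

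I do not expect a genuine obstacle in this argument: once the bound $\kappa\geq 2\delta-n+2$ is available, the derivation is essentially bookkeeping. The points demanding the most care are isolating $Y\cong K_n$ (where that bound degenerates and must be replaced by $\kappa(K_n)=n-1$) and confirming the exclusions $Y\notin\{C_n,\theta_0\}$ together with the non-bipartiteness of $Y$ --- all of which fall out cleanly from the forced inequality $\delta(Y)>n/2$.
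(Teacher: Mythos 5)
Your proposal is correct and matches the paper's own proof essentially step for step: part (i) applies Theorem~\ref{mt1} with $s=\Delta(X)$ using $\kappa(K_n)=n-1$, and part (ii) invokes the Chartrand--Harary bound $\kappa(Y)\geq 2\delta(Y)+2-n$ (the paper's Lemma~\ref{lem5}, which you reprove rather than cite) to get $\Delta(X)+\kappa(Y)\geq n+s$, then uses $\delta(Y)\geq\frac{n+1}{2}$ to rule out bipartiteness and $Y\in\{C_n,\theta_0\}$, exactly as the paper does. No gaps; the only difference is your inclusion of the short cut-set argument for the connectivity bound, which the paper simply cites.
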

	
		Theorem \ref{mt2} not only improves upon Theorem \ref{th2}, but also proves   Conjecture \ref{c1} is correct when $\Delta(X)\geq \frac{4}{3}\delta(X)$, which plugs $s=2$ as shown below.
		
		\begin{theorem} \label{mt3}
		Let $s\geq2$ be an integer, $X$ and $Y$ be two connected graphs of order $n (n\geq6)$, $\delta(Y)\geq \delta(X)$, $\Delta(X)\geq \frac{4}{3}\delta(X)$, $2\delta(X)+3\delta(Y)\geq3n+\frac{3s}{2}-3$. Then  $\textup{FS}(X,Y)$ is $s$-connected.
	\end{theorem}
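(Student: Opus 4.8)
The plan is to derive Theorem \ref{mt3} directly from Theorem \ref{mt2}, using the extra hypothesis $\Delta(X)\geq\frac{4}{3}\delta(X)$ to convert the minimum-degree bound $2\delta(X)+3\delta(Y)\geq 3n+\frac{3s}{2}-3$ into the mixed bound $\Delta(X)+2\delta(Y)\geq 2n+s-2$ demanded by Theorem \ref{mt2}(ii). Accordingly, I would split the argument into two cases according to whether $Y$ is complete, matching the dichotomy in the statement of Theorem \ref{mt2}.

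First I would treat the case $Y\cong K_n$. Here $\delta(Y)=n-1$, so the hypothesis $2\delta(X)+3\delta(Y)\geq 3n+\frac{3s}{2}-3$ collapses to $2\delta(X)\geq\frac{3s}{2}$, i.e.\ $\delta(X)\geq\frac{3s}{4}$. Combining this with $\Delta(X)\geq\frac{4}{3}\delta(X)$ yields $\Delta(X)\geq s$. By Theorem \ref{mt2}(i), $\textup{FS}(X,Y)$ is $\Delta(X)$-connected, and since $\Delta(X)\geq s$ it is in particular $s$-connected.

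Next I would treat the case $Y\not\cong K_n$. Rewriting $\Delta(X)\geq\frac{4}{3}\delta(X)$ as $\frac{3}{2}\Delta(X)\geq 2\delta(X)$ and substituting into the degree hypothesis gives $\frac{3}{2}\Delta(X)+3\delta(Y)\geq 3n+\frac{3s}{2}-3$; multiplying through by $\frac{2}{3}$ produces exactly $\Delta(X)+2\delta(Y)\geq 2n+s-2$. Since $\delta(Y)\geq\delta(X)$ and $n\geq 6$ are already in force, Theorem \ref{mt2}(ii) applies and shows $\textup{FS}(X,Y)$ is $(s+1)$-connected, hence $s$-connected.

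The argument is essentially bookkeeping once the reduction is set up; the only point requiring care is the $Y\cong K_n$ branch, where Theorem \ref{mt2}(ii) does not apply and one must instead verify $\Delta(X)\geq s$ in order to invoke Theorem \ref{mt2}(i). I expect no genuine obstacle beyond checking that each inequality manipulation is tight enough — in particular that the factor $\frac{4}{3}$ in $\Delta(X)\geq\frac{4}{3}\delta(X)$ is precisely what is needed to turn the coefficient pattern $(2,3)$ of the minimum-degree condition into the pattern $(1,2)$ required by Theorem \ref{mt2}(ii).
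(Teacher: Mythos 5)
Your proposal is correct and matches the paper's approach: the paper derives Theorem \ref{mt3} directly from Theorem \ref{mt2}, stating only that it ``holds immediately,'' and your two cases ($Y\cong K_n$ via part (i) with the check $\Delta(X)\geq s$, and $Y\not\cong K_n$ via the algebraic conversion $2\delta(X)\leq\frac{3}{2}\Delta(X)$ into the bound $\Delta(X)+2\delta(Y)\geq 2n+s-2$ for part (ii)) are exactly the details the paper leaves implicit.
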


	

		In 2021, Defant and  Kravitz \cite{dc} showed that $\textup{FS}(\textup{Lollipop}_{n-3,3},Y)$ is connected if and only if  $\textup{FS}(\textup{Dand}_{n-3,3},Y)$  is connected. In 2023, Wang and Chen \cite{wang3} obtained the sufficient and necessary conditions for $\textup{FS}(\textup{Lollipop}_{n-k,k},Y)$ to be connected, and   proposed the following problem for further research.

	\begin{problem}{\rm(\!\!\cite{wang3})}\label{p1}
		For what $k$ and $n$, it holds that $\textup{FS}(\textup{Lollipop}_{n-k,k},Y)$ is connected if and only if $\textup{FS}(\textup{Dand}_{n-k,k},Y)$  is connected?
	\end{problem}
	
	In \cite{wang3}, the authors also studied Problem \ref{p1} and demonstrated that the statement  holds when $n\geq 2k-1$  and does not hold when $n=k$. We completely solve this problem in this paper. Furthermore, as an application of Theorem \ref{mt1}, we  obtain the sufficient and necessary conditions for $\textup{FS}(X,Y)$ to be connected when $X\in\textup{DL}_{n-k,k}$, $k+1\leq n$.

	\section{Preliminaries}\label{sec-pre}

	\hspace*{6mm}In this section, we recall some results which will be used in the next sections.
	
	
	\begin{lemma}{\rm(\!\!\cite{g})}\label{g}
		Let $Y$ be a graph of order $n$. Then $\textup{FS}(K_n,Y)$ is connected if and only if $Y$ is connected.
	\end{lemma}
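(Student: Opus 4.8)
The plan is to translate adjacency in $\textup{FS}(K_n,Y)$ into the language of transpositions and then reduce the statement to the classical fact that the edge-transpositions of a graph generate the full symmetric group exactly when the graph is connected. First I would observe that because $X=K_n$, the condition $ab\in E(X)$ in Definition \ref{d1} holds for every pair of positions, so the only requirement for a swap is $\sigma(a)\sigma(b)\in E(Y)$. Consequently, passing from $\sigma$ to an adjacent bijection $\sigma'$ amounts to left-multiplying $\sigma$ by a transposition $(uv)\in\mathrm{Sym}(V(Y))$ with $uv\in E(Y)$: if $\sigma(a)=u$ and $\sigma(b)=v$, then $\sigma'=(uv)\circ\sigma$.

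Next I would let $H\le\mathrm{Sym}(V(Y))$ be the subgroup generated by $\{(uv):uv\in E(Y)\}$. A walk in $\textup{FS}(K_n,Y)$ starting at $\sigma$ reaches precisely the bijections of the form $h\circ\sigma$ with $h\in H$, so the connected component of $\sigma$ is the right coset $H\sigma$. Hence the components of $\textup{FS}(K_n,Y)$ are in bijection with the right cosets of $H$, their number being $n!/|H|$, and $\textup{FS}(K_n,Y)$ is connected if and only if $H=\mathrm{Sym}(V(Y))$. This reformulation is the conceptual crux; everything else concerns when the edge-transpositions generate the whole symmetric group.

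It then remains to show $H=\mathrm{Sym}(V(Y))$ iff $Y$ is connected. For the easy direction, if $Y$ is disconnected with vertex components $C_1,\dots,C_r$, every generating transposition swaps two vertices inside a single $C_i$, so $H$ fixes each $C_i$ setwise and thus $H\le\prod_{i}\mathrm{Sym}(C_i)$, a proper subgroup; therefore $\textup{FS}(K_n,Y)$ is disconnected. For the converse, assume $Y$ is connected and fix a spanning tree $T$. I would prove by induction, peeling off a leaf $\ell$ adjacent to $p$ in $T$, that the edge-transpositions of $T$ already generate $\mathrm{Sym}(V(Y))$: by induction the transpositions on $T-\ell$ generate $\mathrm{Sym}(V(Y)\setminus\{\ell\})$, and the conjugation identity $(\ell v)=(pv)(\ell p)(pv)$ produces every transposition involving $\ell$, after which all transpositions—and hence all of $\mathrm{Sym}(V(Y))$—lie in $H$.

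The main obstacle is this last step: verifying that a connected $Y$ forces $H$ to be the full symmetric group. The coset reformulation is routine once the transposition picture is set up, and the disconnected case is immediate, but the inductive generation argument via the identity $(ac)=(ab)(bc)(ab)$ applied along the tree is where the genuine content lies. One must check only that the base case and the leaf-peeling remain valid for all $n\ge1$; no delicate estimates are required.
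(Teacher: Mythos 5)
Your proof is correct: with $X\cong K_n$ the edge condition $ab\in E(X)$ is vacuous, adjacency in $\textup{FS}(K_n,Y)$ is exactly left multiplication by an edge-transposition of $Y$, the components are the cosets of the subgroup $H$ these transpositions generate, and your spanning-tree induction via $(\ell\,v)=(p\,v)(\ell\,p)(p\,v)$ correctly shows $H=\mathrm{Sym}(V(Y))$ precisely when $Y$ is connected, with the disconnected case handled by the setwise-stabilizer containment. The paper gives no proof of this lemma (it is quoted from Godsil and Royle), and your coset reformulation plus leaf-peeling argument is essentially the classical proof of the cited result, so your approach matches the intended one.
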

	
	We use $X\preceq Y$ to denote that $X$ is a spanning subgraph of $Y$.
	
	\begin{lemma}{\rm(\!\!\cite{dc})}\label{lem3}
		Let $X, \widetilde{X}, Y, \widetilde{Y}$ be graphs of order $n$.  Then $\textup{FS}(\widetilde{X},\widetilde{Y})\preceq\textup{FS}(X,Y)$ if $\widetilde{X}\preceq X,\widetilde{Y}\preceq Y$. In particular, $\textup{FS}(X,Y)$ is connected if $\textup{FS}(\widetilde{X},\widetilde{Y})$ is connected. 
	\end{lemma}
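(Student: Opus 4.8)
The plan is to verify directly from Definition \ref{d1} that $\textup{FS}(\widetilde{X},\widetilde{Y})$ and $\textup{FS}(X,Y)$ share the same vertex set and that every edge of the former is an edge of the latter. First I would observe that, since $\widetilde{X}\preceq X$ and $\widetilde{Y}\preceq Y$ are spanning subgraphs, we have $V(\widetilde{X})=V(X)$ and $V(\widetilde{Y})=V(Y)$. Consequently the set of bijections $\sigma:V(\widetilde{X})\to V(\widetilde{Y})$ coincides with the set of bijections $\sigma:V(X)\to V(Y)$, so the two friends-and-strangers graphs have identical vertex sets; thus $\textup{FS}(\widetilde{X},\widetilde{Y})$ is automatically spanning in the vertex-set sense, and it remains only to compare edges.

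For the edge inclusion, I would take an arbitrary edge $\sigma\sigma'$ of $\textup{FS}(\widetilde{X},\widetilde{Y})$ and show it is an edge of $\textup{FS}(X,Y)$. By Definition \ref{d1} there are distinct $a,b\in V(\widetilde{X})$ with $ab\in E(\widetilde{X})$, $\sigma(a)\sigma(b)\in E(\widetilde{Y})$, and with $\sigma,\sigma'$ differing only by the transposition of the images of $a$ and $b$. Since $E(\widetilde{X})\subseteq E(X)$ we get $ab\in E(X)$, and since $E(\widetilde{Y})\subseteq E(Y)$ we get $\sigma(a)\sigma(b)\in E(Y)$; the second bullet of Definition \ref{d1} (the swap condition $\sigma(a)=\sigma'(b)$, $\sigma(b)=\sigma'(a)$, and $\sigma(c)=\sigma'(c)$ otherwise) makes no reference to the graphs and therefore still holds verbatim. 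Hence the same pair $a,b$ witnesses that $\sigma\sigma'$ is an edge of $\textup{FS}(X,Y)$, establishing $\textup{FS}(\widetilde{X},\widetilde{Y})\preceq\textup{FS}(X,Y)$.

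Finally, for the \emph{in particular} assertion I would use the elementary fact that a graph containing a connected spanning subgraph is itself connected: every pair of bijections joined by a path in $\textup{FS}(\widetilde{X},\widetilde{Y})$ is joined by the very same path in $\textup{FS}(X,Y)$, so connectedness of the smaller graph forces connectedness of the larger. There is no genuine obstacle here; the whole argument is a direct unwinding of the definition. The only point deserving a moment's care is confirming that the swap condition is purely combinatorial data attached to the bijections and is left untouched by enlarging the edge sets, so that edges can only be gained, never lost, when passing from $\widetilde{X},\widetilde{Y}$ to $X,Y$.
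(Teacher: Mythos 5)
Your proof is correct, and it is essentially the standard argument: the paper cites this lemma from Defant--Kravitz without reproducing a proof, and the original proof is exactly this direct unwinding of Definition \ref{d1}, using that spanning subgraphs give the same vertex set of bijections while the swap witness $(a,b)$ carries over since $E(\widetilde{X})\subseteq E(X)$ and $E(\widetilde{Y})\subseteq E(Y)$. No gaps; the \emph{in particular} clause follows, as you say, because a graph with a connected spanning subgraph is connected.
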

	
		\begin{lemma}{\rm(\!\!\cite{dc})}\label{lem31}
	Let $Y$ be a graph of order $n$. Then $\textup{FS}(S_n^+,Y)$ is connected if $Y$ is $2$-connected  and $Y\not\in\{ C_n,\theta_0\}$.
	\end{lemma}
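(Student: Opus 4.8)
The plan is to exploit the spanning-subgraph relation $S_n\preceq S_n^+$ together with Wilson's theorem (Theorem~\ref{th1}). Write $S_n^+=S_n+uv$, where $c$ is the centre of the star and $u,v$ are the two leaves joined by the extra edge. By Lemma~\ref{lem3}, $\textup{FS}(S_n,Y)\preceq\textup{FS}(S_n^+,Y)$; in particular both graphs have the same vertex set (all bijections $V(S_n^+)\to V(Y)$), and every edge of $\textup{FS}(S_n,Y)$ survives in $\textup{FS}(S_n^+,Y)$. If $Y$ is non-bipartite, then $Y$ is $2$-connected with $Y\notin\{C_n,\theta_0\}$, so $\textup{FS}(S_n,Y)$ is already connected by Theorem~\ref{th1} and we are done. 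Hence the entire content lies in the bipartite case, where Theorem~\ref{th1} gives that $\textup{FS}(S_n,Y)$ has \emph{exactly two} connected components $H_1,H_2$ (the bipartite part of Theorem~\ref{th1} requires only $Y\not\cong C_n$, which we have; this forces $n\geq 4$, as smaller $2$-connected $Y$ would be $C_n$). It then suffices to produce a single edge of $\textup{FS}(S_n^+,Y)$ with one endpoint in $H_1$ and the other in $H_2$, since attaching one cross edge to two connected graphs covering all vertices yields a connected graph, namely all of $\textup{FS}(S_n^+,Y)$.

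To locate such a cross edge I would track a parity invariant. Fix a bipartition $V(Y)=A\sqcup B$, a reference bijection $\sigma_0$, and for any bijection $\sigma$ set $\iota(\sigma)=\operatorname{sgn}(\sigma_0^{-1}\circ\sigma)\cdot\epsilon(\sigma(c))$, where $\epsilon(y)=+1$ for $y\in A$ and $\epsilon(y)=-1$ for $y\in B$. Every edge of $\textup{FS}(S_n,Y)$ is a swap $\sigma'=\sigma\circ(c\,\ell)$ for some leaf $\ell$ with $\sigma(c)\sigma(\ell)\in E(Y)$. Such a swap multiplies $\operatorname{sgn}$ by $-1$, and, since $Y$ is bipartite and $\sigma(c)\sigma(\ell)\in E(Y)$, it sends the image of the centre to the opposite part, so $\epsilon(\sigma(c))$ also flips. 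The two sign changes cancel, so $\iota$ is invariant along every edge of $\textup{FS}(S_n,Y)$; consequently any two bijections lying in the same component of $\textup{FS}(S_n,Y)$ have equal $\iota$.

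Finally I would check that the extra edge flips $\iota$. A swap along $uv$ is $\sigma'=\sigma\circ(u\,v)$ with $u,v$ leaves and $\sigma(u)\sigma(v)\in E(Y)$; it again multiplies $\operatorname{sgn}$ by $-1$, but now $\sigma'(c)=\sigma(c)$, so $\epsilon$ is unchanged and $\iota(\sigma')=-\iota(\sigma)$. Thus $\sigma$ and $\sigma'$ have distinct values of $\iota$ and therefore lie in different components $H_1,H_2$, while $\sigma\sigma'$ is an edge of $\textup{FS}(S_n^+,Y)$ coming from the new edge $uv$. Such a swap exists because $Y$ is $2$-connected, hence has an edge $y_1y_2$: any bijection with $\sigma(u)=y_1$, $\sigma(v)=y_2$ admits the swap. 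This supplies the required cross edge and finishes the proof. The only delicate point is the invariance computation, and specifically the role of bipartiteness: the two sign flips must cancel along the star edges (giving the invariant and thus the two components) yet fail to cancel along the extra edge (giving the cross edge); everything else is bookkeeping on top of Theorem~\ref{th1} and Lemma~\ref{lem3}.
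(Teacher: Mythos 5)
Your proof is correct, and it is worth noting that the paper never proves Lemma \ref{lem31} itself --- it is quoted from \cite{dc} --- so the fair comparison is with the paper's own strengthening, Corollary \ref{c3}, which it proves by exactly your route: split into the non-bipartite case (Wilson's Theorem \ref{th1} plus Lemma \ref{lem3}) and the bipartite case, where $\textup{FS}(S_n,Y)$ has exactly two components and the extra edge of $S_n^+$ supplies cross edges. Your hand-rolled invariant $\iota(\sigma)=\operatorname{sgn}(\sigma_0^{-1}\circ\sigma)\cdot\epsilon(\sigma(c))$ is, up to notation, the invariant $g(\sigma)=|\sigma(A_X)\cap A_Y|+\frac{sgn(\sigma)+1}{2}$ of Lemma \ref{lem34} specialized to $X=S_n$: there $A_X$ is the set of leaves, so $|\sigma(A_X)\cap A_Y|$ equals $|A_Y|$ minus the indicator that $\sigma(c)\in A_Y$, and $g \bmod 2$ carries the same information as $\iota$. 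Likewise your cross-edge computation (a swap of the two leaves $u,v$ flips $\operatorname{sgn}$ but fixes $\sigma(c)$, hence flips $\iota$) is precisely how the paper's Lemma \ref{b2} produces edges between the two components after adding an edge $x_1x_2$ inside one part of the bipartition --- and $u,v$ do lie in one part of $S_n$'s bipartition (leaves versus centre), so Lemma \ref{b2} with $l=1$ gives your bipartite case verbatim. The one delicate hypothesis, the requirement $n\geq 4$ in Theorem \ref{th1}, you handled correctly: for $n=3$ the only $2$-connected graph is $C_3$, which is excluded. In short, your blind argument coincides in substance with both the cited source's method and the paper's own machinery; the paper's version buys the $s$-connected refinement by quantifying how many vertex-disjoint cross edges exist, whereas you only need one.
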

	
	\begin{lemma}{\rm(\!\!\cite{dc})}\label{lem32}
		If both $X$ and $Y$ are bipartite graphs of order $n\geq3$, then $\textup{FS}(X,Y)$ is disconnected.
	\end{lemma}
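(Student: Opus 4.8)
The plan is to exhibit a $\{\pm1\}$-valued invariant that is preserved along every edge of $\textup{FS}(X,Y)$ yet still takes both values, forcing $\textup{FS}(X,Y)$ to be disconnected. Fix bipartitions $V(X)=A_1\sqcup A_2$ and $V(Y)=B_1\sqcup B_2$, and identify $V(X)$ and $V(Y)$ with $\{1,\dots,n\}$ through a reference bijection, so that each bijection $\sigma$ becomes a genuine permutation and $\mathrm{sgn}(\sigma)\in\{\pm1\}$ is well defined. Also set $f(\sigma)=|\{v\in A_1:\sigma(v)\in B_1\}|$.

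The first step is to analyze a single edge. If $\sigma$ and $\sigma'$ are adjacent, they differ by interchanging the images of two vertices $a,b$ with $ab\in E(X)$ and $\sigma(a)\sigma(b)\in E(Y)$. As a permutation this is a transposition, so $\mathrm{sgn}(\sigma')=-\mathrm{sgn}(\sigma)$. Because $X$ is bipartite and $ab\in E(X)$, exactly one of $a,b$ lies in $A_1$, say $a$; because $Y$ is bipartite and $\sigma(a)\sigma(b)\in E(Y)$, exactly one of $\sigma(a),\sigma(b)$ lies in $B_1$. Since $\sigma$ and $\sigma'$ agree off $\{a,b\}$ and only $a$ among $\{a,b\}$ lies in $A_1$, the value of $f$ changes solely through $a$, whose image moves across the bipartition of $Y$; hence $f(\sigma')=f(\sigma)\pm1$. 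Thus both $\mathrm{sgn}(\sigma)$ and $(-1)^{f(\sigma)}$ flip across every edge, and the product $\Phi(\sigma):=\mathrm{sgn}(\sigma)\,(-1)^{f(\sigma)}$ is constant on each connected component of $\textup{FS}(X,Y)$. This is the crux, and the point at which bipartiteness of \emph{both} $X$ and $Y$ is used essentially.

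It remains to produce two bijections on which $\Phi$ disagrees. Since $n\geq3$, one part of $X$ has at least two vertices; after relabelling I may assume $|A_2|\geq2$ and pick distinct $u,v\in A_2$. For any $\sigma$, let $\sigma'$ be obtained by swapping the values $\sigma(u)$ and $\sigma(v)$. Then $\mathrm{sgn}(\sigma')=-\mathrm{sgn}(\sigma)$, while $f$ inspects only vertices of $A_1$ and is therefore unchanged, so $\Phi(\sigma')=-\Phi(\sigma)$. Hence $\Phi$ attains both values, no path in $\textup{FS}(X,Y)$ can join $\sigma$ to $\sigma'$, and $\textup{FS}(X,Y)$ is disconnected.

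I anticipate the only delicate points are the two verifications above: that a single swap changes $f$ by exactly $\pm1$ (never $0$ or $\pm2$), which hinges on the edge endpoints and their images each straddling the two sides of their respective bipartitions; and the bookkeeping that lets me assume $|A_2|\geq2$ without loss of generality (if instead only $A_1$ is large, I would define $f$ via $A_2$ and swap two vertices of $A_1$). Both are short once $\Phi$ is in hand, so the entire conceptual content lies in discovering $\Phi$ and checking its parity behaviour on one swap.
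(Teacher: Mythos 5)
Your proof is correct and is essentially the argument underlying the paper's treatment: your invariant $\Phi(\sigma)=\mathrm{sgn}(\sigma)(-1)^{f(\sigma)}$ is exactly the parity of the quantity $g(\sigma)=|\sigma(A_X)\cap A_Y|+\frac{\mathrm{sgn}(\sigma)+1}{2}$ from Lemma~\ref{lem34} (indeed $(-1)^{g(\sigma)}=-\Phi(\sigma)$), and you conclude by exhibiting two bijections of opposite parity, just as the cited Defant--Kravitz proof does. One minor simplification: swapping the images of two vertices in the \emph{same} part of $X$ leaves $f$ unchanged regardless of which part they lie in (the image set of $A_1$ is unaltered), so your relabelling step and the fallback of redefining $f$ via $A_2$ are unnecessary.
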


	\begin{lemma}{\rm(\!\!\cite{dc})}\label{lem34}
		Let $X$ and $Y$ be two bipartite graphs of order $n\geq3$, $V(X)=V(Y)$, $\{A_X,B_X\}$ and $\{A_Y,B_Y\}$ be a bipartition of $X$ and $Y$, respectively. Then $g(\sigma)$ and $g(\sigma')$ have the same parity if $ \sigma$ and $ \sigma'$ are in the same connected component of $\textup{FS}(X,Y)$ for any $ \sigma, \sigma'\in V(\textup{FS}(X,Y))$, where the set $\sigma(A_X)$ represents the image set of all elements in $A_X$ under the permutation $\sigma$, and $$g(\sigma)=|\sigma(A_X)\cap A_Y|+\frac{sgn(\sigma)+1}{2},$$  $$ sgn(\sigma)=\begin{cases}
			1,&\ \text{if}\ \sigma \text{is an even permutation;}\\
			-1,&\ \text{if}\ \sigma \text{is an odd permutation.}
		\end{cases}$$
	\end{lemma}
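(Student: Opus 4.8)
The plan is to prove that the parity of $g$ is invariant under traversing a single edge of $\textup{FS}(X,Y)$; transitivity along a path then yields the result for any two bijections lying in a common connected component. So I would first reduce to the case where $\sigma$ and $\sigma'$ are adjacent in $\textup{FS}(X,Y)$ and show $g(\sigma)\equiv g(\sigma')\pmod 2$. Since any two vertices of a connected component are joined by a finite path, and every edge of that path preserves the parity of $g$, an induction on the path length gives the general statement.

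For the single-edge step, by Definition \ref{d1} there are adjacent vertices $a,b\in V(X)$ with $\sigma(a)\sigma(b)\in E(Y)$, and $\sigma'=\sigma\circ(a\,b)$, meaning $\sigma'$ swaps the images at $a$ and $b$ and agrees with $\sigma$ elsewhere. I would analyze the two summands of $g$ separately. Since $\sigma'$ differs from $\sigma$ by a single transposition, $sgn(\sigma')=-sgn(\sigma)$, so the term $\tfrac{sgn(\sigma)+1}{2}$ flips between $0$ and $1$, contributing an \emph{odd} change.

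Next I would track the change in $|\sigma(A_X)\cap A_Y|$, where the crux is to invoke bipartiteness of \emph{both} graphs. Because $ab\in E(X)$ and $\{A_X,B_X\}$ is a bipartition, exactly one of $a,b$ lies in $A_X$; say $a\in A_X$ and $b\in B_X$. Hence $\sigma'(A_X)=\bigl(\sigma(A_X)\setminus\{\sigma(a)\}\bigr)\cup\{\sigma(b)\}$, so only the two elements $\sigma(a)$ and $\sigma(b)$ are relevant to the change of the intersection with $A_Y$. Because $\sigma(a)\sigma(b)\in E(Y)$ and $\{A_Y,B_Y\}$ is a bipartition, exactly one of $\sigma(a),\sigma(b)$ lies in $A_Y$; in either sub-case one of these two images lies in $A_Y$ and the other does not, so $|\sigma'(A_X)\cap A_Y|$ changes by exactly $\pm1$---again an \emph{odd} change.

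Adding the two contributions, $g(\sigma')-g(\sigma)$ is a sum of two odd integers and is therefore even, which gives $g(\sigma)\equiv g(\sigma')\pmod 2$ across every edge and completes the argument. I do not expect a serious obstacle; the only point that requires care is the bookkeeping in the third step, namely verifying that bipartiteness of $X$ forces $a$ and $b$ onto opposite sides (so that exactly one image is deleted from $\sigma(A_X)$) while bipartiteness of $Y$ forces $\sigma(a)$ and $\sigma(b)$ onto opposite sides (so that the cardinality change is exactly $\pm1$ rather than $0$ or $\pm2$). It is precisely this interplay of the two bipartite structures that makes the odd sign-change cancel the odd cardinality-change modulo $2$.
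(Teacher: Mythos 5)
Your proof is correct, and it is essentially the argument behind this lemma in the cited source \cite{dc} (the paper itself states the lemma without proof): a single swap flips $sgn(\sigma)$, while bipartiteness of $X$ forces $a,b$ onto opposite sides so that $\sigma'(A_X)=(\sigma(A_X)\setminus\{\sigma(a)\})\cup\{\sigma(b)\}$, and bipartiteness of $Y$ with $\sigma(a)\sigma(b)\in E(Y)$ forces the intersection count to change by exactly $\pm1$, so $g$ is invariant mod $2$ along every edge and hence on each connected component. No gaps; the bookkeeping you flag as the delicate point is handled correctly.
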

	
	\begin{lemma}{\rm(\!\!\cite{dc})}\label{lem33}
		Let $Y$ be a graph of order $n\geq3$. Then $\textup{FS}(C_n,Y)$ is connected if and only if $\overline{Y}$ is a forest consisting of trees $\mathcal{T}_1,\cdots,\mathcal{T}_r$ such that the greatest common divisor of $|V(\mathcal{T}_1)|,\cdots,|V(\mathcal{T}_{r-1})|$ and $|V(\mathcal{T}_{r})|$ is $1$.
	\end{lemma}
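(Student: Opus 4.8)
The plan is to recast $\textup{FS}(C_n,Y)$ as a token-rearrangement problem on a circle and extract the invariants that govern its connectivity. Label the vertices of $C_n$ by $\mathbb Z/n\mathbb Z$ cyclically, so that a bijection $\sigma$ places the ``tokens'' $V(Y)$ at cyclic positions, and an edge of $\textup{FS}(C_n,Y)$ swaps two tokens occupying adjacent positions precisely when those tokens are \emph{friends}, i.e.\ adjacent in $Y$. Equivalently, two tokens may \emph{cross} (exchange their cyclic order) exactly when they are non-adjacent in $\overline{Y}$; a pair $\{u,v\}$ with $uv\in E(\overline Y)$ can never cross. I would then attach to each stranger pair a \emph{winding invariant}: lift the relative displacement $p_\sigma(u)-p_\sigma(v)\in\mathbb Z/n$ to an integer $\widetilde\rho(u,v)$, which changes by $\pm1$ as individual tokens move and for which a crossing of $u$ and $v$ is exactly the event $\widetilde\rho(u,v)\equiv 0\pmod n$. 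Since strangers cannot cross, $\lfloor \widetilde\rho(u,v)/n\rfloor$ is constant on every connected component of $\textup{FS}(C_n,Y)$.

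For necessity of the forest condition, suppose $\overline Y$ contains a cycle $v_1v_2\cdots v_mv_1$. The displacements telescope, $\sum_{i}\widetilde\rho(v_i,v_{i+1})\equiv 0\pmod n$, so this sum equals $n$ times a total winding $W$; because each summand is trapped in a fixed interval $\bigl(k_in,(k_i+1)n\bigr)$ with $k_i$ invariant, $W$ is pinned down, and the configuration obtained by reversing the cyclic sense of $v_1,\dots,v_m$ forces a different value and is therefore unreachable. Hence a cycle in $\overline Y$ produces at least two components, so connectivity forces $\overline Y$ to be a forest. The cleanest instance is a triangle in $\overline Y$: its three tokens are pairwise strangers, so their cyclic order among themselves is frozen, immediately yielding two orientation classes.

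Now assume $\overline Y$ is a forest with tree components $\mathcal T_1,\dots,\mathcal T_r$ of sizes $n_1,\dots,n_r$. The remaining freedom is a single global winding, and the key move is that the $n_i$ tokens of one tree may be rotated once around the whole cycle as a rigid block — legal because every token of $\mathcal T_i$ is a friend of every token outside $\mathcal T_i$ — which shifts the global winding by exactly $n_i$. Thus the realizable winding shifts form the subgroup $\gcd(n_1,\dots,n_r)\,\mathbb Z$, giving a conserved residue modulo $d=\gcd(n_1,\dots,n_r)$, so $d>1$ again forces at least two components. For sufficiency I would show these windings are a \emph{complete} set of invariants: the base case $Y=K_n$ ($\overline Y$ edgeless, all tokens mutual friends) is connected because adjacent transpositions on a cycle generate $S_n$, and in general one routes free or other-tree tokens to simulate any forbidden adjacent swap, then uses a relation $\sum_i c_in_i=1$ (available precisely when $d=1$) together with whole-tree rotations to cancel the accumulated winding. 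With $d=1$ the only winding class is trivial, so all configurations are reachable.

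The main obstacle is this sufficiency/completeness step: proving there are \emph{no} invariants beyond the winding classes, i.e.\ that any two configurations with equal windings are connected. This demands an explicit sorting procedure on the cycle compatible with the non-crossing constraints imposed by the forest $\overline Y$, together with careful bookkeeping so that the $\gcd=1$ hypothesis can be invoked to neutralize the residual winding. In particular, setting up the integer lift of the cyclic displacements so that the winding classes are genuinely well defined — independent of the chosen lift and of the sequence of moves — is the technical heart of the argument.
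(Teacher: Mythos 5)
The paper never proves this lemma: it is imported verbatim from Defant--Kravitz \cite{dc}, whose argument identifies the connected components of $\textup{FS}(C_n,Y)$ with the toric equivalence classes of acyclic orientations of $\overline{Y}$ (via Develin--Macauley--Reiner toric posets) and then shows a forest admits a unique class exactly when the gcd of its tree sizes is $1$. Your winding invariants are essentially that toric data in disguise, so the skeleton is aligned with the known proof --- but as it stands the proposal establishes only one quarter of the equivalence. The sufficiency direction ($\overline{Y}$ a forest with $\gcd=1$ implies connected), which is the substantive half, is never proved; you flag it yourself as ``the main obstacle,'' and the sketch (``route free or other-tree tokens to simulate any forbidden adjacent swap, then use $\sum_i c_in_i=1$ plus whole-tree rotations'') is exactly what needs to be implemented, not a proof. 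The whole-tree rotation is itself nontrivial: the tokens of $\mathcal{T}_i$ need not be contiguous on the circle, and a single token of $\mathcal{T}_i$ cannot be carried around the cycle alone, since it would have to cross its tree-neighbours, which are strangers; so both the realizability of the rotation and the claim that the achievable winding shifts form exactly $\gcd(n_1,\dots,n_r)\,\mathbb{Z}$ require argument. Likewise, in the gcd-necessity step the ``single global winding'' is never defined, nor shown to be invariant modulo $d$ under \emph{arbitrary} friendly swaps rather than just block rotations; without a concretely constructed $\mathbb{Z}/d$-invariant, ``$d>1$ implies disconnected'' is also unproven.

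There is in addition a local error in the one direction you do argue. The inference ``each summand is trapped in a fixed interval $(k_in,(k_i+1)n)$, hence $W$ is pinned down'' is a non sequitur: those intervals only confine $\sum_i\widetilde\rho(v_i,v_{i+1})$ to a window of length $mn$, so $W$ is restricted to $m-1$ possible values, not one. The correct route is a step-size argument: a single swap moves two tokens by $\pm1$ each, so it changes $\sum_i\widetilde\rho(v_i,v_{i+1})$ by at most $4$ in absolute value, while this sum must remain divisible by $n$; hence for $n\geq5$ it cannot change at all, and $W$ is constant on components (the cases $n=3,4$ need separate, easy treatment). One then compares a tightly packed configuration of $v_1,\dots,v_m$, which has $W=1$, with its reversal, which has $W=m-1\neq 1$, to get at least two components. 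Your triangle observation (three pairwise strangers have frozen cyclic order) is correct, and your verification that the lifted displacement of a stranger pair is path-independent --- the open interval $(kn,(k+1)n)$ contains exactly one representative of each nonzero residue class mod $n$ --- is sound. So: a reasonable plan whose invariants are the right ones, but with the forest-necessity argument needing repair, the gcd-necessity invariant not constructed, and the entire sufficiency direction missing.
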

	
		\begin{lemma}{\rm(\!\!\cite{wang3})}\label{th4}
		Let $2\leq k\leq n$ be an integer and $Y$ be a graph on $n$ vertices. Then the graph $\textup{FS}(\textup{Lollipop}_{n-k,k},Y)$ is connected if and only if $Y$ is $(n-k+1)$-connected.
	\end{lemma}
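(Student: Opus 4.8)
The plan is to tie the threshold $n-k+1$ to the length of the tail of the lollipop, and to treat the two implications separately. Write $L=\textup{Lollipop}_{n-k,k}$, let $v_1,\dots,v_k$ be the clique with $v_1$ the attachment vertex, and let $w_0=v_1,w_1,\dots,w_t$ (with $t=n-k$) be the path, so that $w_t$ is the unique leaf. For the sufficiency direction I would run an induction on the tail length $t$ with $k$ fixed, taking as statement $P(t)$: \emph{for every $Y$ on $k+t$ vertices with $\kappa(Y)\ge t+1$, the graph $\textup{FS}(\textup{Lollipop}_{t,k},Y)$ is connected.} The base case $t=0$ is precisely $L=K_k$ with $Y$ connected, which is exactly Lemma \ref{g}. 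The reason for phrasing the induction on $k+t$ vertices with connectivity $t+1$ is that deleting a single vertex lowers connectivity by at most one, i.e. $\kappa(Y-v)\ge\kappa(Y)-1$, so the hypothesis is preserved when we pass from $Y$ to $Y$ with one token removed.

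For the inductive step ($t\ge 1$), it suffices to connect an arbitrary configuration $\sigma$ to an arbitrary target $\tau$. Let $y^{*}=\tau(w_t)$ be the token that $\tau$ places at the leaf. I would first \emph{park} $y^{*}$ at the leaf $w_t$ (disturbing everything else is harmless), and then \emph{freeze} the leaf: all subsequent moves avoid the edge $w_{t-1}w_t$ and therefore take place entirely in $\textup{Lollipop}_{t-1,k}$ acting on the token set $V(Y)\setminus\{y^{*}\}$ of the graph $Y'=Y-y^{*}$. Since $Y'$ has $k+(t-1)$ vertices and $\kappa(Y')\ge t=(t-1)+1$, the inductive hypothesis $P(t-1)$ gives that $\textup{FS}(\textup{Lollipop}_{t-1,k},Y')$ is connected, so we can finish by matching $\tau$ on the remaining $n-1$ positions. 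Thus the entire content of the step is the parking claim. Here I expect the \textbf{main obstacle}: routing $y^{*}$ down the tail. Since $\kappa(Y)\ge t+1$ forces $\delta(Y)\ge t+1$, on $k+t$ vertices each token misses at most $k-2$ others, so $y^{*}$ is adjacent to all but at most $k-2$ tokens; the plan is to slide $y^{*}$ from position to position toward $w_t$, using this degree bound (and the clique as a buffer/mixing chamber in which tokens can be pre-arranged so that an admissible partner always sits just below $y^{*}$) to guarantee that the swap $w_iw_{i+1}$ is always legal. Making this threading argument uniform over all $t$ positions of the tail is exactly the place where the sharp bound $t+1=n-k+1$ is consumed, and is the delicate technical heart.

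For necessity I would argue the contrapositive: if $\kappa(Y)\le n-k$, then $\textup{FS}(L,Y)$ is disconnected. Take a minimum vertex cut $S$ with $|S|\le n-k$ and let $A,B$ be two sides of $Y-S$, so no edge of $Y$ joins $A$ to $B$. The plan is to build an explicit \emph{blocking} configuration and identify a preserved quantity. The governing principle is that on a path two tokens can never exchange their relative order unless they are adjacent in $Y$ at some moment, and $A$-tokens and $B$-tokens are never adjacent; the only place where tokens can reorder by other means is the clique, whose $k$ positions form a bounded ``sorting chamber''. Because the tail has $n-k\ge|S|$ positions, one can place the separator tokens so that the relative order along the tail of an $A$-token and a $B$-token cannot be scrambled, yielding at least two classes of configurations and hence disconnection. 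Pinning down this order-type invariant precisely (ruling out reorderings mediated by $S$-tokens that briefly enter the clique) is the subtle point of this direction. As a consistency check I would verify the extreme case $k=2$, where $L=\textup{Lollipop}_{n-2,2}\cong P_n$ and the threshold reads $(n-1)$-connected, i.e. $Y\cong K_n$: this recovers the known fact that $\textup{FS}(P_n,Y)$ is connected iff $Y\cong K_n$ (via $\textup{FS}(P_n,K_n)\cong\textup{FS}(K_n,P_n)$ and Lemma \ref{g}), with the invariant reducing to the relative order on the path of any single non-adjacent pair.
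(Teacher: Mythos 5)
First, a point of order: the paper does not prove this statement at all --- it is quoted as Lemma \ref{th4} from \cite{wang3} --- so your attempt can only be measured against that external proof and against the closely related machinery this paper builds (Lemma \ref{zy}, Theorem \ref{mt1}). Your sufficiency skeleton has the right shape: induction on the tail length with base case Lemma \ref{g}, using $\kappa(Y-y^{*})\geq\kappa(Y)-1$ and the identification of the frozen-leaf configurations with $\textup{FS}(\textup{Lollipop}_{t-1,k},Y-y^{*})$ (the paper's Lemma \ref{l63}). But the parking claim, which you correctly identify as the entire content of the inductive step, is left unproved, and the sketch you give for it would not survive being written out. Concretely: (a) the clique is \emph{not} a free mixing chamber --- a swap at two clique positions still requires the two tokens standing there to be adjacent in $Y$, and the induced subgraph of $Y$ on the $k$ tokens currently occupying the clique is only guaranteed minimum degree $1$ (a token with $\delta(Y)\geq t+1$ neighbours may have $t$ of them sitting on the tail), which does not make it connected, so Lemma \ref{g} gives you nothing inside the chamber without shuttling tokens through the tail; (b) more fundamentally, ``pre-arranging tokens so that an admissible partner always sits just below $y^{*}$'' is itself a reachability assertion of exactly the kind being proved, so as stated the induction is circular. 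The known repair is to strengthen the induction hypothesis to a token-placement statement --- from \emph{any} configuration one can bring a prescribed token to a prescribed position --- which is precisely what the paper's Lemma \ref{zy} establishes (even avoiding $s-1$ deleted vertices, with the star base case handled by Lemma \ref{l62}); since $\Delta(\textup{Lollipop}_{n-k,k})=k$ and $\kappa(Y)\geq n-k+1$ give $\Delta(X)+\kappa(Y)\geq n+1$, that lemma, or Theorem \ref{mt1} with $s=2$ (lollipops with $k\geq3$ contain a triangle, and $k=2$ forces $Y\cong K_n$), delivers your whole sufficiency direction --- but the stronger statement must be carried through the induction, not invoked ad hoc for a single token.

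The necessity half has a genuine gap as well: you never define the invariant, and the one you gesture at is not preserved under moves. Placing the $S$-tokens on the tail is a property of one configuration, not of a connected component; along a walk in $\textup{FS}(L,Y)$ the $S$-tokens may swap into the clique and be replaced by $A$- or $B$-tokens, after which ``the relative order along the tail of an $A$-token and a $B$-token'' changes meaning, and nothing in your sketch excludes a long sequence of moves in which $S$-tokens shuttle in and out so as to reorder $A$ against $B$ through the clique. To conclude disconnection you need a function on all of $V(\textup{FS}(L,Y))$ that is constant on connected components and separates two explicit configurations; producing it (and handling cuts of size strictly less than $n-k$, where $S$ does not even fill the tail) is the substance of the necessity proof in \cite{wang3} and is absent here. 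Your $k=2$ consistency check is correct, but it is the degenerate case in which the clique cannot reorder anything at all, so it does not probe the actual difficulty.
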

	
	\begin{lemma}{\rm(\!\!\cite{k})}\label{lem611}
		Let $Y$ be a connected graph of order $n\geq3$. Then the connectivity of connected components of $\textup{FS}(S_n,Y)$ is equal to  the minimum degree of $Y$.
	\end{lemma}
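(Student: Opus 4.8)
The plan is to translate $\textup{FS}(S_n,Y)$ into the classical sliding puzzle on $Y$ and then bound the connectivity from both sides. Since $\textup{FS}(S_n,Y)\cong\textup{FS}(Y,S_n)$, I work with $\textup{FS}(Y,S_n)$: here the center $c$ of $S_n$ is friends with every leaf while the leaves are pairwise strangers, so the only admissible swaps move $c$ along an edge of $Y$. Viewing $c$ as a blank and the $n-1$ leaves as labelled tokens sitting on the remaining vertices of $Y$, a bijection becomes a placement of tokens on $V(Y)$ with one blank vertex, and an edge of $\textup{FS}(Y,S_n)$ is exactly the operation of sliding the token on a vertex adjacent to the blank into the blank. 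Thus $\textup{FS}(S_n,Y)$ is the puzzle graph of $Y$. The first concrete observation is a degree count: from a configuration whose blank occupies a vertex $v\in V(Y)$, the admissible moves are in bijection with the edges of $Y$ at $v$, so the degree of that configuration equals $d_Y(v)$, and hence the minimum degree of $\textup{FS}(S_n,Y)$ is $\delta(Y)$.

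For the upper bound I argue that every connected component $C$ already contains a vertex of degree $\delta(Y)$. Pick any configuration in $C$, let $v$ be its blank vertex, and let $v_0$ be a vertex of $Y$ with $d_Y(v_0)=\delta(Y)$. Because $Y$ is connected there is a $v$-$v_0$ path in $Y$; sliding the blank step by step along this path is a walk in $\textup{FS}(S_n,Y)$ that stays inside $C$ and terminates at a configuration with blank $v_0$, whose degree is $\delta(Y)$. Since the connectivity of a graph is at most its minimum degree, $\kappa(C)\le\delta(Y)$.

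The substance of the lemma is the matching lower bound $\kappa(C)\ge\delta(Y)$ for each component $C$. Writing $d=\delta(Y)$, I would prove this through Menger's theorem: between any two configurations $\sigma,\sigma'\in C$ I want $d$ internally vertex-disjoint paths in $C$. At each end there are at least $d$ outgoing edges, one for every token adjacent to the blank, and the idea is to send the $i$-th path out through the $i$-th of these tokens, route the blank through $Y$ to the neighbourhood of the other blank, and then enter $\sigma'$ through its $i$-th edge, choosing the $d$ routes so that the intermediate configurations never coincide. Keeping the paths disjoint is exactly where Wilson's structure theorem (Theorem \ref{th1}) enters: the analysis of the puzzle group shows that, after steering the blank, one has enough freedom to perform independent local rearrangements (3-cycles of tokens) of the remaining pieces, and this freedom can be spent to pull apart paths that would otherwise collide. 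The bipartite case and the exceptional graphs $C_n,\theta_0$, where $\textup{FS}(S_n,Y)$ splits into two components or into cyclic components (cf. Lemmas \ref{lem32}, \ref{lem34} and \ref{lem33}), would be treated by applying the same construction inside each component after checking that the relevant local moves remain available there.

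The main obstacle is precisely this disjoint-path construction for the lower bound: the upper bound and the reduction to the puzzle graph are routine, but certifying that $d-1$ deleted configurations can never disconnect $C$ requires controlling $d$ simultaneous blank-routings together with their token bookkeeping, and the hardest point is guaranteeing internal disjointness uniformly over all $Y$ (in particular when $\delta(Y)$ is large relative to $n$, so that the blank has little room to manoeuvre and the paths are forced close together). I would isolate this as a separate combinatorial lemma about routing the blank along $d$ edge-disjoint corridors in $Y$ while reserving distinct sets of vertices for the token rearrangements on each path.
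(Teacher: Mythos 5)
Your reduction of $\textup{FS}(S_n,Y)$ to the token-sliding puzzle on $Y$, the observation that a configuration with blank at $v$ has degree $d_Y(v)$, and the upper bound $\kappa(C)\le\delta(Y)$ (slide the blank to a minimum-degree vertex of $Y$ inside the component $C$) are all correct, and they match the standard setup. But note that the paper contains no proof of this lemma at all: it is quoted verbatim from \cite{k}, so the only thing to compare against is whether your argument is complete in itself. It is not. The entire content of the lemma is the lower bound $\kappa(C)\ge\delta(Y)$, and for that you offer a plan rather than a proof: the construction of $\delta(Y)$ internally vertex-disjoint paths between arbitrary $\sigma,\sigma'\in C$ is explicitly deferred to an unproven ``separate combinatorial lemma,'' and your own closing paragraph concedes that the disjointness bookkeeping is the open obstacle. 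A proof proposal whose hard step is named but not carried out has a genuine gap, and here the gap coincides with the theorem.

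Moreover, the specific route you sketch would fail as formulated. First, Wilson's theorem (Theorem \ref{th1}) and the cyclic-order invariant only identify \emph{which} configurations lie in a common component; they give no quantitative control for keeping $d=\delta(Y)$ blank-routings apart, so ``this freedom can be spent to pull apart paths'' is an aspiration, not an argument. Second, the isolated lemma you propose --- routing the blank along $d$ \emph{edge-disjoint corridors in $Y$} --- is the wrong invariant in both directions. The hypothesis is only that $Y$ is connected, so $\delta(Y)$ can exceed $\kappa(Y)$: take $Y$ to be two copies of $K_{d+1}$ glued at a single vertex, where $\delta(Y)=d\ge 2$ but every blank trajectory between the two sides passes through the one cut vertex of $Y$, and no two edge-disjoint corridors exist; yet the lemma still asserts each component is $d$-connected. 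Hence disjointness must be certified at the level of whole configurations (the token positions distinguish paths even when the blank trajectories coincide), not at the level of trajectories in $Y$. Conversely, when $\delta(Y)$ is close to $n-1$ the $d$ trajectories are forced to overlap heavily, so edge-disjointness is unavailable even in the well-connected regime. What is missing, concretely, is the mechanism --- present in \cite{k} but absent here --- for perturbing the non-blank tokens along each candidate path so that any prescribed set of $d-1$ forbidden configurations is avoided; without it the Menger argument does not get off the ground.
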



	\begin{lemma}{\rm(\!\!\cite{bd})}\label{lem5}
		Let $X (\not\cong K_n)$ be a graph of order $n$. Then $\kappa(X)\geq 2\delta(X)+2-n$.
	\end{lemma}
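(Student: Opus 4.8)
The plan is to argue directly from the definition of vertex connectivity by exhibiting a minimum separating set and then bounding from below the sizes of the components it leaves behind. The point of the degree hypothesis is that a vertex sitting in a small component would be forced to have low degree, so large $\delta(X)$ compels the components to be large, which in turn forces the cut to be large.

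First I would invoke the assumption $X\not\cong K_n$ to guarantee that a genuine vertex cut exists: since $X$ is not complete it has two non-adjacent vertices, so $\kappa(X)\leq n-2$ and there is a set $S\subseteq V(X)$ with $|S|=\kappa(X)=:\kappa$ whose removal disconnects $X$. Let $C_1$ and $C_2$ denote the vertex sets of two distinct components of $X-S$.

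The key step is a local degree estimate inside each component. Fix $v\in C_i$ for $i\in\{1,2\}$. Because $C_i$ is a connected component of $X-S$, every neighbour of $v$ lies in $C_i\cup S$, so that
\[
\delta(X)\leq d_X(v)\leq (|C_i|-1)+\kappa,
\]
which rearranges to $|C_i|\geq \delta(X)-\kappa+1$. Applying this to both components yields $|C_1|,|C_2|\geq \delta(X)-\kappa+1$. I would then combine these with the trivial count $n\geq |S|+|C_1|+|C_2|$ to obtain
\[
n\;\geq\;\kappa+2\bigl(\delta(X)-\kappa+1\bigr)\;=\;2\delta(X)-\kappa+2,
\]
and rearranging gives exactly $\kappa(X)\geq 2\delta(X)+2-n$.

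I do not expect a serious obstacle. The only points needing care are the bookkeeping in the degree bound — correctly subtracting the vertex $v$ itself and not double-counting $S$ — and the observation that the hypothesis $X\not\cong K_n$ is genuinely required so that a separating set $S$ exists at all; indeed for $K_n$ one has $2\delta+2-n=n>n-1=\kappa$, so the inequality fails precisely in the excluded case.
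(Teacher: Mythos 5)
Your proof is correct: the minimum-cut-plus-component-size count is exactly the classical argument for this bound, and you handle the two delicate points (the degree estimate $\delta(X)\leq |C_i|-1+\kappa$ and the necessity of $X\not\cong K_n$) properly. The paper itself gives no proof, citing the result from Chartrand and Harary, and your argument is the standard proof of that cited bound, so there is nothing to reconcile.
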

	
	\section{\textbf{The proof of Theorem \ref{mt21} and Theorem \ref{mt1} }}
	
	\hspace*{6mm} Let $X$ and $Y$ be two graphs of order $n$, $X'$ and $Y'$ be the induced subgraphs  with order $m (< n)$ of $X$ and $Y$, respectively, $V(X')=V(X)\setminus	\{x_1,x_2,\cdots,x_{n-m}\}$,  $V(Y')=V(Y)\setminus	\{y_1,y_2,\cdots,y_{n-m}\}$. We define a bijection $\phi$  from $\{x_1,x_2,\cdots,x_{n-m}\}$ to $\{y_1,y_2,\cdots,y_{n-m}\}$. Clearly, there are $(n-m)!$ different $\phi$'s, denoted by $\phi_1,\phi_2,\cdots,\phi_{(n-m)!}$. For $1\leq i\leq (n-m)!$, we define $V_{\phi_i}$  to be a subset of $V(\textup{FS}(X, Y))$ such that for any $\sigma\in V_{\phi_i}$,  $\sigma(x_j)=\phi_i(x_j)$ for any $1\leq j\leq n-m$. Clearly, we have $V_{\phi_i}=V(\textup{FS}(X, Y)[V_{\phi_i}])$ and  $\bigcup\limits_{i=1}^{(n-m)!}V_{\phi_i}\subset V(\textup{FS}(X, Y))$. Moreover,  for any $\tau\in V(\textup{FS}(X, Y))$, we denote  $\tau|_{X'}$ as the restriction of the mapping $\tau$ to $V(X')$; for any $\varphi\in V(\textup{FS}(X', Y'))$, we denote $\varphi|^{X}$ as the extension of the mapping $\varphi$ to $V(X)$. In fact, there are $(n-m)!$ different extensions of the mapping $\varphi$,  we denote  $\varphi|_i^X$  as the extension corresponding to $\phi_i$, say, $\varphi|_i^X(x_j)=\phi_i(x_j)$ for $1\leq j\leq n-m$, where $1\leq i\leq (n-m)!$. Specially, if $n-m=1$, then  $\varphi|^{X}=\varphi|_1^{X}$ is the unique extension of the mapping $\varphi$.
	
	For any subset $V_1$ of $ V(X)$, denote by $X-V_1$ the induced subgraph of $X$ on the vertex set $V(X)\backslash V_1$ and denote by $X[V_1]$ the induced subgraph of $X$ on the vertex set  $V_1$. Specially, if $V_1=\{x_0\}$, we denote by $X-x_0$ the induced subgraph of $X$ on the vertex set $V(X)\backslash \{x_0\}$.

	\begin{lemma}\label{l63}
		Let $X$ and $Y$ be two graphs of order $n$, $X'$ and $Y'$ be the induced subgraphs  with order $m (\leq n)$ of $X$ and $Y$, respectively. Then $\textup{FS}(X, Y)$ contains $(n-m)!$ vertex-disjoint copies of $\textup{FS}(X', Y')$ as subgraphs.
	\end{lemma}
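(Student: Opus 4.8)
The plan is to realize the $(n-m)!$ copies explicitly as the induced subgraphs $\textup{FS}(X,Y)[V_{\phi_i}]$, one for each bijection $\phi_i$ of the deleted vertices. So I would prove two things: first that the vertex sets $V_{\phi_1},\dots,V_{\phi_{(n-m)!}}$ are pairwise disjoint, and second that each $\textup{FS}(X,Y)[V_{\phi_i}]$ is isomorphic to $\textup{FS}(X',Y')$. Disjointness is immediate: if $\sigma\in V_{\phi_i}\cap V_{\phi_j}$, then $\phi_i(x_k)=\sigma(x_k)=\phi_j(x_k)$ for every $1\le k\le n-m$, whence $\phi_i=\phi_j$ and $i=j$.

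For the isomorphism, fix $i$ and take the restriction map $\sigma\mapsto\sigma|_{X'}$ as the candidate map from $\textup{FS}(X,Y)[V_{\phi_i}]$ to $\textup{FS}(X',Y')$, with inverse the extension $\varphi\mapsto\varphi|_i^X$. Because every $\sigma\in V_{\phi_i}$ agrees with $\phi_i$ on the deleted vertices $\{x_1,\dots,x_{n-m}\}$, it must carry $V(X')$ bijectively onto $V(Y')$; hence $\sigma|_{X'}$ is a genuine vertex of $\textup{FS}(X',Y')$ and the restriction/extension pair is a bijection between the two vertex sets.

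The core of the argument is the adjacency equivalence. Suppose $\sigma,\sigma'\in V_{\phi_i}$ are adjacent in $\textup{FS}(X,Y)$, realized by swapping the images of some $a,b\in V(X)$ with $ab\in E(X)$ and $\sigma(a)\sigma(b)\in E(Y)$. Since $\sigma$ and $\sigma'$ coincide on every deleted vertex, the swap cannot move any $x_k$, so $a,b\in V(X')$ and therefore $\sigma(a),\sigma(b)\in V(Y')$. Here I would invoke that $X'$ and $Y'$ are \emph{induced} subgraphs: $ab\in E(X)$ with $a,b\in V(X')$ yields $ab\in E(X')$, and $\sigma(a)\sigma(b)\in E(Y)$ with both endpoints in $V(Y')$ yields $\sigma(a)\sigma(b)\in E(Y')$, so the same swap witnesses adjacency of $\sigma|_{X'}$ and $\sigma'|_{X'}$ in $\textup{FS}(X',Y')$. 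The converse runs the computation backwards: an admissible swap in $\textup{FS}(X',Y')$ uses an edge of $X'\subseteq X$ and an edge of $Y'\subseteq Y$, so it lifts to an adjacency of the extensions in $\textup{FS}(X,Y)$, and since it never touches the deleted vertices the two extensions remain in $V_{\phi_i}$. This gives $\textup{FS}(X,Y)[V_{\phi_i}]\cong\textup{FS}(X',Y')$.

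I expect the only point needing care, rather than genuine difficulty, to be the observation that a single swap connecting two elements of $V_{\phi_i}$ cannot move any deleted vertex; this is precisely where the common agreement with $\phi_i$ on $\{x_1,\dots,x_{n-m}\}$ is used, and it is what allows the induced-subgraph property to translate edges of $X$ and $Y$ into edges of $X'$ and $Y'$ and back. Once this is established, adjacency is preserved in both directions, and combining the isomorphisms over all $i$ with the disjointness of the $V_{\phi_i}$ produces the required $(n-m)!$ vertex-disjoint copies of $\textup{FS}(X',Y')$ inside $\textup{FS}(X,Y)$.
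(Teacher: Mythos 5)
Your proposal is correct and follows essentially the same route as the paper: both identify the copies as the induced subgraphs $\textup{FS}(X,Y)[V_{\phi_i}]$, exhibit the restriction/extension pair as the isomorphism, and hinge the adjacency equivalence on the observation that a swap joining two elements of $V_{\phi_i}$ cannot move a deleted vertex (since the two bijections already agree there), with the induced-subgraph hypothesis translating edges in both directions. Your explicit verification that the sets $V_{\phi_i}$ are pairwise disjoint is a small addition the paper leaves implicit, but it does not change the argument.
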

	\begin{proof}
		Firstly, we have $|V(\textup{FS}(X', Y'))|=|V(\textup{FS}(X, Y)[V_{\phi_i}])|$ for any $i\in\{1,2,\cdots,(n-m)!\}$, where $V_{\phi_i}$ is defined as above. 
		
		Let $f$ be a mapping from $\textup{FS}(X', Y')$ to $ \textup{FS}(X, Y)[V_{\phi_i}]$ for given $i\in\{1,2,\cdots,(n-m)!\}$ such that $f(\varphi)=\sigma$, where $\varphi\in V(\textup{FS}(X', Y'))$ and  $\sigma=\varphi|_i^X\in V_{\phi_i}$. It is easy to check that $f$ is a bijection. Now we show the map $f$ preserves the structure of $\textup{FS}(X', Y')$.
		
		For any $\varphi_1, \varphi_2\in V(\textup{FS}(X', Y'))$, there must exist $\sigma_1, \sigma_2\in V_{\phi_i}$ such that $\sigma_1=\varphi_1|_i^X, \sigma_2=\varphi_2|_i^X$ and $\sigma_1(x_j)=\sigma_2(x_j)$ for any $j\in \{1,\cdots,n-m\}$. If $\varphi_1 \varphi_2\in E(\textup{FS}(X', Y'))$, say, there exist $a,b\in V(X')$ satisfying  $ab\in E(X')$,  $\varphi_1(a)\varphi_1(b)\in E(Y')$, and $\varphi_1(a)=\varphi_2(b)$, $\varphi_1(b)=\varphi_2(a)$, and $\varphi_1(c)=\varphi_2(c)$ for any $c\in V(X')\setminus \{a,b\}$ by Definition \ref{d1}, then $a,b\in V(X')\subset V(X)$ satisfying $ab\in E(X')\subset E(X)$, $\sigma_1(a)\sigma_1(b)\in E(Y)$, and $\sigma_1(a)=\sigma_2(b)$, $\sigma_1(b)=\sigma_2(a)$, and $\sigma_1(d)=\sigma_2(d)$ for any $d\in V(X)\setminus \{a,b\}$ by $\sigma_1, \sigma_2\in V_{\phi_i}$, which implies $f(\varphi_1)f(\varphi_2)=\sigma_1 \sigma_2\in E(\textup{FS}(X, Y)[V_{\phi_i}])$.
		
		On the other hand, for any $\sigma_1 \sigma_2\in E(\textup{FS}(X, Y)[V_{\phi_i}])$, where $\sigma_1,  \sigma_2\in V_{\phi_i}$, we have $\sigma_1(x_j)=\sigma_2(x_j)$ for any $j\in\{1,\cdots,n-m\}$. By Definition \ref{d1}, there exist $a,b\in V(X)$ satisfying  $ab\in E(X)$, $\sigma_1(a)\sigma_1(b)\in E(Y)$, and $\sigma_1(a)=\sigma_2(b)$, $\sigma_1(b)=\sigma_2(a)$, and $\sigma_1(d)=\sigma_2(d)$ for any $d\in V(X)\backslash\{a,b\}$. Clearly,  $a,b\notin \{x_1,\cdots,x_{n-m}\}$ by $\sigma_1(a)\neq \sigma_2(a)$ and  $\sigma_1(b)\neq \sigma_2(b)$, say, $a,b\in V(X')$. Thus we have $ab\in E(X')$, $\sigma_k|_{X'}(a)=\sigma_k(a)\in Y'$, $\sigma_k|_{X'}(b)=\sigma_k(b)\in Y'$ for $k\in\{1,2\}$, $\sigma_k|_{X'}(a)\sigma_k|_{X'}(b)=\sigma_k(a)\sigma_k(b)\in E(Y')$ for $k\in\{1,2\}$,  $\sigma_1|_{X'}(a)=\sigma_2|_{X'}(b)$, $\sigma_1|_{X'}(b)=\sigma_2|_{X'}(a)$, and $\sigma_1|_{X'}(c)=\sigma_2|_{X'}(c)$ for any $c\in V(X')\backslash\{a,b\}$. Therefore, $f^{-1}(\sigma_1)f^{-1}(\sigma_2)=\sigma_1|_{X'}\sigma_2|_{X'}\in E(\textup{FS}(X', Y'))$.

	Combining the above arguments,  $\textup{FS}(X', Y')  \cong \textup{FS}(X, Y)[ V_{\phi_i}]$ for any $i\in\{1,2,\cdots,(n-m)!\}$, and then the result  holds.
	\end{proof}
		
	For any graph $X$ and $a,b\in V(X)$, we use $(a\ b)$ to denote the bijection $V(X)\rightarrow V(X)$ such that $(a\ b)(a)=b$, $(a\ b)(b)=a$ and $(a\ b)(c)=c$ for any $c\in V(X)\setminus \{a,b\}$.  
	
	For each vertex $\sigma$ of $\textup{FS}(S_n, C_n)$, we can read off the leaves of $S_n$ in the clockwise order that their images under $\sigma$ appear around $C_n$. In   \cite{dc2}, the authors  defined the content read off above as a cyclic ordering of the set of leaves of $S_n$, and obtained the following conclusion.
	
	\begin{lemma}{\rm(\!\!\cite{dc2})}\label{lem7}
		For any $\sigma,\tau\in V(\textup{FS}(S_n, C_n))$, $\sigma,\tau$ are in the same connected component of $\textup{FS}(S_n, C_n)$ if and only if they induce the same cyclic ordering of the leaves of $S_n$.
	\end{lemma}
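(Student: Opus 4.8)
The plan is to reinterpret $\textup{FS}(S_n,C_n)$ as a single-hole token-sliding puzzle on the cycle and to prove that the cyclic ordering is a \emph{complete} invariant by combining an invariance argument with an exact count. Since the only edges of $S_n$ join the center $c$ to a leaf, Definition \ref{d1} forces every edge of $\textup{FS}(S_n,C_n)$ to swap $\sigma(c)$ with the image of some leaf, where those two images must be adjacent on $C_n$. Regarding the vertices of $C_n$ as fixed positions around an oriented circle, I would view $\sigma(c)$ as a single movable hole and the $n-1$ leaf-images as labeled tokens occupying the remaining positions; a move slides the hole to an adjacent position and pushes the token sitting there into the vacated spot. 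For a configuration $\sigma$ with the hole at position $p$, let $s(\sigma)=(s_1,\dots,s_{n-1})$ record the leaves read clockwise starting just after $p$; the cyclic ordering of the leaves is then precisely the cyclic equivalence class of $s(\sigma)$.

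For the ``only if'' direction I would establish invariance by a one-line computation. With the hole at $p$, a single move either swaps with the token at $p+1$, turning $s=(s_1,\dots,s_{n-1})$ into its left cyclic shift $(s_2,\dots,s_{n-1},s_1)$ while the hole advances to $p+1$, or swaps with the token at $p-1$, turning $s$ into its right cyclic shift while the hole retreats to $p-1$. In either case $s(\sigma)$ is replaced by a cyclic shift of itself, so the cyclic ordering of the leaves is unchanged along every edge and is therefore constant on each connected component.

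For the ``if'' direction I would prove reachability together with a matching count. Fix a cyclic ordering, and encode each configuration realizing it by the pair (hole position in $\mathbb{Z}_n$, cyclic shift of the reading sequence in $\mathbb{Z}_{n-1}$) relative to a fixed reference; by the previous paragraph a forward hole-step adds $(+1,+1)$ to this pair and a backward step adds $(-1,-1)$. Since $\gcd(n,n-1)=1$, the Chinese Remainder Theorem shows that the subgroup generated by $(1,1)$ is all of $\mathbb{Z}_n\times\mathbb{Z}_{n-1}$, so sliding the hole around the cycle the appropriate number of times realizes every pair and hence reaches all $n(n-1)$ configurations carrying that cyclic ordering from any one of them. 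A direct count then closes the argument: there are $n!$ bijections in total and $(n-2)!$ distinct cyclic orderings of the $n-1$ leaves, giving exactly $n!/(n-2)!=n(n-1)$ configurations per ordering; thus each connected component is precisely the set of configurations sharing a given cyclic ordering, which yields both directions simultaneously. The main obstacle is this reachability step: one must verify that the hole-position and shift coordinates are genuinely coupled as $(\pm1,\pm1)$ and that the CRT enumeration exhausts all $n(n-1)$ configurations, since only this exact match with the count rules out a finer invariant and confirms that the cyclic ordering alone determines the component.
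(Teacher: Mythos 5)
Your proof is correct, and every step checks out: the only edges of $\textup{FS}(S_n,C_n)$ do swap the image of the center with an adjacent leaf-image, a forward (resp.\ backward) hole move does replace the reading word by its left (resp.\ right) cyclic shift, and for a fixed cyclic ordering the pair (hole position, rotation index) in $\mathbb{Z}_n\times\mathbb{Z}_{n-1}$ is a genuine bijective encoding of the $n(n-1)$ configurations, so the orbit of $(+1,+1)$ being all of $\mathbb{Z}_n\times\mathbb{Z}_{n-1}$ by $\gcd(n,n-1)=1$ gives reachability; note that the final count is then not actually needed, since invariance already gives the inclusion of each component in an ordering class and reachability gives the reverse inclusion, though it serves as a pleasant consistency check. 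For comparison: the paper does not prove this statement at all --- it is quoted as Lemma \ref{lem7} from \cite{dc2} --- so you have supplied a complete self-contained argument where the paper relies on a citation. Your route (single-hole token sliding plus the coprimality of hole period $n$ and rotation period $n-1$) is essentially the classical analysis of puzzles on cycles going back to Wilson's work \cite{w}, and it is the natural proof of the cited result; it also yields slightly more than the statement, namely that each component contains a Hamiltonian-type cycle traced by pushing the hole in one direction $n(n-1)$ times, and it makes transparent why the cyclic ordering is a complete invariant rather than merely an invariant.
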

	
	\begin{corollary}\label{l65}
		Let $\sigma\in V(\textup{FS}(S_n, C_n))$, $n\geq3$, and fix $x\in V(S_n)$, $y\in V(C_n)$. Then  after removing any vertex $\sigma_0 (\neq \sigma)$ of $\textup{FS}(S_n, C_n)$, there exists a vertex $\sigma'$ in the same connected component of $\textup{FS}(S_n, C_n)-\sigma_0$ as $\sigma$ such that $\sigma'(x)=y$.
	\end{corollary}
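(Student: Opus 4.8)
The plan is to exploit the very rigid structure of $\textup{FS}(S_n,C_n)$. Write $o$ for the center of $S_n$; for a bijection $\tau$ the vertex $\tau(o)\in V(C_n)$ plays the role of an ``empty position'' (a hole), while the images of the leaves are ``tiles''. By Definition \ref{d1}, every edge of $\textup{FS}(S_n,C_n)$ incident to $\tau$ must use an edge $o\ell$ of $S_n$ together with an edge $\tau(o)\tau(\ell)$ of $C_n$; hence a move simply slides the hole to one of the two $C_n$-neighbours of $\tau(o)$. Thus $\textup{FS}(S_n,C_n)$ is $2$-regular. By Lemma \ref{lem7} its connected components are exactly the classes of bijections inducing a fixed cyclic ordering of the leaves, so each component is a connected $2$-regular graph, i.e. a single cycle; a count of $n!/(n-2)!=n(n-1)$ (there being $(n-2)!$ cyclic orderings) shows each component $\mathcal{C}$ is a cycle on $n(n-1)$ vertices, traced out by repeatedly sliding the hole around $C_n$. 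The first step is to record this structural description precisely.

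The core step is then to count, inside the component $\mathcal{C}$ containing $\sigma$, the bijections $\tau$ with $\tau(x)=y$; I will show there are at least two (in fact exactly $n-1$). If $x=o$, this is transparent: as the hole slides once around $C_n$ it occupies $y$ exactly once, and traversing all of $\mathcal{C}$ amounts to $n-1$ such loops, giving $n-1\ge 2$ configurations with the hole at $y$. If $x$ is a leaf, I parametrise $\mathcal{C}$ by the step count $k\in\mathbb{Z}_{n(n-1)}$ along the cycle, so that the hole sits at $h_0+k\pmod n$ while the tiles are rotated by $k\pmod{n-1}$; writing the position of tile $x$ as an explicit function of $k$ and using $\gcd(n,n-1)=1$ (the Chinese remainder theorem lets one treat $k\bmod n$ and $k\bmod(n-1)$ independently), one checks that tile $x$ lands on $y$ for exactly $n-1$ values of $k$. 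A more conceptual alternative is to use the $S_{n-2}$-action that relabels the leaves other than $x$: it permutes the components simply transitively while preserving the condition $\tau(x)=y$, so each of the $(n-2)!$ components carries the same number $(n-1)!/(n-2)!=n-1$ of such bijections.

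With the count in hand the conclusion is immediate. Since $\mathcal{C}$ is a cycle, deleting the single vertex $\sigma_0$ leaves $\mathcal{C}\setminus\{\sigma_0\}$ connected (a path if $\sigma_0\in\mathcal{C}$, and all of $\mathcal{C}$ if $\sigma_0\notin\mathcal{C}$), and because $\sigma\neq\sigma_0$ this set lies in the connected component of $\sigma$ in $\textup{FS}(S_n,C_n)-\sigma_0$. Among the (at least two) bijections of $\mathcal{C}$ mapping $x$ to $y$, at most one equals $\sigma_0$, so at least one, call it $\sigma'$, survives; it lies in $\sigma$'s component of $\textup{FS}(S_n,C_n)-\sigma_0$ and satisfies $\sigma'(x)=y$, as required.

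The main obstacle is the leaf case of the counting step: one must control the position of a single tile while the hole circulates, and the coupling between the hole's position (mod $n$) and the induced rotation of the tiles (mod $n-1$) is exactly what makes a naive ``once per loop'' count fail. Handling this coupling correctly — either through the coprimality bookkeeping or through the leaf-relabelling symmetry — is the crux; everything else reduces to the observations that each component is a cycle and that a cycle stays connected after deleting one vertex.
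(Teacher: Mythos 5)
Your proof is correct, and its overall skeleton matches the paper's: show each connected component of $\textup{FS}(S_n,C_n)$ survives the deletion of one vertex, show each component contains $n-1\geq 2$ bijections sending $x$ to $y$, and conclude that at least one witness remains joined to $\sigma$. The difference lies in how the two ingredients are obtained. The paper simply quotes Lemma \ref{lem611} (components of $\textup{FS}(S_n,Y)$ have connectivity $\delta(Y)=2$) for the first step, and reads the count of $n-1$ witnesses per component off Lemma \ref{lem7}; you instead prove the first step from scratch by observing that every edge of $\textup{FS}(S_n,C_n)$ slides the ``hole'' $\tau(o)$ to one of its two $C_n$-neighbours, so the graph is $2$-regular and each component is a cycle (of length $n(n-1)$), which is visibly still connected after removing one vertex. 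This is more elementary and self-contained than invoking the Krishnan--Li connectivity result, at the cost of being special to $Y=C_n$ (which is all the corollary needs). For the counting step, your second argument --- the simply transitive action of the $(n-2)!$ relabellings of the leaves other than $x$ on components, preserving the condition $\tau(x)=y$, giving $(n-1)!/(n-2)!=n-1$ witnesses per component --- is clean and complete; the first, CRT-flavoured parametrisation is stated a bit loosely (a given tile does not move uniformly with $k$; it only advances when the hole passes over it, so ``rotated by $k \pmod{n-1}$'' describes full loops rather than intermediate steps), but since you only use it as an alternative to the symmetry argument, the proof as a whole stands. Note also that the count $n-1$ per component can be obtained directly, as the paper implicitly does: within a fixed cyclic ordering, placing the image of $x$ at $y$ leaves exactly $n-1$ choices for the hole's position, after which the configuration is determined.
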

	
	\begin{proof}
		By Lemma \ref{lem611}, we have that the  connected components of $\textup{FS}(S_n,C_n)$ are $2$-connected. Moreover, by Lemma \ref{lem7}, each connected component of $\textup{FS}(S_n, C_n)$ has $n-1$ vertices $\sigma_1, \sigma_2,\cdots,\sigma_{n-1}$ such that $\sigma_1(x)=\sigma_2(x)=\cdots=\sigma_{n-1}(x)=y$. Thus each connected component of $\textup{FS}(S_n, C_n)-\sigma_0$ has at least $n-2 \ (\geq1)$ vertices $\sigma'_1, \sigma'_2,\cdots,\sigma'_{n-2}$ such that $\sigma'_1(x)=\sigma'_2(x)=\cdots=\sigma'_{n-2}(x)=y$. Therefore, the conclusion  holds.
	\end{proof}
	\begin{lemma}\label{l62}
		Let $s\geq2$ be an integer, $Y$ be a $s$-connected graph of order $n(\geq3)$, $V$ be any subset of $ V(\textup{FS}(S_n, Y))$ with  $|V|=s-1$, $\sigma\in V(\textup{FS}(S_n, Y))\backslash V$,  and fix $x\in V(S_n)$ and $y\in V(Y)$. Then  after removing the $ s-1$ vertices in $V$, there exists a vertex $\sigma'$ in the same connected component of $\textup{FS}(S_n, Y)-V$ as $\sigma$ such that $\sigma'(x)=y$.
	\end{lemma}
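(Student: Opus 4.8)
The plan is to first exploit the $s$-connectivity of the components of $\textup{FS}(S_n,Y)$ to reduce the statement to a counting problem, and then to produce enough configurations placing $y$ at $x$. Let $\mathcal{C}$ denote the connected component of $\textup{FS}(S_n,Y)$ containing $\sigma$. Since $Y$ is $s$-connected we have $\delta(Y)\ge\kappa(Y)\ge s$, so $Y$ is connected; by Lemma \ref{lem611} every connected component of $\textup{FS}(S_n,Y)$ has connectivity $\delta(Y)\ge s$, i.e. is $s$-connected. As $|V|=s-1$, the graph obtained from $\mathcal{C}$ by deleting the (at most $s-1$) vertices of $V$ lying in it is still connected and contains $\sigma$. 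Hence it suffices to prove that $\mathcal{C}$ contains at least $s$ bijections $\tau$ with $\tau(x)=y$: any such $\tau\notin V$ then lies in the same component of $\textup{FS}(S_n,Y)-V$ as $\sigma$, and at least one of them survives the deletion of the $s-1$ vertices of $V$. Writing $R=\{\tau\in V(\mathcal{C}):\tau(x)=y\}$, the goal is $|R|\ge s$.

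When $x$ is a leaf of $S_n$, I would identify $R$ with a component of a smaller friends-and-strangers graph. Deleting the leaf $x$ from $S_n$ leaves the star $S_{n-1}$, and by Lemma \ref{l63} (applied with $m=n-1$ and the unique extension sending $x\mapsto y$) the set of all bijections sending $x$ to $y$, together with the swaps that do not involve $x$, forms an induced copy of $\textup{FS}(S_{n-1},Y-y)$ inside $\textup{FS}(S_n,Y)$; every edge of $\textup{FS}(S_n,Y)$ within this set necessarily keeps $x\mapsto y$, so each connected component of $\textup{FS}(S_{n-1},Y-y)$ lies inside a single component of $\textup{FS}(S_n,Y)$. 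Since $Y$ is $2$-connected, $Y-y$ is connected, and $\delta(Y-y)\ge\delta(Y)-1\ge s-1$; hence by Lemma \ref{lem611} every component of $\textup{FS}(S_{n-1},Y-y)$ is $(s-1)$-connected and therefore has at least $s$ vertices. As $\mathcal{C}$ meets this copy (from $\sigma$ one can first bring $y$ to the centre and then slide a neighbour of $y$ onto the leaf $x$), $\mathcal{C}$ contains an entire such component, giving $|R|\ge s$.

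When $x=c$ is the centre of $S_n$, this reduction degenerates, since deleting $c$ destroys all edges of the star, so I would argue directly. First realise a walk in $Y$ from $\sigma(c)$ to $y$ by successive swaps at the centre to reach a configuration $\tau_0\in\mathcal{C}$ with $\tau_0(c)=y$. Using the $s$-connectivity of $Y$ together with $n\ge s+1$ (forced by $\kappa(Y)\ge s$), fix a cycle $C$ through $y$ in $Y$ of length $g\ge\min\{n,2s\}\ge s+1$. Traversing $C$ once, starting and ending with $y$ at the centre, performs a fixed cyclic permutation of order $g-1$ of the vertices of $C-y$ among the $g-1$ leaves carrying them, while returning $y$ to the centre; iterating this rotation $0,1,\dots,g-2$ times produces $g-1\ge s$ distinct bijections in $\mathcal{C}$, all sending $c$ to $y$, whence $|R|\ge s$.

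The main obstacle is the centre case. Unlike the leaf case it admits no clean reduction, and the count rests on two technical points: securing a cycle through the prescribed vertex $y$ of length at least $s+1$ (a Dirac-type circumference bound through a fixed vertex, for which $s$-connectivity and $n\ge s+1$ are exactly what is needed), and checking that the cyclic rotation along $C$ is realisable by legal swaps and that its iterates are genuinely distinct configurations. The small boundary cases excluded from Lemma \ref{lem611} (when $n-1<3$, forcing $n=3$, $s=2$ and $Y\cong C_3$) would be verified directly.
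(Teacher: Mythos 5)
Your reduction is sound and matches the paper's in spirit: components of $\textup{FS}(S_n,Y)$ are $s$-connected by Lemma \ref{lem611}, so it suffices to show the component $\mathcal{C}$ of $\sigma$ contains at least $s$ bijections sending $x$ to $y$. Your centre case is also essentially correct and self-contained: the walk bringing $y$ to the centre, the rotation along a cycle $C$ through $y$, and the count $g-1\ge s$ all check out (though the cited ``Dirac-type bound through a fixed vertex'' of length $\min\{n,2s\}$ is not the standard Dirac statement; you only need $g\ge s+1$, which follows from a fan-lemma argument, and you do flag this). The genuine gap is in the leaf case, at the step ``$\mathcal{C}$ meets this copy.'' The parenthetical maneuver is not executable: once $y$ occupies the centre of $S_n$, \emph{every} legal move involves the centre, so you cannot ``then slide a neighbour of $y$ onto the leaf $x$'' while keeping $y$ at the centre; and swapping $y$ directly with the occupant $w$ of $x$ requires $yw\in E(Y)$, which is not guaranteed. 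More importantly, the claim that every component of $\textup{FS}(S_n,Y)$ contains some $\tau$ with $\tau(x)=y$ is precisely the crux of the lemma, not a routine observation: for $Y\cong\theta_0$ the graph $\textup{FS}(S_7,\theta_0)$ has several components, and for bipartite $Y$ it has two, so the assertion needs a real argument in each case. The paper supplies exactly this via the global classification --- Wilson's Theorem \ref{th1} for the generic non-bipartite case, the cyclic-ordering invariant (Lemma \ref{lem7}, Corollary \ref{l65}) for $Y\cong C_n$, a hand check for $\theta_0$, and the parity invariant of Lemma \ref{lem34} combined with leaf transpositions $\tau\circ(i\ j)$ to place more than $s$ such vertices in \emph{each} of the two components when $Y$ is bipartite. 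Your proof avoids all of this machinery, which is why the reachability claim cannot be waved through.

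The gap is localized and, interestingly, repairable by your own centre-case technique: after bringing $y$ to the centre to reach $\tau_0\in\mathcal{C}$, choose the cycle $C$ in $Y$ through \emph{both} $y$ and the current occupant $w=\tau_0(x)$ (any two vertices of a $2$-connected graph lie on a common cycle); then $x$ lies on the rotation orbit, so some iterate places a $Y$-neighbour of $y$ at $x$, and one further swap puts $y$ at $x$. With that repair, your argument becomes a uniform, classification-free proof that trades the paper's case analysis over the known component structure of $\textup{FS}(S_n,Y)$ for explicit token-sliding constructions; as submitted, however, the leaf case rests on an unproved (and incorrectly sketched) reachability claim, so the proof is incomplete.
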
 
	
	\begin{proof}
		If $Y\cong C_n$, then the result holds by Corollary \ref{l65}. 
		
		If $Y\cong \theta_0$, then we can check by hand that the desired result holds. 
		
		 If $Y$ is non-bipartite and  $Y\not\in\{ C_n,\theta_0\}$, then $\textup{FS}(S_n, Y)$ is $s$-connected by $Y$ is  $s$-connected, Theorem \ref{th1} and Lemma \ref{lem611}, and thus $\textup{FS}(S_n, Y)-V$ is still a connected graph. On the other hand, there are $(n-1)!$ different vertex $\sigma'$ in $V(\textup{FS}(S_n, Y))$ such that $\sigma'(x)=y$, and the desired result holds by $(n-1)!\geq s$.
		
		If $Y$ is  bipartite and $Y\not\cong C_n$, now we show the desired result holds. Since $Y$ is a $s$-connected bipartite graph and $Y\not\cong C_4$, we have $n\geq 5$, $s\leq \delta(Y)\leq \frac{n}{2}$. Let $V(S_n)=V(Y)=\{1,2,\cdots,n\}$, $\{A_Y, B_Y\}$ be a bipartition of $Y$, $n$ be the center of $S_n$. Then $\{\{1,2,\cdots,n-1\}, \{n\}\}$ is a bipartition of $S_n$, denote by $A_X=\{1,2,\cdots,n-1\}$, $B_X=\{n\}$.
		
		 For each $\varphi\in V(\textup{FS}(S_n, Y))$,  $g(\varphi)=|\varphi(A_X)\cap A_Y|+\frac{sgn(\varphi)+1}{2}$ defined as  Lemma \ref{lem34}. 
			For any $i,j\in \{1,2,\cdots,n-1\} \setminus\{x\}$, let $\tau$ be a vertex of $\textup{FS}(S_n, Y)$ such that $\tau(x)=y$, and let $\tau_{ij}=\tau\circ (i\ j)$. Then $\tau_{ij}(x)=y$, $\tau(A_X)=\tau_{ij}(A_X)$ and $|\tau(A_X)\cap A_Y|=|\tau_{ij}(A_X)\cap A_Y|$. Therefore,  $g(\tau)$ and $g(\tau_{ij})$ have different parity since $\tau$ and $\tau_{ij}$ have different parity, and thus $\tau$ and $\tau_{ij}$ are in different connected components by Lemma \ref{lem34}. 
		
		On the other hand,    $\textup{FS}(S_n, Y)$ has exactly $2$ connected components $F_1$ and $F_2$ by Theorem \ref{th1}, both $F_1$ and $F_2$ are $s$-connected since $Y$ is $s$-connected and Lemma \ref{lem611}. Without loss of generality, we assume $\tau\in V(F_1)$, $\tau_{ij}\in V(F_2)$. Let $V_1= \{\tau_{ij} \ | \ i,j\in\{1,2,\cdots,n-1\}   \setminus\{x\} \}$. Then $V_1\subset V(F_2)$ and  $|V_1|\geq\frac{(n-2)(n-3)}{2}\geq n-2>\frac{n}{2}\geq s$. Similarly, we take $\tau_{ij}=\tau'$ for some $i,j\in \{1,2,\cdots,n-1\}   \setminus\{x\}$, and define $\tau'_{pq}=\tau'\circ (p\ q)$ for any $p,q\in \{1,2,\cdots,n-1\}   \setminus\{x\}$, $V_2= \{\tau'_{pq} \ | \ p,q\in\{1,2,\cdots,n-1\}   \setminus\{x\} \}$. Then we have $V_2\subset V(F_1)$ and  $|V_2|=|V_1|>s$. So for any $\sigma\in V(F_1)$ (or $\sigma\in V(F_2)$) and any $V$, there exists a vertex $\sigma'$ in the same connected component of $\textup{FS}(S_n, Y)-V$ as $\sigma$ such that $\sigma'(x)=y$ by $|V_1|=|V_2|>s$. 
		
			Combining the above arguments, we complete the proof.
	\end{proof}

		\begin{lemma}\label{zy}
		Let $s\geq2$ be an integer, $X$ and $Y$ be two connected graphs of order $n(\geq3)$ such that $\Delta(X)+\kappa(Y)\geq n+s-1$, $V$ be any subset of $ V(\textup{FS}(X, Y))$ with  $|V|=s-1$, $\sigma\in V(\textup{FS}(X, Y))\backslash V$,  and fix $x\in V(X)$ and $y\in V(Y)$. Then after removing $ s-1$ vertices in $V$, there exists a vertex $\sigma'$ in the same connected component of $\textup{FS}(X, Y)-V$ as $\sigma$ such that $\sigma'(x)=y$.
	\end{lemma}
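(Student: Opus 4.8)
The plan is to reduce the statement to its star version, Lemma~\ref{l62}, using a maximum-degree vertex of $X$ together with the decomposition of Lemma~\ref{l63}; so Lemma~\ref{zy} is the arbitrary-$X$ counterpart of Lemma~\ref{l62}. Fix a vertex $w\in V(X)$ with $d_X(w)=\Delta(X)$, write $m=\Delta(X)+1$, and set $R=V(X)\setminus N_X[w]$, so $|R|=n-m$. The hypothesis $\Delta(X)+\kappa(Y)\ge n+s-1$ is exactly the statement that $\kappa(Y)\ge |R|+s$; this surplus of $s$ above $|R|$ is what the argument will spend. Since $Y$ is connected we have $\kappa(Y)\le n-1$, which together with $s\ge 2$ forces $\Delta(X)\ge 2$ and hence $m\ge 3$.

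The core of the proof is the case $x\in N_X[w]$ with $y\notin\sigma(R)$. Here I would freeze the positions of $R$ at their $\sigma$-values: take $\phi=\sigma|_R$ and work inside the slice $V_\phi$. By Lemma~\ref{l63} this slice induces a copy of $\textup{FS}\big(X[N_X[w]],Y'\big)$, where $Y'=Y-\sigma(R)$ has order $m$, and $\kappa(Y')\ge\kappa(Y)-|R|\ge s$, so $Y'$ is $s$-connected. As the star $S_m$ centred at $w$ is a spanning subgraph of $X[N_X[w]]$, Lemma~\ref{lem3} gives $\textup{FS}(S_m,Y')\preceq\textup{FS}(X[N_X[w]],Y')\cong\textup{FS}(X,Y)[V_\phi]$. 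Now $\sigma\in V_\phi$ and the removed set meets the slice in $V\cap V_\phi$ with $|V\cap V_\phi|\le s-1$; enlarging this to a set of size exactly $s-1$ inside the slice and avoiding $\sigma$, Lemma~\ref{l62} applied to the order-$m$ star and the $s$-connected $Y'$ (with target $x\in V(S_m)$ and $y\in V(Y')$) produces a vertex $\sigma'$ in the same component of $\textup{FS}(S_m,Y')$ minus that set, with $\sigma'(x)=y$. Every move used lies in $V_\phi$ and avoids $V$, so $\sigma'$ lies in the component of $\sigma$ in $\textup{FS}(X,Y)-V$. When $R=\emptyset$, i.e.\ $\Delta(X)=n-1$, this already finishes the proof.

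It remains to discard the two standing assumptions $x\in N_X[w]$ and $y\notin\sigma(R)$, and this is the main obstacle. Both failures are caused by a vertex of $R$ that we are not allowed to keep frozen: if $y=\sigma(r)$ with $r\in R$ then the position $r$ must be freed, and if $x\in R$ then the target position itself lies outside the star. I would handle these by enlarging the free region from $N_X[w]$ to $U=N_X[w]\cup Q_x\cup Q_r$, where $Q_x,Q_r$ are shortest $w$--$x$ and $w$--$r$ paths in $X$ (taking $Q_x$ or $Q_r$ trivial when $x\in N_X[w]$ or $y\notin\sigma(R)$). Freezing only $V(X)\setminus U$ and repeating the computation, the residual graph $Y'=Y-\sigma(V(X)\setminus U)$ satisfies $\kappa(Y')\ge s+(|U|-m)$; that is, the connectivity drop is exactly compensated by the number of extra path-vertices admitted into the free region. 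The induced graph $X[U]$ then contains a spanning ``spider'', namely $S_m$ with one or two of its legs extended along $Q_x,Q_r$, and the task reduces to routing the value $y$ out along such a leg to the position $x$ while staying in $\sigma$'s component and avoiding $V$.

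The hard part is therefore this final routing step in the spider $X[U]$, for which the paper's toolbox offers only the pure-star Lemma~\ref{l62}. I expect to prove it by induction on the total extra path length $|U|-m$: when at least one extra vertex remains the residual target graph is $(\ge s{+}1)$-connected, and I would use this to slide $y$ one vertex further out along the path (exchanging it, through a suitable neighbour, with whatever value currently occupies the next path-vertex), after which the shortened instance still has an $s$-connected residual graph and falls under the inductive hypothesis, the base case being the star handled above. The bipartite subtlety — that $\textup{FS}$ may split into two parity classes — should be inherited from Lemma~\ref{l62}, whose bipartite case already supplies enough configurations with $\sigma'(x)=y$ in each class, so no parity analysis beyond that of Lemma~\ref{l62} is expected to be needed.
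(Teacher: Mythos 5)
Your core slice argument (freeze $R$, pass to $\textup{FS}(S_m,Y')$ with $\kappa(Y')\ge s$, invoke Lemma \ref{l62}) is correct, and it is essentially the base case of the paper's proof, which replaces $X$ by a spanning tree and inducts on $n$ by deleting a leaf $u\notin N_X[x_0]$. But the routing step that you explicitly defer (``I expect to prove it by induction\ldots'') is where all the real work of Lemma \ref{zy} lives, and your sketch is missing the two mechanisms that make it go through. First, sliding $y$ one step along a leg is a swap along a fixed edge of $X$, and it is an edge of $\textup{FS}(X,Y)$ only if the two \emph{people} exchanged are adjacent in $Y$: Lemma \ref{l62} (and any spider analogue assembled from it) controls the occupant of one position, not two, so you cannot arrange that $y$ sits next to a $Y$-neighbour of itself. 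The paper resolves this by first proving the lemma under the auxiliary hypothesis $\sigma(x)y\in E(Y)$ (its Claim 1): in Case 2 there, the leaf $u=x$ stays frozen at $\sigma(x)$ throughout the slice, the inductive hypothesis places $y$ on the unique neighbour $u'$, and the final swap along $uu'$ is legal precisely because $\sigma(x)y\in E(Y)$; the unrestricted statement then follows by chaining Claim 1 along a $\sigma(x)$--$y$ path in $Y$. Your proposal has no counterpart of this adjacency bookkeeping, and without it the slide step is simply not available.

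Second, the post-swap configuration $\sigma_1=\varphi_1|^X\circ(u\ u')$ is a single forced vertex of $\textup{FS}(X,Y)$ and may lie in the removed set $V$; saying the shortened instance ``falls under the inductive hypothesis'' does not repair this, since rerunning the hypothesis unchanged can return the same blocked swap. The paper's fix is the iterated construction $\varphi_1,\varphi_2,\dots$: each blocked attempt certifies a distinct element of $V_2=\{\tau\in V \mid \tau(u)\neq\sigma(u)\}$, the next application of the inductive hypothesis additionally forbids the used $\varphi_j$'s (the enlarged set $V'_{1,i}$ still has size at most $s-1$, which is exactly why the lemma is formulated for an arbitrary $(s-1)$-set $V$ rather than just $V\cap V_{\phi}$), and since $|V_2|=s-1-|V_1|$ the attempt $\sigma_{s-|V_1|}$ must succeed. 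Note that your surplus connectivity $\kappa(Y')\ge s+(|U|-m)$ buys neither mechanism: collisions with $V$ and missing $Y$-edges are not connectivity defects of $Y'$. So the shape of your plan (star base case plus leaf-by-leaf leg induction) matches the paper's, but these two devices are the actual content of its proof and must be supplied before the argument is complete.
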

	\begin{proof}
		Let $T$ be a spanning tree of $X$ with $\Delta(T)=\Delta(X)$. Then $\textup{FS}(T, Y)\preceq \textup{FS}(X, Y)$. Therefore, it suffices to prove the conclusion when $X$ is a tree.
		
		 Clearly, if  $\sigma(x)=y$, the result holds by taking $\sigma'=\sigma$. Now we prove the following Claim 1 holds. 
		 
		 \textbf{Claim 1. }Let $X$ be a tree and $\sigma(x)y\in E(Y)$. Then the result of Lemma \ref{zy} holds. 
		 \begin{proof}
		  If  $X\cong S_n$, then the conclusion holds by Lemma \ref{l62}.  
		 
		 If $X\not\cong S_n$, now we show the conclusion holds by the following arguments. Let $x_0\in V(X)$ such that $d_{X}(x_0)=\Delta (X)$. Then there exists a leaf $u$ of $X$ such that $u\not\in N_X[x_0]$ since  $X$ is a tree and $X\not\cong S_n$. We proceed by induction on $n$.
		 
		 If $n=3$, then $\Delta(X)+\kappa(Y)=4$ by $\Delta(X)+\kappa(Y)\geq 3+s-1$, $\Delta(X)\leq n-1$ and $   \kappa(Y)\leq n-1$,  thus $X\cong S_3$, a contradiction with $X\not\cong S_n$. If $n=4$, similar to $n=3$, we have $ X\cong P_4, Y\cong K_4$. Therefore, the result holds by $\textup{FS}(P_4, K_4)$ is $3$-connected.

		 Assume that Claim 1 is true for $n-1$ $(n\geq 5)$.  
		 We will proceed to prove the conclusion holds for $n$.
		 
		 Let $X'=X-u$, $Y'=Y-\sigma(u)$,  $V_1=\{\tau\in V \ | \ \tau(u)=\sigma(u)\}$, $V_2=\{\tau\in V \ | \ \tau(u)\neq \sigma(u)\}$, $V'_1=\{\tau|_{X'}  \ | \ \tau\in V_1\}$. Then $\sigma\notin V_1$, $V=V_1\cup V_2$, $\sigma|_{X'}\in V(\textup{FS}(X', Y'))$, $V'_1\subset V(\textup{FS}(X', Y')) $,  $|V'_1|=|V_1|\leq |V|= s-1$, $\Delta(X')=\Delta(X)$ by $d_{X'}(x_0)=d_X(x_0)$. Thus $\Delta(X')+\kappa(Y')\geq \Delta(X)+\kappa(Y)-1\geq(n-1)+s-1$. Furthermore, $X'$, $Y'$ are two connected graphs of order $n-1\ ( \geq4)$ by $X'=X-u$, $Y'=Y-\sigma(u)$ and $\kappa(Y')\geq s\geq 2$. So we can use the induction hypothesis for $X'$, $Y'$ and $\sigma|_{X'}$.
		 
		 \textbf{Case 1.}  $u\neq x$ and $\sigma(u)\neq y$.
		 
		 For $X'$, $Y'$,  $\sigma|_{X'}$, $V'_1$, $x\in V(X')$, $y\in V(Y')$, we have $\sigma|_{X'}\notin V'_1$ by $\sigma \notin V_1$, and $|V'_1|\leq s-1$, then by the induction hypothesis, there exists a vertex $\varphi$ in the same connected component $F'_1$ of $\textup{FS}(X', Y')-V'_1$ as $\sigma|_{X'}$ such that $\varphi(x)=y$.
		 
		 Let $V_3=\{\phi|^{X} \ | \ \phi\in V(F'_1)\}$,  $F_1=\textup{FS}(X, Y)[V_3]$, $\sigma'=\varphi|^{X}$. Then $\varphi|^{X}(u)=\sigma(u)$ by $\varphi\in V(\textup{FS}(X', Y'))\backslash V'_1$ and the definition of $\varphi|^{X}$, and $\sigma'$ is in the same connected component $F_2$ of $\textup{FS}(X, Y)-V_1$ as $\sigma$ such that $\sigma'(x)=\varphi|^{X}(x)=y$, where $V(F_1)\subset V(F_2)$.
		 
		 Furthermore, for any $\tau\in V_2$, it is clear $\tau\not\in V(F_1)$ and $\tau\neq \varphi|^{X}$ by $\tau(u)\neq \sigma(u)=\varphi|^{X}(u)$, then  $\sigma'=\varphi|^{X}$ is in the same connected component of $\textup{FS}(X, Y)-V$ as $\sigma$ such that $\sigma'(x)=y$ by $V=V_1\cup V_2$.
		 
		 \textbf{Case 2.} $u=x$.

		 It is obvious that there exists $u'\in V(X')$ such that $uu'\in E(X)$. Clearly, $u'$ is unique and certain, $\sigma(u)=\sigma(x)\neq y$ by $\sigma(x)y\in E(Y)$, and thus $y\in V(Y')$.
		 
		 For $X'$, $Y'$, $\sigma|_{X'}$, $V'_1$,  $u'\in V(X')$, $y\in V(Y')$,	by the induction hypothesis, there exists a vertex $\varphi_1$ in the same connected component $H'_1$ of $\textup{FS}(X', Y')-V'_1$ as $\sigma|_{X'}$ such that $\varphi_1(u')=y$.
		 
		 Let $V_4=\{\phi|^{X} \ | \ \phi\in V(H'_1)\}$, $H_1=\textup{FS}(X, Y)[V_4]$. Then $\varphi_1|^{X}$ (which implies $\varphi_1|^{X}(u)=\sigma(u)$) is in the same connected component $H_2$ of $\textup{FS}(X, Y)-V_1$ as $\sigma$ such that $\varphi_1|^{X}(u')=y$, where $V(H_1)\subset V(H_2)$. 
		 
		 Let $\sigma_1=\varphi_1|^{X}\circ (u \ u')$. Then $\sigma_1\varphi_1|^{X}\in E(\textup{FS}(X, Y))$ by $uu'\in E(X)$,  $\sigma_1(u)\sigma_1(u')=\varphi_1|^{X}(u')\varphi_1|^{X}(u) =y\sigma(u)\in E(Y)$ and Definition \ref{d1}. Furthermore, we have $\sigma_1(u)=y\neq \sigma(u)$, which implies $\sigma_1\not\in V_1$. 
		 
		 If $\sigma_1\not\in V_2$,  then $\sigma_1$ is in the same connected component  of $\textup{FS}(X, Y)-V$ as $\varphi_1|^{X}$ (and thus as $\sigma$) such that $\sigma_1(u)=\sigma_1(x)=y$ by $V_2\cap V(H_1)=\emptyset$, $V=V_1\cup V_2$, and $\sigma_1\varphi_1|^{X}\in E(\textup{FS}(X, Y))$.  We take $\sigma'=\sigma_1$, the result holds.
		 
		 If $\sigma_1\in V_2$, then $V_2\neq \emptyset$, and $|V'_1|=|V_1|\leq s-2$ by $|V|=s-1$ and $V=V_1\cup V_2$. Let $V_{1,1}=V_1\cup \{\varphi_1|^{X}\}$, $V'_{1,1}=V'_1\cup \{\varphi_1\}$. Then $|V_{1,1}|=|V'_{1,1}|\leq s-1$.  For $X'$, $Y'$, $\sigma|_{X'}$, $V'_{1,1}$,  $u'\in V(X')$, $y\in V(Y')$, by the induction hypothesis, there exists a vertex $\varphi_2$ in the same connected component $H'_3$ of $\textup{FS}(X', Y')-V'_{1,1}$ as $\sigma|_{X'}$ such that $\varphi_2(u')=y$.
		 
		 Let $V_5=\{\phi|^{X} \ | \ \phi\in V(H'_3)\}$, $H_3=\textup{FS}(X, Y)[V_5]$. Then $\varphi_2|^{X}$ (which implies $\varphi_2|^{X}(u)=\sigma(u)$) is in the same connected component $H_4$ of $\textup{FS}(X, Y)-V_{1,1}$ as $\sigma$ such that $\varphi_2|^{X}(u')=y$, where  $V(H_3)\subset V(H_4)$.
		 
		 Let $\sigma_2=\varphi_2|^{X}\circ (u \ u')$.  Then $\sigma_2\varphi_2|^{X}\in E(\textup{FS}(X, Y))$ by $uu'\in E(X)$ and $\sigma_2(u)\sigma_2(u')=\varphi_2|^{X}(u')\varphi_2|^{X}(u)=y\sigma(u)\in E(Y)$ and Definition \ref{d1}. Furthermore, we have $\sigma_2(u)=y\neq \sigma(u)$, which implies $\sigma_2\not\in V_{1,1}$.
		 
		 If $\sigma_2\not\in V_2$,   then $\sigma_2$ is in the same connected component  of $\textup{FS}(X, Y)-V$ as $\sigma$ such that $\sigma_2(u)=\sigma_2(x)=y$ by  $V_2\cap V(H_3)=\emptyset$, $V=V_1\cup V_2\subset V_{1,1}\cup V_2$, and  $\sigma_2\varphi_2|^{X}\in E(\textup{FS}(X, Y))$.  We take $\sigma'=\sigma_2$, the result holds.
		 
		 If $\sigma_2\in V_2$,  we take $V_{1,2}=V_1\cup \{\varphi_1|^{X},\varphi_2|^{X}\}$, $V'_{1,2}=V'_1\cup \{\varphi_1, \varphi_2\}$, obtain $\varphi_3, H'_5, V_6, H_6, \varphi_3|^{X}$ and $\sigma_3$ by the similar discussion as the case of $\sigma_1\in V_2$.
		 
		 In general, by repeating the above discussion,  there exists some $i$ such that $2\leq i\leq s-1-|V_1|$ and $\sigma_i\notin V_{1,i-1}$.
		 
		 If $\sigma_i\not\in V_2$, then  $\sigma_i$ is in the same connected component  of $\textup{FS}(X, Y)-V$ as $\sigma$ such that $\sigma_i(u)=\sigma_i(x)=y$ by $V_2\cap V(H_{2i-1})=\emptyset$, $V=V_1\cup V_2\subset V_{1,i-1}\cup V_2$, and  $\sigma_i\varphi_i|^{X}\in E(\textup{FS}(X, Y))$.  We take $\sigma'=\sigma_i$, the result holds. 
		 
		 If  $\sigma_{i}\in V_2$, then $\sigma_1,\cdots,\sigma_{i-1}\in V_2$. Let $V_{1,i}=V_1\cup \{\varphi_1|^{X},\cdots,\varphi_i|^{X} \}$, $V'_{1,i}=V'_1\cup \{\varphi_1,\cdots,\varphi_i\}$.  Then $|V_{1,i}|=|V'_{1,i}|\leq  s-1$. For $X'$, $Y'$, $\sigma|_{X'}$, $V'_{1,i}$,  $u'\in V(X')$, $y\in V(Y')$, by the induction hypothesis, there exists a vertex $\varphi_{i+1}$ in the same connected component $H'_{2i+1}$ of $\textup{FS}(X', Y')-V'_{1,i}$ as $\sigma|_{X'}$ such that $\varphi_{i+1}(u')=y$.
		 
		 Let $V_{i+4}=\{\phi|^{X} \ | \ \phi\in V(H'_{2i+1})\}$, $H_{2i+1}=\textup{FS}(X, Y)[V_{i+4}]$. 	Then $\varphi_{i+1}|^{X}$ is in the same connected component $H_{2i+2}$ of $\textup{FS}(X, Y)-V_{1,i}$ as $\sigma$ such that $\varphi_{i+1}|^{X}(u')=y$, where  $V(H_{2i+1})\subset V(H_{2i+2})$. 
		 
		 Let $\sigma_{i+1}=\varphi_{i+1}|^{X}\circ (u \ u')$.  Then $\sigma_{i+1}\varphi_{i+1}|^{X}\in E(\textup{FS}(X, Y))$ by $uu'\in E(X)$,  $\sigma_{i+1}(u)\sigma_{i+1}(u')=\varphi_{i+1}|^{X}(u')\varphi_{i+1}|^{X}(u)=y\sigma(u)\in E(Y)$, and Definition \ref{d1}. Furthermore, we have $\sigma_{i+1}(u)=y\neq \sigma(u)$, which implies $\sigma_{i+1}\not\in V_{1,i}$.
		 
		 If $\sigma_{i+1}\not\in V_2$, then $\sigma_{i+1}$ is in the same connected component  of $\textup{FS}(X, Y)-V$ as $\sigma$ such that $\sigma_{i+1}(u)=y$ by  $V_2\cap V(H_{2i+1})=\emptyset$, $V=V_1\cup V_2\subset V_{1,i}\cup V_2$ and  $\sigma_{i+1}\varphi_{i+1}|^{X}\in E(\textup{FS}(X, Y))$.  We take $\sigma'=\sigma_2$, the result holds.
		 
		 If $\sigma_{i+1}\in V_2$, we repeat the above arguments, and there must exist some $j$ such that $\sigma_{j}\not\in V_2$. In fact, 
		 $\sigma_{s-|V_1|}\not\in V_2$. Otherwise,  we have $\sigma_1,\cdots,\sigma_{s-|V_1|}\in V_2$, a contradiction with $|V_2|=|V|-|V_1|=s-1-|V_1|$. Therefore, there exists a vertex $\sigma_j$ in the same connected component of $\textup{FS}(X, Y)-V$ as $\sigma$ such that $\sigma_j(x)=y$, where $j\in\{1,2,\cdots,s-|V_1|\}$. We take $\sigma'=\sigma_j$,  the result holds.
		 
		 \textbf{Case 3.} $u\neq x$ and $\sigma(u)= y$.
		 
		 It is obvious that there exists a vertex $y'\in V(Y')$ such that $yy'\in E(Y)$. Since
		 $\sigma(u)y'\in E(Y)$, we can repeat the arguments of Case $2$ with  $y$ replaced by $y'$. Thus there exists a vertex $\overline{\sigma}$ in the same connected component of $\textup{FS}(X, Y)-V$ as $\sigma$ such that $\overline{\sigma}(u)=y'$.
		 
		 Now, for $X,Y,x,y,\overline{\sigma},V$, 
		 we can repeat the arguments of Case 1 with $\sigma$ replaced by $\overline{\sigma}$, and there exists a vertex $\sigma'$ in the same connected component of $\textup{FS}(X, Y)-V$ as $\overline{\sigma}$ (and thus as $\sigma$) such that $\sigma'(x)=y$. 
		 
		 Combining the above arguments, we complete the proof of Claim 1.
		 \end{proof}

		Now we show the result of Lemma \ref{zy} holds.
		
		 If $\sigma(x)y\in E(Y)$, then the result holds by Claim 1.
		
		If  $\sigma(x)y\notin E(Y)$, then there exists a path $P$ from $\sigma(x)$ to $y$ since $Y$ is connected. Let $P=\sigma(x)y_1y_2\cdots y_ty$. Then there exists a vertex $\sigma'_1$ in the same connected component of $\textup{FS}(X, Y)-V$ as $\sigma$ such that $\sigma'_1(x)=y_1$ by Claim 1 and $\sigma(x)y_1\in E(\textup{FS}(X, Y))$. Furthermore, for each  $2\leq i\leq t$, there exists a  vertex $\sigma'_i$ in the same connected component of $\textup{FS}(X, Y)-V$ as $\sigma'_{i-1}$ such that $\sigma'_i(x)=y_i$ by Claim 1 and $y_{i-1}y_i\in E(\textup{FS}(X, Y))$. Finally, there exists a vertex $\sigma'$ in the same connected component of $\textup{FS}(X, Y)-V$ as $\sigma'_t$ such that $\sigma'(x)=y$ by Claim 1 and $y_ty\in E(\textup{FS}(X, Y))$. Therefore,  $\sigma'$ is in the same connected component of $\textup{FS}(X, Y)-V$ as $\sigma$. We complete the proof.
	\end{proof}
	
\vskip 0.4cm
		By definition of $s$-connected, we have the following observation immediately. 
	\begin{Observation}\label{p2}
		Let $G$ be a graph, $V(G)=U\cup W$ be a partition.  Then we have
		
		{\rm(i)} if  both $G[U]$  and  $G[W]$ are $s$-connected, and there exist $s$ vertex-disjoint edges between $G[U]$ and $G[W]$, then $G$ is $s$-connected;
		
		{\rm(ii)} if $G[U]$ is $s$-connected,   for any subset  $V$ of $V(G)$ with  $|V|=s-1<|W|$ and any vertex $v \in W\backslash V$,  there exists a vertex of $U$ which is in the same connected component  of $G-V$ as  $v$, then $G$ is $s$-connected;
		
		{\rm(iii)} if $G$ has  exactly two connected components $G_1$ and $G_2$, $U=U_1\cup U_2$ is a partition, $G[U]$ has exactly two connected components $G[U_1]$ and $G[U_2]$, where both $G[U_1]$ and $G[U_2]$ are $s$-connected,  for any subset  $V$ of $V(G)$ with  $|V|=s-1< |W|$ and any vertex $v \in W\backslash V$,  there exists a vertex of $U$ which is in the same connected component  of $G-V$ as  $v$,  then  both $G_1$ and $G_2$ are $s$-connected.
	\end{Observation}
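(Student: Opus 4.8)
The plan is to read all three parts straight off the definition of $s$-connectedness, namely that a graph is $s$-connected exactly when it stays connected after deleting any $s-1$ vertices. So in each case I fix an arbitrary $V\subseteq V(G)$ with $|V|=s-1$ and check that the relevant graph $G-V$ is connected, exploiting the partition $V=(V\cap U)\cup(V\cap W)$. For \textbf{(i)}, since $|V\cap U|\le s-1$ and $G[U]$ is $s$-connected, $G[U]-(V\cap U)$ is connected, and likewise $G[W]-(V\cap W)$ is connected. It then remains to join these two pieces, and here I use that the $s$ prescribed edges between $U$ and $W$ are \emph{vertex-disjoint}: deleting a single vertex kills at most one of them, so the $s-1$ deletions of $V$ leave at least one such edge intact. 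That surviving edge links the remaining part of $U$ to the remaining part of $W$, so $G-V$ is connected.

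For \textbf{(ii)}, I again fix $V$ with $|V|=s-1$. Because $G[U]$ is $s$-connected and $|V\cap U|\le s-1$, the graph $G[U]-(V\cap U)$ is connected and serves as a single core component of $G-V$. The assumption $s-1<|W|$ forces $W\setminus V\ne\emptyset$, and for every $v\in W\setminus V$ the hypothesis supplies a vertex of $U$ lying in the same component of $G-V$ as $v$; that $U$-vertex belongs to the connected core. Hence every vertex of $W\setminus V$ attaches to the core, so all of $V(G)\setminus V$ lies in one component and $G-V$ is connected.

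For \textbf{(iii)} the new ingredient is the splitting of the global component structure. After relabeling I may assume $U_1\subseteq V(G_1)$ and $U_2\subseteq V(G_2)$, since $G[U_1]$ and $G[U_2]$ are the two components of $G[U]$ and so must sit in distinct components of $G$. To show $G_1$ is $s$-connected I fix $V\subseteq V(G_1)$ with $|V|=s-1$; then $G[U_1]-(V\cap U_1)$ is connected, and for each $v\in V(G_1)\setminus(U_1\cup V)$ the hypothesis provides a $U$-vertex in the same component of $G-V$ as $v$. The one point needing genuine care is that this $U$-vertex cannot lie in $U_2$: a vertex of $U_2\subseteq V(G_2)$ is in a different component of $G$ than $v\in V(G_1)$, hence in a different component of $G-V$. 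So the connecting vertex lies in $U_1$ and attaches $v$ to the core $G[U_1]-(V\cap U_1)$, making $G_1-V$ connected; the symmetric argument handles $G_2$.

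I expect no serious obstacle, which is exactly why this is phrased as an Observation: each part is a one-line deduction from the definition. The only subtlety is the component-separation step in \textbf{(iii)} (ruling out $U_2$-vertices as connectors for $G_1$, and vice versa), together with bookkeeping to confirm which of the two global components $U_1$ and $U_2$ occupy; everything else is a routine application of the $s$-connectedness of $G[U]$ (resp.\ $G[U_1]$, $G[U_2]$) to absorb the deletions inside $U$.
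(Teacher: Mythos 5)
Your overall route is exactly the paper's: the paper states this Observation without proof (``By definition of $s$-connected, we have the following observation immediately''), and your write-up is the direct verification that definition suggests --- fix an arbitrary $(s-1)$-set $V$, observe that $G[U]-(V\cap U)$ (resp.\ $G[U_i]-(V\cap U_i)$) remains a connected core, and attach every surviving vertex to that core, via an untouched edge from the $s$ vertex-disjoint edges in (i) (vertex-disjointness guarantees the $s-1$ deletions destroy at most $s-1$ of them) and via the attachment hypothesis in (ii) and (iii). Parts (i) and (ii) are complete and correct as you wrote them.

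In (iii), however, one justification is misstated, even though the conclusion you draw from it is true. You claim $G[U_1]$ and $G[U_2]$ ``must sit in distinct components of $G$'' simply because they are the two components of $G[U]$; that implication is false in general --- take $G$ to be the path $u_1\,w\,u_2$ with $U=\{u_1,u_2\}$, where $G[U]$ has two components but $G$ is connected, the separation inside $G[U]$ being bridged through $W$. In the setting of (iii) the distinctness must instead be extracted from the attachment hypothesis: if $U_1\cup U_2\subseteq V(G_1)$, then $V(G_2)\subseteq W$; since $G[U_1]$ is $s$-connected we have $|U_1|\geq s$, so $|V(G_1)|\geq s-1$ and we may choose $V\subseteq V(G_1)$ with $|V|=s-1$, and then any $v\in V(G_2)=V(G_2)\setminus V\subseteq W\setminus V$ would have to share a component of $G-V$ with a vertex of $U\subseteq V(G_1)$, which is impossible because deleting vertices never merges components of $G$. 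With that one-line repair your relabeling is legitimate, and the rest of your argument for (iii) --- in particular the genuinely needed observation that the connecting $U$-vertex for $v\in V(G_1)$ cannot lie in $U_2$, again because components of $G-V$ refine those of $G$ --- goes through as written.
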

	
		We denote $W_n$ as the wheel graph with $n$ vertices.

	\begin{lemma}\label{l2}
		Let $Z$ be a connected graph of order $n (\geq4)$, $\Delta(Z)= n-2$. Then  $\textup{FS}(Z,W_n)$ is $2$-connected. 
	\end{lemma}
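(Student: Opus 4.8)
The plan is to verify the two hypotheses of Observation \ref{p2}(ii) with $s=2$. First observe that $\kappa(W_n)=3$ (each rim vertex has degree $3$, and deleting its two rim neighbours together with the hub isolates it, while $W_n$ is easily $3$-connected), that $W_n$ is non-bipartite (it contains triangles through the hub), and that $W_n\notin\{C_n,\theta_0\}$ (its hub has degree $n-1\ge 3$). Hence, with $\Delta(Z)=n-2$ we get $\Delta(Z)+\kappa(W_n)=n+1=n+s-1$ for $s=2$, so the hypotheses of Lemma \ref{zy} are met for $\textup{FS}(Z,W_n)$. Fix $z_0\in V(Z)$ with $d_Z(z_0)=n-2$, let $w$ be the unique vertex with $z_0w\notin E(Z)$, let $h$ be the hub of $W_n$, and restrict attention to $n\ge 5$ (the case $n=4$, where $W_4=K_4$ and $Z\in\{P_4,C_4\}$, is handled directly, since $\textup{FS}(P_4,K_4)$ is $3$-connected and $\textup{FS}(C_4,K_4)$ contains it as a spanning subgraph by Lemma \ref{lem3}).

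The decisive step is the choice of which vertex to fix. The naive choice $z_0\mapsto h$ identifies the core with $\textup{FS}(Z-z_0,C_{n-1})$, which is typically disconnected (for instance whenever $Z-z_0$ is disconnected), so it cannot serve as a $2$-connected core. Instead I fix the image of the non-neighbour $w$: choose a rim vertex $r$ and set $U=\{\sigma:\sigma(w)=r\}$, with $W=V(\textup{FS}(Z,W_n))\setminus U$. By Lemma \ref{l63} (with $m=n-1$) we have $\textup{FS}(Z,W_n)[U]\cong\textup{FS}(Z-w,\,W_n-r)$. Since $z_0$ is adjacent in $Z$ to every vertex except $w$, it is adjacent to all $n-2$ remaining vertices of $Z-w$, so $Z-w$ contains a spanning star $S_{n-1}$ centred at $z_0$; thus $\textup{FS}(S_{n-1},W_n-r)\preceq\textup{FS}(Z-w,W_n-r)$ by Lemma \ref{lem3}. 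Now $W_n-r$ is the fan (a hub adjacent to every vertex of a path $P_{n-2}$), which for $n\ge5$ is $2$-connected, non-bipartite, not a cycle and not $\theta_0$, with $\delta(W_n-r)=2$. Hence $\textup{FS}(S_{n-1},W_n-r)$ is connected by Theorem \ref{th1} and, being a single component, is $2$-connected by Lemma \ref{lem611}. Passing to the spanning supergraph, $\textup{FS}(Z,W_n)[U]$ is $2$-connected, which is the first hypothesis of Observation \ref{p2}(ii).

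For the second hypothesis, take any single vertex $V=\{v_0\}$ and any $\tau\in W\setminus\{v_0\}$. Applying Lemma \ref{zy} with $x=w$, $y=r$, $s=2$ produces a vertex $\tau'$ lying in the same component of $\textup{FS}(Z,W_n)-v_0$ as $\tau$ with $\tau'(w)=r$, i.e. $\tau'\in U$; as $1<|W|$, the reachability condition holds, and Observation \ref{p2}(ii) yields that $\textup{FS}(Z,W_n)$ is $2$-connected. I expect the main obstacle to be exactly the choice made in the second paragraph: the intuitive decomposition by the image of the maximum-degree vertex $z_0$ fails to produce a $2$-connected core, and the argument only succeeds because fixing the non-neighbour $w$ turns the induced core into $\textup{FS}(S_{n-1},\,W_n-r)$ on the positions $V(Z)\setminus\{w\}$ dominated by $z_0$, whose $2$-connectedness is precisely what Theorem \ref{th1} and Lemma \ref{lem611} supply once one checks that the fan $W_n-r$ is $2$-connected, non-bipartite, distinct from $C_{n-1}$ and $\theta_0$, and has minimum degree $2$.
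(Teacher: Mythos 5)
Your proof is correct and takes essentially the same route as the paper's: the paper first passes to the spanning subgraph $\textup{Dand}_{2,n-2}\preceq Z$ and fixes the image of the non-neighbour $x_0$ of the centre at a rim vertex $y_0$, so that its core $\textup{FS}(\textup{Dand}_{2,n-2}-x_0,\,W_n-y_0)\cong\textup{FS}(S_{n-1},\,W_n-y_0)$ coincides with yours, and both arguments conclude via Theorem \ref{th1} with Lemma \ref{lem611} for the core, Lemma \ref{zy} for reachability, and {\rm(ii)} of Observation \ref{p2}. Your only (immaterial) variation is to work with $Z$ directly and extract the spanning star inside the core, rather than reducing to the dandelion at the outset.
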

	\begin{proof}
   Clearly, we have $\textup{Dand}_{2,n-2}\preceq Z$. By Lemma \ref{lem3}, it  suffices to show that $\textup{FS}(\textup{Dand}_{2,n-2},W_n)$ is $2$-connected.
   
   	Without loss of generality, we take $x_0,x_1\in V(\textup{Dand}_{2,n-2})$ with $d_{\textup{Dand}_{2,n-2}}(x_1)=\Delta(\textup{Dand}_{2,n-2})$ and $ x_0x_1\notin E(\textup{Dand}_{2,n-2})$, $y_0\in V(W_n)$ with $ d_{W_n}(y_0)=3$. If $n=4$, then $\textup{FS}(\textup{Dand}_{2,2} -x_0,W_4-y_0)$ is $2$-connected by direct calculation. If $n\geq5$, then $\textup{FS}(\textup{Dand}_{2,n-2}-x_0,W_n-y_0)$ is $2$-connected by $\textup{Dand}_{2,n-2}-x_0\cong S_{n-1}$, $\kappa(W_n-y_0)=\delta(W_n-y_0)=2$, Theorem \ref{th1} and Lemma \ref{lem611}. 
   	
   	Let $\overline{V}=\{\sigma\in V(\textup{FS}(\textup{Dand}_{2,n-2},W_n))|\sigma(x_0)=y_0\}$. Then  $\textup{FS}(\textup{Dand}_{2,n-2},W_n)[\overline{V}]\cong \textup{FS}(\textup{Dand}_{2,n-2}-x_0,W_n-y_0)$ by Lemma \ref{l63}, and thus  $\textup{FS}(\textup{Dand}_{2,n-2},W_n)[\overline{V}]$ is  $2$-connected.
   	
   	Let $X=\textup{Dand}_{2,n-2}$, $Y=W_n$, $V$ be any subset of $\textup{FS}(\textup{Dand}_{2,n-2},W_n)$ with $|V|=s-1=1$. Then $\Delta(X)+\kappa(Y)= n+1$. We fix $x_0,y_0$, which are defined as above, for any $\sigma \in V(\textup{FS}(X,Y))\backslash V$, by applying Lemma \ref{zy}, we know there exists a vertex $\sigma'$ in the same connected component of $\textup{FS}(X,Y)-V$ as $\sigma$ such that $\sigma'(x_0)=y_0$. On the other hand, we take $G=\textup{FS}(X,Y)$, $U=\overline{V}$ and $V(\textup{FS}(X,Y))=U\cup W$. Then $G[U]=\textup{FS}(X,Y)[\overline{V}]$ is $2$-connected, and thus $G=\textup{FS}(X,Y)$ is $2$-connected by the above arguments and  {\rm(ii)} of Observation \ref{p2}. Therefore, we complete the proof by $X=\textup{Dand}_{2,n-2}$, $Y=W_n$.

	\end{proof}
	
	\begin{lemma}\label{b1}
		Let $\theta_1$ be shown in Figure \ref{Figure 1}. Then $\textup{FS}(\textup{Dand}_{2,6},\theta_1)$ is  $2$-connected.
	\end{lemma}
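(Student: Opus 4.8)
The plan is to follow the strategy used for Lemma \ref{l2}, decomposing $\textup{FS}(\textup{Dand}_{2,6},\theta_1)$ into fibers according to the image of one carefully chosen leaf. Write $\textup{Dand}_{2,6}$ (which has order $n=8$) with center $c$ of degree $\Delta(\textup{Dand}_{2,6})=6=n-2$, the five star-leaves attached to $c$, and a pendant path $p_1p_2c$, where $p_1$ is the leaf farthest from $c$. Taking $x_0=p_1$, which is not adjacent to the maximum-degree vertex $c$, we get $\textup{Dand}_{2,6}-x_0\cong S_7$: after deleting $p_1$, the vertex $p_2$ becomes a sixth leaf of the star centered at $c$. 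This reduction to a star is exactly the feature that lets us invoke Wilson's theorem on the fibers.

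Next I would choose a vertex $y_0\in V(\theta_1)$ whose deletion leaves a graph $\theta_1-y_0$ on $7$ vertices that is $2$-connected, non-bipartite, distinct from both $C_7$ and $\theta_0$, and with $\delta(\theta_1-y_0)\ge 2$; reading off the structure of $\theta_1$ from Figure \ref{Figure 1} shows such a $y_0$ exists, since one only has to avoid creating a cut vertex and avoid destroying every odd cycle. Then $\textup{FS}(S_7,\theta_1-y_0)$ is connected by Theorem \ref{th1}, and its connectivity equals $\delta(\theta_1-y_0)\ge 2$ by Lemma \ref{lem611}, so it is $2$-connected. Setting $\overline V=\{\sigma:\sigma(x_0)=y_0\}$, Lemma \ref{l63} gives $\textup{FS}(\textup{Dand}_{2,6},\theta_1)[\overline V]\cong\textup{FS}(S_7,\theta_1-y_0)$, so this fiber is $2$-connected.

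It then remains to link every configuration to $\overline V$ after the removal of one vertex, and here the key structural fact is that $\theta_1$ is $3$-connected. This can be read off the figure, and it is in fact forced: since $\textup{Dand}_{2,6}\preceq\textup{Lollipop}_{2,6}$, Lemma \ref{lem3} gives that $\textup{FS}(\textup{Dand}_{2,6},\theta_1)$ is a spanning subgraph of $\textup{FS}(\textup{Lollipop}_{2,6},\theta_1)$; were $\theta_1$ only $2$-connected, the latter would be disconnected by Lemma \ref{th4} (as $n-k+1=3$), and so would its spanning subgraph, contradicting the assertion. Hence $\kappa(\theta_1)=3$ and $\Delta(\textup{Dand}_{2,6})+\kappa(\theta_1)=6+3=9=n+s-1$ with $s=2$, so Lemma \ref{zy} applies with $x=x_0$, $y=y_0$: after removing any single vertex $v$, every $\sigma$ lies in the same component of $\textup{FS}(\textup{Dand}_{2,6},\theta_1)-v$ as some $\sigma'\in\overline V$. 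Taking $G=\textup{FS}(\textup{Dand}_{2,6},\theta_1)$ and $U=\overline V$, part (ii) of Observation \ref{p2} yields that $G$ is $2$-connected.

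The main obstacle is the choice of $y_0$ and the verification of its four required properties simultaneously from the explicit drawing of $\theta_1$: one must exhibit a vertex whose deletion keeps the graph $2$-connected and non-bipartite while avoiding the Wilson exceptions $C_7$ and $\theta_0$. If for the actual $\theta_1$ no single deletion clears all of these obstructions at once, the fiber step would instead be replaced by a direct verification that the fiber $\textup{FS}(\textup{Dand}_{2,6},\theta_1)[\overline V]$ is $2$-connected, exactly as the $n=4$ base case was dispatched by direct calculation in Lemma \ref{l2}. Once $y_0$ is fixed, the remainder is a routine transcription of the argument for Lemma \ref{l2}.
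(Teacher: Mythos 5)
Your proposal follows essentially the same route as the paper's proof: delete the far leaf $x_0$ so that $\textup{Dand}_{2,6}-x_0\cong S_7$, choose $y_0$ so that $\theta_1-y_0$ is $2$-connected, non-bipartite and avoids $C_7$ and $\theta_0$ (the paper takes $y_0$ non-adjacent to the degree-$5$ vertex of $\theta_1$, whereas you only assert existence with a direct-computation fallback), deduce via Theorem \ref{th1} and Lemma \ref{lem611} that the fiber $\overline{V}$ is $2$-connected, and finish with Lemma \ref{zy} and Observation \ref{p2}(ii) exactly as in Lemma \ref{l2}. One caveat: your side argument that $\kappa(\theta_1)=3$ is ``forced'' is circular, since it derives a contradiction from disconnectedness of $\textup{FS}(\textup{Dand}_{2,6},\theta_1)$ by appealing to the very assertion being proved; drop that remark and keep only the direct verification of $\kappa(\theta_1)=3$ from the figure, which is all the paper uses.
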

	\begin{proof}
		Clearly, we have $\Delta(\textup{Dand}_{2,6})=6$ and $ \kappa(\theta_1)=3$. Without loss of generality, we take $x_0,x_1\in V(\textup{Dand}_{2,6})$ with $d(x_1)=\Delta(\textup{Dand}_{2,6})$ and $ x_0x_1\notin E(\textup{Dand}_{2,6})$, $y_0,y_1\in V(\theta_1)$ with $ d(y_1)=5$ and $ y_0y_1\notin E(\theta_1)$. Then  $\theta_1-y_0$ is 2-connected, non-bipartite, $\theta_1-y_0\not\in\{C_7,\theta_0\}$ and $\textup{Dand}_{2,6}-x_0\cong S_{7}$. Thus $\textup{FS}(\textup{Dand}_{2,6}-x_0,\theta_1-y_0)$ is  $2$-connected by Theorem \ref{th1} and Lemma \ref{lem611}. 
		
		Similar to the proof of Lemma \ref{l2}, we have that  $\textup{FS}(\textup{Dand}_{2,6},\theta_1)$ is  $2$-connected.
	\end{proof}
	
	\begin{lemma}\label{bz1}
		Let $G$ be a non-bipartite graph of order $n$. Then $G\cong C_n$ and $n$ is  odd  if $G-v$ is a bipartite graph for each $v\in V(G)$.
	\end{lemma}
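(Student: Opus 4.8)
The plan is to first show that $G$ must contain a Hamiltonian odd cycle, and then argue that this cycle can have no chords. Since $G$ is non-bipartite, it contains an odd cycle $C$. The key leverage from the hypothesis is that for any vertex $v\notin V(C)$ the subgraph $G-v$ would still contain $C$ and hence be non-bipartite, contradicting the assumption that $G-v$ is bipartite. Therefore every odd cycle of $G$ must pass through \emph{every} vertex; in particular $C$ is a spanning cycle, so it is a Hamiltonian cycle of $G$. Because $C$ is odd and has exactly $n$ vertices, $n$ is odd, and $C_n\preceq G$.

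Next I would rule out any edge of $G$ lying outside this Hamiltonian cycle. Write $C=v_1v_2\cdots v_nv_1$ and suppose, toward a contradiction, that $uw\in E(G)$ is a chord, i.e.\ $u,w$ are non-consecutive on $C$. This chord splits $C$ into two arcs of edge-lengths $a$ and $b$ with $a+b=n$ and $a,b\ge 2$; together with the chord, these arcs form two cycles of lengths $a+1$ and $b+1$, both strictly less than $n$. Since $n=a+b$ is odd, exactly one of $a+1,b+1$ is odd, so the chord produces a \emph{proper} odd cycle $C'$ with $V(C')\subsetneq V(G)$. Choosing any $z\in V(G)\setminus V(C')$, the graph $G-z$ still contains $C'$ and is therefore non-bipartite, contradicting the hypothesis. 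Hence $G$ has no chords, and $G\cong C_n$ with $n$ odd.

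The only delicate point, and the step I would check most carefully, is the parity bookkeeping in the chord argument: one must verify that a chord (as opposed to a cycle edge) forces both arcs to have length at least $2$, so that both resulting cycles are proper subgraphs missing at least one vertex, and that the odd parity of $n$ then forces exactly one of the two subcycles to be odd. Everything else reduces to a single uniform observation—that a bipartite subgraph cannot contain an odd cycle—applied once to locate the spanning odd cycle and once to forbid chords.
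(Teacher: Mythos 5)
Your proof is correct and takes essentially the same route as the paper's: both locate an odd cycle, force it to span $G$ by deleting a vertex off it (which also rules out disconnectedness), and then eliminate chords by noting that a chord of the odd Hamiltonian cycle yields a strictly shorter odd cycle avoiding some vertex. Your only departure is cosmetic --- where the paper removes an explicitly chosen vertex $v_2$ via a three-way case analysis on the chord's position, you simply observe that any vertex off the odd subcycle suffices, which is a cleaner way to finish the same argument.
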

	\begin{proof}
		Let $G_1,\cdots, G_t$ be all connected components of $G$. Then there exists an odd cycle $C_r$  in some connected component $G_i$  for $1\leq i\leq t$. Without loss of generality, we suppose $i=1$.
		
		 Firstly, we show $G_1$ is the unique connected component of $G$. Otherwise, $t\geq2$. Let $v_0\in V(G_t)$. Then $G-v_0$ is  bipartite. But $G-v_0$ contains the odd cycle $C_r$, and thus $G-v_0$ is non-bipartite, a contradiction. Therefore, we have $G= G_1$. 
		 
		 Secondly, we show $r=n$. Otherwise, there exists  $v_1\in V(G)\backslash V(C_r)$, and $G-v_1$ is  non-bipartite since $G-v_1$ contains the odd cycle $C_r$, a contradiction.  Therefore, we have $C_n \preceq G$. Without loss of generality, we suppose $C_n=u_1u_2\cdots u_nu_1$.
		 
		 Finally, we show $G\cong C_n$. Otherwise,  there  exist $i,j\in \{1,2,\cdots,n\}$ such that $u_iu_j\in E(G)$ and $2\leq|j-i|\leq n-2$. Let $i< j$ and $j-i=k$. Then $C'=u_iu_{i+1}\cdots u_ju_i$ with length $k+1$ and $C''=u_ju_{j+1}\cdots u_nu_1\cdots u_i u_j$ with length $n-k+1$ are two cycles of $G$ with different parity.	Thus $G-v_2$ still contains an odd cycle and $G-v_2$ is non-bipartite, where $$v_2=\begin{cases}
		 	u_{j+1},&\text{if} \ k \ \text{is even and} \ j<n, \\
		 	u_1,&\text{if} \ k \ \text{is even and} \ j= n, \\
		 	u_{i+1}, &\text{if} \ k \ \text{is odd},
		\end{cases}$$ a contradiction. Therefore, $G\cong C_n$ and $n$ is odd.
	\end{proof}

	\begin{lemma}\label{l3}
		Let  $Y$ be a graph of order $n (\geq4)$, $\kappa(Y)=3$. Then
		
		{\rm(i)} if  $Y$ is a bipartite graph, then $\textup{FS}(\textup{Dand}_{2,n-2},Y)$ has exactly two connected components, where each connected component is  $2$-connected;
		
		{\rm(ii)} if  $Y$ is non-bipartite, then $\textup{FS}(\textup{Dand}_{2,n-2},Y)$ is $2$-connected.
	\end{lemma}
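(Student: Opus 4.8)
The plan is to prove both parts through the single partition scheme that already drives Lemma~\ref{l2} and Lemma~\ref{b1}, peeling a star off the dandelion by deleting its far leaf. Realize $\textup{Dand}_{2,n-2}$ as a star with centre $c$ carrying a pendant path $cab$, so that $\Delta(\textup{Dand}_{2,n-2})=d(c)=n-2$ and $\textup{Dand}_{2,n-2}-b\cong S_{n-1}$; put $x_0=b$ and $G=\textup{FS}(\textup{Dand}_{2,n-2},Y)$. Because $\Delta(\textup{Dand}_{2,n-2})+\kappa(Y)=(n-2)+3=n+1=n+s-1$ for $s=2$, Lemma~\ref{zy} is available: for every single vertex $V$, every $\sigma\notin V$, and every chosen $y_0\in V(Y)$, some $\sigma'$ lying in the same component of $G-V$ as $\sigma$ satisfies $\sigma'(x_0)=y_0$. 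Setting $U=\{\sigma:\sigma(x_0)=y_0\}$, Lemma~\ref{l63} gives $G[U]\cong\textup{FS}(S_{n-1},Y-y_0)$, and the reachability just stated is exactly the hypothesis required to invoke Observation~\ref{p2}. Everything therefore reduces to understanding $\textup{FS}(S_{n-1},Y-y_0)$ for a suitable $y_0$.

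For part (ii) I would first set aside the two graphs on which this reduction breaks: if $Y\cong W_n$ the statement is Lemma~\ref{l2}, and if $Y\cong\theta_1$ (possible only for $n=8$) it is Lemma~\ref{b1}. For every other $Y$, choose $y_0$ with $Y-y_0$ non-bipartite; such a vertex exists because $Y$ is non-bipartite while $\kappa(Y)=3$ forces $Y\not\cong C_n$, so Lemma~\ref{bz1} forbids all one-vertex deletions from being bipartite. Since $\kappa(Y)=3$, the deletion $Y-y_0$ is $2$-connected, whence $\delta(Y-y_0)\geq2$; moreover $Y-y_0\not\cong C_{n-1}$, for $Y-y_0\cong C_{n-1}$ together with $\delta(Y)\geq\kappa(Y)=3$ would force every cycle vertex to be joined to $y_0$, i.e. $Y\cong W_n$, already excluded. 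Once $Y-y_0\cong\theta_0$ is also ruled out (discussed below), Theorem~\ref{th1} makes $\textup{FS}(S_{n-1},Y-y_0)$ connected and Lemma~\ref{lem611} makes it $\delta(Y-y_0)$-connected, hence $2$-connected; so $G[U]$ is $2$-connected and part~(ii) of Observation~\ref{p2} gives that $G$ is $2$-connected.

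For part (i) no care in choosing $y_0$ is needed. As $Y$ is bipartite, $Y-y_0$ is bipartite, again $2$-connected with $\delta(Y-y_0)\geq2$, and it cannot equal $C_{n-1}$ since that would once more force the non-bipartite graph $Y\cong W_n$. Hence the second half of Theorem~\ref{th1} together with Lemma~\ref{lem611} shows $G[U]\cong\textup{FS}(S_{n-1},Y-y_0)$ has exactly two components, each $2$-connected; write $U=U_1\cup U_2$ accordingly. Both $\textup{Dand}_{2,n-2}$ and $Y$ being bipartite, Lemma~\ref{lem32} tells us $G$ is disconnected, so it has at least two components. Applying Lemma~\ref{zy} with one deleted vertex shows every vertex of $G$ is joined inside $G$ to some vertex of $U$; consequently $U_1$ and $U_2$ cannot lie in one $G$-component (else all of $U$, and then all of $G$, would be connected, contradicting the previous sentence), so $G$ has exactly two components. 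Finally part~(iii) of Observation~\ref{p2} promotes each of them to $2$-connected.

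The genuine obstacle is the exceptional-graph bookkeeping inside part (ii): showing that $y_0$ can always be taken with $Y-y_0\notin\{C_{n-1},\theta_0\}$. The cycle case is painless, since it pins $Y$ down to $W_n$; but $Y-y_0\cong\theta_0$ can arise only at $n=8$, and there one must examine the finitely many $8$-vertex graphs $Y$ with $\kappa(Y)=3$ admitting a deletion onto $\theta_0$. I expect that the sole graph for which no alternative good deletion vertex exists is $\theta_1$ — which is exactly why Lemma~\ref{b1} is quarantined — and confirming this, either by producing a different $y_0$ or by identifying $Y$ with $\theta_1$, is the one step that calls for explicit checking rather than structural argument.
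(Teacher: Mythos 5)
Your overall scheme is the same as the paper's: reduce via the spanning subgraph $\textup{Dand}_{2,n-2}$, delete the far leaf $x_0$ and a suitable $y_0$, identify $G[U]\cong\textup{FS}(S_{n-1},Y-y_0)$ by Lemma \ref{l63}, feed the reachability statement of Lemma \ref{zy} (valid since $\Delta+\kappa=n+1$) into Observation \ref{p2}, handle the bipartite case via Theorem \ref{th1}, Lemma \ref{lem611}, Lemma \ref{lem32} and part (iii) of Observation \ref{p2}, and use Lemma \ref{bz1} to find a non-bipartite one-vertex deletion in case (ii). Your exclusion of $Y-y_0\cong C_{n-1}$ (forcing $Y\cong W_n$, handled by Lemma \ref{l2}) and your part (i) argument also match the paper. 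All of that is sound.

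The genuine gap is exactly where you flag it: the $\theta_0$ case in part (ii). You propose to enumerate the $8$-vertex graphs $Y$ with $\kappa(Y)=3$ admitting a deletion onto $\theta_0$ and conjecture that only $Y\cong\theta_1$ lacks an alternative good deletion vertex, but you neither carry out this check nor is your quarantine adequate as stated: if $Y-y_1\cong\theta_0$, then $\delta(Y)\geq\kappa(Y)=3$ forces $y_1$ to be adjacent to all five degree-$2$ vertices of $\theta_0$, so $Y$ is $\theta_1$ \emph{or} $\theta_1$ plus one or two edges from $y_1$ to the branch vertices --- graphs not isomorphic to $\theta_1$, which your case split ``$Y\cong W_n$ or $Y\cong\theta_1$'' does not cover, and for which your main argument as written may again land on $Y-y_0\cong\theta_0$. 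The paper closes this hole without any enumeration or re-choice of $y_0$: from the degree observation above one gets $\theta_1\preceq Y$, and since $\textup{FS}(\textup{Dand}_{2,6},\theta_1)$ is $2$-connected (Lemma \ref{b1}) and is a spanning subgraph of $\textup{FS}(\textup{Dand}_{2,6},Y)$ (Lemma \ref{lem3}), the latter is $2$-connected. Your route is salvageable --- deleting a branch vertex $u$ of the embedded $\theta_0$ does yield a non-bipartite $2$-connected graph outside $\{C_7,\theta_0\}$ in all three candidate graphs (indeed Lemma \ref{b1} itself exploits such a vertex for $\theta_1$) --- but as submitted this verification is missing, so the proof is incomplete precisely at the step you identified, and the spanning-subgraph argument via $\theta_1\preceq Y$ is the cleaner missing idea.
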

	\begin{proof}
			Let $x_0,x_1\in V(\textup{Dand}_{2,n-2})$ with $d(x_1)=\Delta(\textup{Dand}_{2,n-2})  =n-2$ and $ x_0x_1\notin E(\textup{Dand}_{2,n-2})$. Then $\textup{Dand}_{2,n-2}-x_0\cong S_{n-1}$.
			
		{\rm(i)} Clearly, there exists $y_0\in V(Y)$ such that $\kappa(Y-y_0)=2$ and $Y-y_0$ is  bipartite. 
		
		Firstly, we show   $Y-y_0\not\in\{C_{n-1},\theta_0\}$.  It is clear that $Y-y_0\not\cong \theta_0$ by $Y$ is  bipartite  and $\theta_0$ is non-bipartite. If $Y-y_0\cong C_{n-1}$, then $Y\cong W_n$ by $\kappa(Y)=3$, a contradiction with $Y$ is  bipartite. 
		
		Secondly, we show  $\textup{FS}(\textup{Dand}_{2,n-2},Y)$ has exactly two connected components and each connected component is $2$-connected. 
		
Clearly, $\textup{FS}(\textup{Dand}_{2,n-2}-x_0,Y-y_0)$ has exactly two connected components $F_1$ and $F_2$  by $\textup{Dand}_{2,n-2}-x_0\cong S_{n-1}$ and Theorem \ref{th1}, where both $F_1$ and $F_2$ are $2$-connected by $\kappa(Y-y_0)=2$ and Lemma \ref{lem611}.  
		Let $\overline{V}=\{\sigma\in V(\textup{FS}(\textup{Dand}_{2,n-2},Y)|\sigma(x_0)=y_0\}$. Then  $\textup{FS}(\textup{Dand}_{2,n-2},Y)[\overline{V}]\cong\textup{FS}(\textup{Dand}_{2,n-2} -x_0,Y-y_0)= F_1\cup F_2$ by Lemma \ref{l63}, and 
		thus $\textup{FS}(\textup{Dand}_{2,n-2},Y)$ has at most two connected components by $\Delta(\textup{Dand}_{2,n-2})+\kappa(Y)=n+1$ and Lemma \ref{zy}. Furthermore,  $\textup{FS}(\textup{Dand}_{2,n-2},Y)$ has exactly two connected components $F_3$ and $F_4$  by Lemma \ref{lem32}.
		
		Now we show both $F_3$ and $F_4$  are $2$-connected. Let $X=\textup{Dand}_{2,n-2}$,  $V$ be any subset of $V(\textup{FS}(\textup{Dand}_{2,n-2},Y))$ with $|V|=1$.  We fix $x_0,y_0$, which are defined as above, for any $\sigma \in V(\textup{FS}(X,Y))\backslash V$, by applying Lemma \ref{zy}, we know there exists a vertex $\sigma'$ in the same connected component of $\textup{FS}(X,Y)-V$ as $\sigma$ such that $\sigma'(x_0)=y_0$. On the other hand, we take $G=\textup{FS}(X,Y)$, $G_1=F_3$, $G_2=F_4$, $U=\overline{V}$ and $V(\textup{FS}(X,Y))=U\cup W$. Then $G[U]=\textup{FS}(X,Y)[\overline{V}]\cong F_1\cup F_2$, and thus both $F_3$ and $F_4$ are $2$-connected by the above arguments and  {\rm(iii)} of Observation \ref{p2}.

		  Therefore,  we complete the proof by  $\textup{FS}(\textup{Dand}_{2,n-2},Y)=F_3\cup F_4$, where both $F_3$ and $F_4$ are $2$-connected.

		{\rm(ii)} Clearly, there exists $y_1\in V(Y)$ such that $Y-y_1$ is non-bipartite. Otherwise, $Y\cong C_n$ with odd $n$  by Lemma \ref{bz1}, and thus $\kappa(Y)=2$, a contradiction with $\kappa(Y)=3$.
		
		If $Y-y_1\cong C_{n-1}$, then $Y\cong W_n$ by $\kappa(Y)=3$. Thus $\textup{FS}(\textup{Dand}_{2,n-2},Y)$ is $2$-connected by $\Delta(\textup{Dand}_{2,n-2})=n-2$ and Lemma \ref{l2}.
		
		If $Y-y_1\cong \theta_0$, then $\theta_1\preceq Y$. Thus $\textup{FS}(\textup{Dand}_{2,6},Y)$ is $2$-connected by Lemma \ref{b1} and Lemma \ref{lem3}.
		
		Now we consider the case where $Y-y_1\not\in\{C_{n-1},\theta_0\}$. It is obvious that  $\textup{FS}(\textup{Dand}_{2,n-2}-x_0,Y-y_1)$ is $2$-connected by $\kappa(Y-y_1)\geq2$, $\textup{Dand}_{2,n-2}-x_0\cong S_{n-1}$, Theorem \ref{th1} and Lemma \ref{lem611}. Similar to the proof of Lemma \ref{l2}, we have that  $\textup{FS}(\textup{Dand}_{2,n-2},Y)$ is  $2$-connected.
	\end{proof}

		\vskip 0.4cm
			Let $X$ be a graph, $x_1,x_2\in V(X)$, we use $X+x_1x_2 $ to denote a new graph from $X$ by adding   the edge $x_1x_2$ to  $X$ for  $x_1x_2\notin E(X)$, and  $ X-x_1x_2$ to denote  removing the edge $x_1x_2$ from $X$ for $ x_1x_2\in E(X)$.

			\begin{lemma}\label{b2}
				Let $X$ and $Y$ be two connected bipartite graphs of order $n (\geq3)$, $x_1$ and $x_2$  belong to the same part of $V(X)$. If $\textup{FS}(X,Y)$ has exactly two connected components $F_1, F_2$,  and  each connected component is $l$-connected, where $1\leq l\leq (n-1)(n-2)$, then $\textup{FS}(X+x_1x_2,Y)$ is $l$-connected.
			\end{lemma}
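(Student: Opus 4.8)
The plan is to realize $\textup{FS}(X+x_1x_2,Y)$ as $\textup{FS}(X,Y)$ together with a tightly controlled set of extra edges, and then to apply {\rm(i)} of Observation \ref{p2} with $U=V(F_1)$ and $W=V(F_2)$. First, since $X\preceq X+x_1x_2$, Lemma \ref{lem3} gives $\textup{FS}(X,Y)\preceq\textup{FS}(X+x_1x_2,Y)$; in particular the two graphs share the same vertex set. By Definition \ref{d1}, the edges of $\textup{FS}(X+x_1x_2,Y)$ that are not already edges of $\textup{FS}(X,Y)$ are exactly those arising from swaps along the new edge $x_1x_2$, namely the pairs $\{\sigma,\sigma\circ(x_1\ x_2)\}$ with $\sigma(x_1)\sigma(x_2)\in E(Y)$. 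Denote this set of extra edges by $M$.

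The crucial step is to show that every edge of $M$ joins $F_1$ to $F_2$. Let $\{A_X,B_X\}$ and $\{A_Y,B_Y\}$ be bipartitions of $X$ and $Y$, and recall $g(\sigma)=|\sigma(A_X)\cap A_Y|+\frac{sgn(\sigma)+1}{2}$ from Lemma \ref{lem34}. Put $\sigma'=\sigma\circ(x_1\ x_2)$. Because $x_1,x_2$ lie in the same part of $V(X)$, the transposition $(x_1\ x_2)$ stabilizes the set $A_X$, so $\sigma'(A_X)=\sigma\big((x_1\ x_2)(A_X)\big)=\sigma(A_X)$ and hence $|\sigma'(A_X)\cap A_Y|=|\sigma(A_X)\cap A_Y|$; on the other hand $sgn(\sigma')=-sgn(\sigma)$ since $(x_1\ x_2)$ is a transposition. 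Thus $g(\sigma)$ and $g(\sigma')$ differ in parity, and by Lemma \ref{lem34} the two endpoints of every edge of $M$ lie in different connected components of $\textup{FS}(X,Y)$, one in $F_1$ and one in $F_2$. In particular $M$ has no edge inside $F_1$ or inside $F_2$, so $\textup{FS}(X+x_1x_2,Y)[V(F_1)]=F_1$ and $\textup{FS}(X+x_1x_2,Y)[V(F_2)]=F_2$, both $l$-connected by hypothesis.

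It remains to extract $l$ vertex-disjoint edges of $M$. The observation here is that $\iota:\sigma\mapsto\sigma\circ(x_1\ x_2)$ is a fixed-point-free involution on the vertex set (indeed $\sigma(x_1)\neq\sigma(x_2)$ as $\sigma$ is a bijection), that the condition $\sigma(x_1)\sigma(x_2)\in E(Y)$ is $\iota$-invariant, and that each edge of $M$ is a single orbit $\{\sigma,\iota(\sigma)\}$; since distinct orbits are disjoint, $M$ is itself a matching. Counting, the number of bijections $\sigma$ with $\sigma(x_1)\sigma(x_2)\in E(Y)$ is $2|E(Y)|(n-2)!$, so $|M|=|E(Y)|(n-2)!$. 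As $Y$ is connected, $|E(Y)|\geq n-1$, whence $|M|\geq(n-1)!\geq(n-1)(n-2)\geq l$ for $n\geq3$. Picking $l$ edges of $M$ then supplies $l$ vertex-disjoint edges between $F_1$ and $F_2$, and {\rm(i)} of Observation \ref{p2} yields that $\textup{FS}(X+x_1x_2,Y)$ is $l$-connected.

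The only genuine obstacle is the second step, verifying that the extra edges all cross between the two components; this is exactly where the hypothesis that $x_1,x_2$ lie in the same part is used, through the parity invariant of Lemma \ref{lem34}. Once that is secured, the matching structure of $M$ together with the crude bound $|M|\geq(n-1)!$ (which is precisely what the assumption $l\leq(n-1)(n-2)$ is calibrated against) makes the appeal to Observation \ref{p2} routine.
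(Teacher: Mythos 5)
Your proof is correct, and while it shares the paper's skeleton --- the same parity invariant $g$ from Lemma \ref{lem34} to show that swaps along $x_1x_2$ cross between $F_1$ and $F_2$, followed by an appeal to {\rm(i)} of Observation \ref{p2} --- it handles the key quantitative step in a genuinely different and cleaner way. The paper manufactures its vertex-disjoint crossing edges by hand: for each edge $e_i$ of $Y$ it fixes a bijection $\sigma_i$ with $\sigma_i(x_1)\sigma_i(x_2)=e_i$ and then composes with further transpositions $(x_{j_1}\ x_{j_2})$ inside a part of $X$ to spawn more pairs, which forces a four-case analysis on $|A_X|$ (with bounds like $|E(Y)|\bigl(1+\binom{r}{2}+\binom{n-2-r}{2}\bigr)\geq(n-1)(n-2)$) and a separate hand-check for $3\leq n\leq 5$. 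You instead observe that the \emph{entire} set $M$ of new edges is automatically a matching, because $\iota:\sigma\mapsto\sigma\circ(x_1\ x_2)$ is a fixed-point-free involution whose orbits are exactly the edges of $M$ and whose domain condition $\sigma(x_1)\sigma(x_2)\in E(Y)$ is $\iota$-invariant; the crude count $|M|=|E(Y)|(n-2)!\geq(n-1)!\geq(n-1)(n-2)\geq l$ then finishes uniformly for all $n\geq3$. (The one routine point you gloss --- that the new edges of $\textup{FS}(X+x_1x_2,Y)$ are precisely the $x_1x_2$-swaps --- holds because an edge of a friends-and-strangers graph determines its swap pair $\{a,b\}$ as the set of positions where the two bijections differ.) Your route buys the elimination of the paper's case analysis and small-$n$ computations, and even yields far more disjoint crossing edges than the stated bound $(n-1)(n-2)$ requires; the paper's explicit construction buys nothing extra here beyond being self-contained at the level of individual permutations.
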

			
			\begin{proof}
				 If $3\leq n\leq 5$, then the result can be obtained by hand calculation. In the following,  we consider the case when $n\geq 6$.
				
				Without loss of generality, let   $V(X)=V(Y)$, $\{A_{X},B_{X}\}$ be a  bipartition of $X$, $x_1,x_2\in A_{X}$, $\{A_Y,B_Y\}$ be a  bipartition of $Y$, $g(\sigma)=|\sigma(A_X)\cap A_Y|+\frac{sgn(\sigma)+1}{2}$ as  Lemma \ref{lem34},  $V(\textup{FS}(X,Y))=V_1\cup V_2$, where $V_1=\{\sigma|g(\sigma)\equiv 0\ (\text{mod} \ 2)\}, V_2=\{\sigma|g(\sigma)\equiv 1\ (\text{mod} \ 2)\}$. Then 
				$V_1=V(F_1)$, $V_2=V(F_2)$ by Lemma \ref{lem34} and $\textup{FS}(X,Y)=F_1\cup F_2$.
				
				Let $G=\textup{FS}(X+x_1x_2,Y)$. Then $V(G)=V_1\cup V_2$ and $F_1\preceq G[V_1]$, $F_2\preceq G[V_2]$. Therefore, $G[V_1]$ and $G[V_2]$ are $l$-connected since $F_1$ and $F_2$ are $l$-connected. Now we show there exist
				 $(n-1)(n-2)$ vertex-disjoint edges between $G[V_1]$ and $G[V_2]$, which implies $G=\textup{FS}(X+x_1x_2,Y)$ is $s$-connected by {\rm(i)} of Observation \ref{p2}. 
				
				Since $Y$ is a connected bipartite graph, we have $|E(Y)|\geq n-1$.  Without loss of generality, let $e_i$ be any edge of $ E(Y)$, where $1\leq i\leq |E(Y)|$. Then there exists $\sigma_i\in V(\textup{FS}(X+x_1x_2,Y))$ such that $\sigma_i(x_1)\sigma_i(x_2)=e_i$, $\sigma'_i=\sigma_i\circ(x_1 \ x_2)$. Then $\sigma_i\sigma'_i\in E(\textup{FS}(X+x_1x_2,Y))$. Moreover,   $\sigma_i$ and $\sigma'_i$ have different parity, and $|\sigma_i(A_X)\cap A_Y|=|\sigma'_i(A_X)\cap A_Y|$, which implies $g(\sigma_i)$ and $ g(\sigma'_i)$  have different parity. Thus one of $\sigma_i$ and $\sigma'_i$ is in $V_1$,  the other is in $V_2$ by Lemma \ref{lem34}, and $\sigma_i\sigma'_i$ is an edge between $G[V_1]$ and $G[V_2]$.
				
				 Clearly,  $|A_{X}|\geq2$ by $x_1,x_2\in A_{X}$. Then we complete the proof by the following four cases.
				
					\textbf{Case 1.} $|A_X|=2$.
					
				 Then $|B_X|= n-2$. Without loss of generality, let $B_X=\{x_j|3\leq j\leq n\}$,  $\sigma_{ij_1j_2}=\sigma_i \circ(x_{j_1} \ x_{j_2})$, $\sigma'_{ij_1j_2}=\sigma'_i \circ(x_{j_1} \ x_{j_2})$, $3\leq j_1<j_2\leq n$. Then $\sigma_{ij_1j_2}(x_1)\sigma_{ij_1j_2}(x_2)=e_i$ and $\sigma'_{ij_1j_2}=\sigma_{ij_1j_2}\circ(x_1 \ x_2)$. Similar to the proof of above, we have that $\sigma_{ij_1j_2}\sigma'_{ij_1j_2}\in E(\textup{FS}(X+x_1x_2,Y))$, and one of $\sigma_{ij_1j_2}$ and $\sigma'_{ij_1j_2}$ is in $V_1$,  the other is in $V_2$. Therefore, for each $i\in \{1,\cdots, |E(Y)| \}$,  there are at least $1+ \binom{n-2}{2}$ vertex-disjoint edges between $G[V_1]$ and $G[V_2]$ by $|B_X|= n-2$, and thus there are at least $|E(Y)|(1+\binom{n-2}{2})>(n-1)( n-2)$ vertex-disjoint edges between $G[V_1]$ and $G[V_2]$ by $|E(Y)|\geq n-1$ and $1\leq i\leq |E(Y)|$.
				 
				 	\textbf{Case 2.} $|A_X|=3$.
				
				Similar to the proof of Case 1, we have at least $|E(Y)|(1+\binom{n-3}{2})\geq (n-1)( n-2)$ vertex-disjoint edges between $G[V_1]$ and $G[V_2]$.
				
				\textbf{Case 3.} $|A_X|\geq4$ and $|B_X|\geq 2$.

					 Let $|A_X\setminus \{x_1,x_2\}|=r$, then $|B_X|=n-2-r$.  Similarly, we can adopt $x_{j_1}, x_{j_2}\in A_X\setminus \{x_1,x_2\}$, or $x_{j_1}, x_{j_2}\in B_X$, then there are  at least $|E(Y)|(1+\binom{r}{2}+\binom{n-2-r}{2})$ vertex-disjoint edges between $G[V_1]$ and $G[V_2]$. It is obvious that $|E(Y)|(1+\binom{r}{2}+\binom{n-2-r}{2})\geq (n-1)(1+\binom{r}{2}+\binom{n-2-r}{2})\geq (n-1)(n-2)$.
				
				\textbf{Case 4.} $|A_X|=n-1$ and $|B_X|= 1$.
				
				 Clearly,  $|A_X\setminus \{x_1,x_2\}|=n-3$, then there are at least $|E(Y)|(1+\binom{n-3}{2})\geq(n-1)( n-2)$ vertex-disjoint edges between $G[V_1]$ and $G[V_2]$ by similar arguments as above.

				We complete the proof.
			\end{proof}
			
			\begin{corollary}\label{c3}
				Let $Y$ be a $2$-connected graph of order $n\geq3$, $Y\notin \{C_n, \theta_0\}$, $\delta(Y)\geq s$. Then $\textup{FS}(S^+_n,Y)$  is $s$-connected.
			\end{corollary}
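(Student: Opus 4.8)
The plan is to dispose of the two regimes ``$Y$ non-bipartite'' and ``$Y$ bipartite'' separately, in both cases reducing everything to the behaviour of $\textup{FS}(S_n,Y)$, whose component connectivity is pinned down by Lemma~\ref{lem611}. Throughout I write $S_n^+=S_n+v_1v_2$, where $v_1,v_2$ are two leaves of the star $S_n$ (so that $\Delta(S_n^+)=n-1$), and I use repeatedly that $S_n\preceq S_n^+$, hence $\textup{FS}(S_n,Y)\preceq\textup{FS}(S_n^+,Y)$ by Lemma~\ref{lem3}. The case $n=3$ is vacuous, since the only $2$-connected graph on $3$ vertices is $C_3$, which is excluded; so I may assume $n\geq4$ and freely invoke Theorem~\ref{th1}.

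If $Y$ is non-bipartite, then since $Y$ is $2$-connected and $Y\notin\{C_n,\theta_0\}$, Theorem~\ref{th1} makes $\textup{FS}(S_n,Y)$ connected, and Lemma~\ref{lem611} gives that its connectivity equals $\delta(Y)\geq s$; thus $\textup{FS}(S_n,Y)$ is $s$-connected. Because $\textup{FS}(S_n,Y)$ is a spanning subgraph of $\textup{FS}(S_n^+,Y)$ and adjoining edges cannot destroy $s$-connectivity (after deleting any $s-1$ vertices the spanning subgraph is still connected, so the larger graph is connected too), $\textup{FS}(S_n^+,Y)$ is $s$-connected. This case requires nothing beyond the spanning-subgraph monotonicity of Lemma~\ref{lem3}.

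If $Y$ is bipartite, then $\textup{FS}(S_n,Y)$ is disconnected and the spanning-subgraph trick no longer applies, so I instead feed the structure into Lemma~\ref{b2}. By Theorem~\ref{th1}, $\textup{FS}(S_n,Y)$ has exactly two connected components, and by Lemma~\ref{lem611} each has connectivity $\delta(Y)\geq s$, hence each is $s$-connected. The two leaves $v_1,v_2$ lie in the same part of the bipartition of $S_n$ into its centre and its leaves, so $S_n$ and $Y$ satisfy the hypotheses of Lemma~\ref{b2} with $x_1=v_1$, $x_2=v_2$ and $l=s$ (note $s\leq\delta(Y)\leq n-1\leq(n-1)(n-2)$). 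Lemma~\ref{b2} then yields that $\textup{FS}(S_n+v_1v_2,Y)=\textup{FS}(S_n^+,Y)$ is $s$-connected, which completes the proof.

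The only genuine content lives in the bipartite case, and it is entirely outsourced to Lemma~\ref{b2}; the items to check by hand are merely that $s\leq\delta(Y)$ downgrades the established $\delta(Y)$-connectivity of each component to the required $s$-connectivity, and that $v_1,v_2$ do lie in a common part of $S_n$. I therefore expect no real obstacle: $\textup{FS}(S_n,Y)$ is either already $s$-connected (non-bipartite case) or splits into two $s$-connected halves stitched together across the added edge (bipartite case), and the corollary is a short packaging of Lemmas~\ref{lem611}, \ref{lem3} and~\ref{b2} together with Theorem~\ref{th1}.
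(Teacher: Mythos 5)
Your proof is correct and follows essentially the same route as the paper: split on whether $Y$ is bipartite, combine Theorem~\ref{th1} with Lemma~\ref{lem611} to get $s$-connectivity of $\textup{FS}(S_n,Y)$ or of its two components, then finish via Lemma~\ref{lem3} in the non-bipartite case and Lemma~\ref{b2} in the bipartite case. Your explicit verification of the hypotheses of Lemma~\ref{b2} (leaves in a common part, $s\leq(n-1)(n-2)$) and the observation that $n=3$ is vacuous are details the paper leaves implicit, but the argument is the same.
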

			\begin{proof}
				 If $Y$ is non-bipartite, then $\textup{FS}(S_n,Y)$  is $s$-connected by Theorem \ref{th1}  and  Lemma \ref{lem611}, and thus $\textup{FS}(S^+_n,Y)$ is $s$-connected by Lemma \ref{lem3}.
				 
				   If $Y$ is  bipartite, then $\textup{FS}(S_n,Y)$ has exactly two connected components and each connected component is  $s$-connected by Theorem \ref{th1} and Lemma \ref{lem611}, and thus  $\textup{FS}(S^+_n,Y)$ is $s$-connected by Lemma \ref{b2}.
			\end{proof}

Now we give the proof of Theorem \ref{mt21} and Theorem \ref{mt1}.

		\noindent\textbf{\textit{Proof of Theorem \ref{mt21}.}}
		We show {\rm(i)} and {\rm(ii)} in sequence.
		
		{\rm(i)} 	Let $T$ be a spanning tree of $X$ with $\Delta(T)=\Delta(X)$. Then $\textup{FS}(T, Y)\preceq \textup{FS}(X, Y)$. Therefore, it suffices to prove the conclusion when $X$ is a tree.
		
		If $X\cong S_n$, then  $\kappa(Y)\geq s$ by  $\Delta(X)+\kappa(Y)\geq n+s-1$ and $\Delta(X)=n-1$,  and thus $\textup{FS}(X,Y)$  has exactly  two connected components and each connected component is  $s$-connected by $Y\not\cong C_n$, Theorem \ref{th1} and Lemma \ref{lem611}. 
		
		If $X\not\cong S_n$, then $\Delta(X)\leq n-2$ and $\kappa(Y)\geq n+s-1-\Delta(X)\geq s+1\geq3$. Now we show the conclusion holds by the following arguments. Let $x_1\in V(X)$ such that $d_{X}(x_1)=\Delta (X)$. Then there exists a leaf $x_0$ of $X$ such that $x_0\not\in N_X[x_1]$ since  $X$ is a tree and $X\not\cong S_n$,  thus $\Delta(X-x_0)=\Delta(X)$ and $X-x_0$ is connected. Clearly, there exists $y_0\in V(Y)$ such that $Y-y_0\not\cong C_{n-1}$. Otherwise,  $Y-y\cong C_{n-1}$ for any $y\in V(Y)$, then $Y\cong W_4$ by $\kappa(Y)\geq3$, a contradiction.  We proceed by induction on $n$.
		
		If $3\leq n\leq 5$, then there  does not exist a tree $X\not\cong S_n$ and a bipartite graph $Y$ such that $\Delta(X)+\kappa(Y)\geq n+s-1$.
		
		 If $n=6$,  then $\Delta(X)+\kappa(Y)=7$ by $\Delta(X)+\kappa(Y)\geq 6+s-1$, $\Delta(X)\leq n-2$ and $
		\kappa(Y)\leq \lfloor \frac{n}{2}\rfloor$,  thus $s=2$,  $\Delta(X)= n-2=4$ and $
		\kappa(Y)= \lfloor \frac{n}{2}\rfloor=3$. Therefore,  $X\cong \textup{Dand}_{2,4}$ and $ Y\cong K_{3,3}$, the result holds since $\textup{FS}(\textup{Dand}_{2,4}, K_{3,3})$ has exactly  two connected components and each connected component is  $2$-connected by  direct calculation. 
		
		Assume that the result is true for $n-1$ $(n\geq 7)$.  
		We will proceed to prove the conclusion holds for $n$.
		
		Clearly, we have $\Delta(X-x_0)+\kappa(Y-y_0)=\Delta(X)+\kappa(Y-y_0)\geq n+s-2$. By the induction hypothesis, $\textup{FS}(X-x_0,Y-y_0)$ has exactly two connected components and each connected component is  $s$-connected. Let $\overline{V}=\{\sigma\in V(\textup{FS}(X,Y))|\sigma(x_0)=y_0\}$. Then  $\textup{FS}(X,Y)[\overline{V}]\cong \textup{FS}(X-x_0,Y-y_0)$ by Lemma \ref{l63}, thus $\textup{FS}(X,Y)[\overline{V}]$ has exactly two connected components and each connected component is  $s$-connected. Similar to proof of {\rm(i)} of Lemma \ref{l3}, we have that  $\textup{FS}(X,Y)$ has exactly two connected components and each connected component is  $s$-connected.
		
			{\rm(ii)}  Let $X\cong \textup{Dand}_{n-\Delta,\Delta}$, $\kappa(Y)=\kappa$, $\Delta+\kappa\leq n$, $1\leq\kappa\leq \lfloor \frac{n}{2}\rfloor$. Clearly, $\textup{FS}(\textup{Dand}_{n-\Delta,\Delta},Y)$ has at least two connected components by Lemma \ref{lem32}.    If $\textup{FS}(\textup{Dand}_{n-\Delta,\Delta},Y)$ has exactly two connected components, then $\textup{FS}(\textup{Lollipop}_{n-\Delta,\Delta},Y)$ is connected by Lemma \ref{b2} and Lemma \ref{lem3}, a contradiction with Theorem \ref{th4}. Thus $\textup{FS}(\textup{Dand}_{n-\Delta,\Delta},Y)$ has at least three connected components.
		 {\hfill $\blacksquare$ \par}
		 
		\vspace{0.5em}

\noindent\textbf{\textit{Proof of Theorem \ref{mt1}.}}
	 We complete the proof by the following two cases.

	\textbf{Case 1.} $Y$ is non-bipartite.
	
	Similarly,  it suffices to prove the result when $X$ is a tree.
	
	If  $X\cong S_n$, then $Y$ is $s$-connected, and thus the conclusion holds by Theorem \ref{th1}, Lemma \ref{lem611} and $\delta(Y)\geq \kappa(Y)\geq s$.

	If $X\not\cong S_n$,  then $\Delta(X)\leq n-2$, and $\kappa(Y)\geq n+s-1-\Delta(X)\geq s+1\geq3$.
	
		If $\kappa(Y)=3$, then $s=2$, $\Delta(X)=n-2$ by $\Delta(X)+\kappa(Y)\geq n+s-1$ and $\Delta(X)\leq n-2$, and thus $X\cong \textup{Dand}_{2,n-2}$. Therefore, the desired result holds by (\rm ii) of Lemma \ref{l3}.
	
	If $\kappa(Y)\geq 4$, then $n\geq 5$ by $\delta(Y)\geq \kappa(Y)\geq4$. Now we  proceed the proof by induction on $n$.    If $n=5$, then  $X\in\{P_5, \textup{Dand}_{2,3}\} $ and $Y\cong K_5$, and thus the result holds since both $\textup{FS}(P_5,K_5)$ and $\textup{FS}(\textup{Dand}_{2,3},K_5)$ are $4$-connected by direct calculation.
	
	Assume that conclusion is true for $n-1$ $(n\geq 6)$.  We now show it also holds for $n$.
	
	 Let $x_1\in V(X)$ such that $d_{X}(x_1)=\Delta (X)$. Then there exists a leaf $x_0$ of $X$ such that $x_0\not\in N_X[x_1]$ since  $X$ is a tree and $X\not\cong S_n$,  thus $\Delta(X-x_0)=\Delta(X)$ and $X-x_0$ is connected.
	 Clearly, for any $y\in V(Y)$, we have  $\kappa(Y-y)\geq \kappa(Y)-1\geq3$, and there exists $y_0\in V(Y)$ such that  $Y-y_0$ is a non-bipartite graph by Lemma \ref{bz1} and $Y\not\cong C_n$,  and  $\Delta(X-x_0)+\kappa(Y-y_0)\geq n-1+s-1$. Furthermore, we have $Y-y_0\notin \{C_{n-1}, \theta_0\}$ by $\kappa(Y)\geq4$, and then for $s\geq 2, X-x_0, Y-y_0$, we have 
 $\textup{FS}(X-x_0,Y-y_0)$  is $s$-connected 	
 by the induction hypothesis. 
 
 Let  $V$ be any subset of $\textup{FS}(X,Y)$ with $|V|=s-1$, $\overline{V}=\{\tau\in V(\textup{FS}(X,Y))| \tau(x_0)=y_0\}$. Then $\textup{FS}(X,Y)[\overline{V}]\cong \textup{FS}(X-x_0,Y-y_0)$ by Lemma \ref{l63}, and thus $\textup{FS}(X,Y)[\overline{V}]$  is $s$-connected. 	  We fix $x_0,y_0$,  which are defined as above, for any $\sigma \in V(\textup{FS}(X,Y))\backslash V$, by applying Lemma \ref{zy}, we know there exists a vertex $\sigma'$ in the same connected component of $\textup{FS}(X,Y)-V$ as $\sigma$ such that $\sigma'(x_0)=y_0$. On the other hand, we take $G=\textup{FS}(X,Y)$, $U=\overline{V}$ and $V(\textup{FS}(X,Y))=U\cup W$. Then $G[U]=\textup{FS}(X,Y)[\overline{V}]$ is $s$-connected, and thus $G=\textup{FS}(X,Y)$ is $s$-connected by the above arguments and  {\rm(ii)} of Observation \ref{p2}.

	\textbf{Case 2.}  $Y$ is bipartite and $X$ is non-bipartite.

		Let $T$ be a spanning tree of $X$ with $\Delta(T)=\Delta(X)$. Then $\Delta(T)+\kappa(Y)\geq n+s-1$, and thus  $\textup{FS}(T,Y)$  has exactly two connected components and each connected component is  $s$-connected by Theorem \ref{mt21}. 
	
	Let $\{A_{T},B_{T}\}$ be a  bipartition of $T$. Then $V(X)=V(T)=A_{T} \cup B_{T}$. Furthermore, there exist $\{x_2,x_3\}\subseteq A_T$ or $B_T$ such that $x_2x_3\in E(X)$ and $x_2x_3\not\in E(T)$. Otherwise, $X$ is a bipartite graph, a contradiction. Clearly,  $T+x_2x_3\preceq X$. Thus  $\textup{FS}(T+x_2x_3,Y)$ is $s$-connected by the above arguments, $s\leq \Delta(T)+\kappa(Y)-n+1\leq n-1\leq (n-1)(n-2)$ and Lemma \ref{b2}. Therefore, we have $\textup{FS}(X,Y)$ is $s$-connected by $T+x_2x_3\preceq X$ and  Lemma \ref{lem3}.
			 {\hfill $\blacksquare$ \par}

	 By Theorem \ref{mt1} and $\kappa(C_n)=\kappa(\theta_0)=2$, we have the following  corollary immediately.
	 \begin{corollary}\label{mt11}
	 	Let $s\geq2$ be an integer, $X$ and $Y$ be two connected graphs of order $n$, among which at least one is non-bipartite,  $\Delta(X)+\kappa(Y)\geq n+s-1$. Then 
	 	
	 	{\rm(i)} if $s=2$, $\Delta(X)=n-1$ and  $Y\not\in\{ C_n,\theta_0\}$,  then  $\textup{FS}(X,Y)$ is $2$-connected;
	 	
	 	{\rm(ii)} if $s=2$ and $\Delta(X)\leq n-2$, then $\textup{FS}(X,Y)$ is $2$-connected;
	 	
	 	{\rm(iii)} if $s\geq3$, then  $\textup{FS}(X,Y)$ is $s$-connected.
	 \end{corollary}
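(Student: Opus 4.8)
The plan is to obtain all three parts as direct consequences of Theorem \ref{mt1}, since the only hypothesis that Theorem \ref{mt1} requires beyond what the corollary assumes is the exclusion $Y\not\in\{C_n,\theta_0\}$. The one fact I would use to bridge the gap is that both exceptional graphs have connectivity exactly two, namely $\kappa(C_n)=\kappa(\theta_0)=2$. Consequently, whenever $Y$ is one of these graphs the standing inequality $\Delta(X)+\kappa(Y)\geq n+s-1$ becomes $\Delta(X)\geq n+s-3$, and the whole argument reduces to checking, in each regime, whether this lower bound on $\Delta(X)$ is compatible with the trivial bound $\Delta(X)\leq n-1$ and the regime's extra hypotheses.

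For part (i) I would simply observe that the hypothesis already contains $Y\not\in\{C_n,\theta_0\}$; together with $s=2$, $\Delta(X)=n-1$ this forces $\kappa(Y)\geq n+1-(n-1)=2$, so every hypothesis of Theorem \ref{mt1} is met and $2$-connectivity follows at once (no case analysis is needed, which is precisely why the exclusion must be imposed here, since $\Delta(X)=n-1$ with $\kappa(C_n)=2$ exactly saturates the inequality). For parts (ii) and (iii) I would instead argue by contradiction to rule out $Y\in\{C_n,\theta_0\}$ and then apply Theorem \ref{mt1} verbatim. In part (ii), assuming $Y\in\{C_n,\theta_0\}$ gives $\kappa(Y)=2$ and hence $\Delta(X)\geq (n+s-1)-2=n-1$ with $s=2$, contradicting $\Delta(X)\leq n-2$. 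In part (iii) the same substitution yields $\Delta(X)\geq n+s-3\geq n$ because $s\geq 3$, which is impossible as $\Delta(X)\leq n-1$. In both cases $Y\not\in\{C_n,\theta_0\}$, so Theorem \ref{mt1} delivers $s$-connectivity.

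I do not expect a genuinely hard step: the corollary is essentially a bookkeeping exercise in which the numerical slack created either by $\Delta(X)\leq n-2$ (part (ii)) or by $s\geq 3$ (part (iii)) is exactly what forbids the low-connectivity exceptional graphs, so that the exclusion in Theorem \ref{mt1} becomes automatic. The only points demanding care are to keep the roles of $X$ and $Y$ straight, since both the non-bipartite hypothesis and the exclusion attach to the graphs in the same way as in Theorem \ref{mt1}, and to invoke the elementary identities $\kappa(C_n)=\kappa(\theta_0)=2$ that power every inequality.
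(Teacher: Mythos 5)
Your proposal is correct and matches the paper exactly: the paper derives this corollary immediately from Theorem \ref{mt1} together with the fact $\kappa(C_n)=\kappa(\theta_0)=2$, and your case checks (that $\Delta(X)\leq n-2$ or $s\geq 3$ makes $Y\in\{C_n,\theta_0\}$ numerically impossible) are precisely the bookkeeping the paper leaves implicit in the word ``immediately.''
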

	 
	 	 \begin{proposition}\label{p4}
	 	For any integers $\Delta$ and $\kappa$ that satisfy $\Delta+\kappa\leq n$, $2\leq\Delta\leq n-1$, and $1\leq\kappa\leq n-2$, there exists a connected  graph $X$ of order $n$ such that $\Delta(X)=\Delta$,  for any connected  graph $Y$ with $\kappa(Y)=\kappa$, the graph $\textup{FS}(X,Y)$ is disconnected.
	 \end{proposition}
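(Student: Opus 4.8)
The plan is to reuse the dandelion construction that already appears in the proof of Theorem \ref{mt21}(ii); here the bipartite machinery (Lemmas \ref{lem32} and \ref{b2}) is unnecessary because we only need disconnectedness rather than a count of components. I would take the witness graph to be $X=\textup{Dand}_{n-\Delta,\Delta}$.

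First I would check that $X$ meets the structural requirements. By definition $\textup{Dand}_{n-\Delta,\Delta}$ is connected of order $n$, and its maximum degree is exactly $\Delta$: the center of the star $S_{\Delta}$ acquires precisely one extra edge from the identified end of $P_{n-\Delta+1}$, so it has degree $(\Delta-1)+1=\Delta$, while every other vertex has degree at most $2$. The hypotheses $2\leq\Delta\leq n-1$ are exactly what is needed for $\textup{Dand}_{n-\Delta,\Delta}$ to be well defined (star of order $\geq 2$, nontrivial attached path) and for the boundary cases $\Delta=2$ and $\Delta=n-1$ to still give $\Delta(X)=\Delta$.

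The core step is the spanning-subgraph relation $\textup{Dand}_{n-\Delta,\Delta}\preceq\textup{Lollipop}_{n-\Delta,\Delta}$ combined with Lemma \ref{th4}. For an arbitrary connected $Y$ with $\kappa(Y)=\kappa$, the hypothesis $\Delta+\kappa\leq n$ rearranges to $\kappa\leq n-\Delta<n-\Delta+1$, so $Y$ is not $(n-\Delta+1)$-connected. Since $2\leq\Delta\leq n$, Lemma \ref{th4} applies with $k=\Delta$ and forces $\textup{FS}(\textup{Lollipop}_{n-\Delta,\Delta},Y)$ to be disconnected.

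Finally I would invoke Lemma \ref{lem3}: from $\textup{Dand}_{n-\Delta,\Delta}\preceq\textup{Lollipop}_{n-\Delta,\Delta}$ and $Y\preceq Y$ we get $\textup{FS}(\textup{Dand}_{n-\Delta,\Delta},Y)\preceq\textup{FS}(\textup{Lollipop}_{n-\Delta,\Delta},Y)$, and a spanning subgraph of a disconnected graph is disconnected; hence $\textup{FS}(X,Y)$ is disconnected for every admissible $Y$, as required. There is essentially no hard step: the statement is a repackaging of Lemma \ref{th4} through the subgraph monotonicity of Lemma \ref{lem3}, so the only points demanding care are the elementary arithmetic turning $\Delta+\kappa\leq n$ into the failure of $(n-\Delta+1)$-connectivity, and the verification that $\Delta(\textup{Dand}_{n-\Delta,\Delta})=\Delta$ in the extreme cases $\Delta\in\{2,n-1\}$.
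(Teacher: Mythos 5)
Your proof is correct and takes essentially the same route as the paper: the paper's proof also picks a witness $X\in\textup{DL}_{n-\Delta,\Delta}$ (of which your $\textup{Dand}_{n-\Delta,\Delta}$ is an instance) and combines Lemma \ref{th4} for the lollipop with the subgraph monotonicity of Lemma \ref{lem3}, using $\kappa\leq n-\Delta<n-\Delta+1$ to rule out $(n-\Delta+1)$-connectivity of $Y$. Your explicit checks that $\Delta(\textup{Dand}_{n-\Delta,\Delta})=\Delta$ in the boundary cases $\Delta\in\{2,n-1\}$ simply spell out details the paper leaves implicit.
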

	 \begin{proof}
	 	Let $X\in DL_{n-\Delta,\Delta}$ with $2\leq \Delta=\Delta(X)\leq n-1$. Then $1\leq\kappa(Y)=\kappa\leq n-2$ and $\Delta(X)+\kappa(Y)\leq n$. By Lemma \ref{th4} and Lemma \ref{lem3}, we conclude that $\textup{FS}(X,Y)$ is disconnected. 
	 \end{proof}

Clearly, Proposition \ref{p4} implies that bound on $\Delta(X)+\kappa(Y)$  of Theorem 1.7 and Corollary \ref{mt11}, $n+1$ is the best. 

	\section{\textbf{Some applications of Theorem \ref{mt1}}}
		\subsection{\textbf{The proof of Theorems \ref{mt2} and \ref{mt3}}}
		
		
	
	
	
		

	\hspace{2em}By Theorem \ref{mt2}, we  have that Theorem \ref{mt3} holds immediately. So we only show Theorem \ref{mt2}.

	\vspace{0.5em}
	
\noindent\textbf{\textit{Proof of Theorem \ref{mt2}.}}  
{\rm(i)} Clearly, $Y$ is  non-bipartite,  $Y\not\in\{ C_n,\theta_0\}$ and $\Delta(X)+\kappa(Y)=n+\Delta(X)-1$ by $Y\cong K_n$, and thus $\textup{FS}(X,Y)$ is $\Delta(X)$-connected by  Theorem \ref{mt1}.

 {\rm(ii)}  By Lemma \ref{lem5} and $Y\not\cong K_n$, we have $\kappa(Y)\geq 2\delta(Y)+2-n$. Then
 $\Delta(X)+\kappa(Y)\geq \Delta(X)+2\delta(Y)+2-n\geq n+s$  by $\Delta(X)+2\delta(Y)\geq2n+s-2$. Moreover, we have $\delta(Y)\geq \frac{2n+s-2-\Delta(X)}{2}\geq \frac{n+1}{2}$. Thus $Y$ is non-bipartite and  $Y\not\in\{ C_n,\theta_0\}$. Therefore, $\textup{FS}(X,Y)$ is $(s+1)$-connected by  Theorem \ref{mt1}.
 {\hfill $\blacksquare$ \par}

	In fact, {\rm(i)} of Theorem \ref{mt2} improves upon the result of Lemma \ref{g}.
		
		\subsection{\textbf{The answer of Problem \ref{p1} and further results}}

		\begin{theorem}\label{mt4}
		Let $n, k\geq 2$ be two integers,  $n\geq k+1$, $X\in\textup{DL}_{n-k,k}$,   $Y$ be a graph of order $n$. Then  
		
		{\rm(i)} if $n=k+1$ and $X\cong\textup{Dand}_{1,n-1}$, then $\textup{FS}(X,Y)$ is connected if and only if $Y$ is $2$-connected non-bipartite graph and $Y\not\in\{C_n, \theta_0\}$;
		
       {\rm(ii)} if $n=k+1$, $X\not\cong\textup{Dand}_{1,n-1}$ and $Y\not\in\{C_n, \theta_0\}$, then $\textup{FS}(X,Y)$ is connected if and only if $Y$ is $2$-connected; 
		
		{\rm(iii)} if $k+2\leq n\leq 2k-2$ and $X\cong\textup{Dand}_{n-k,k}$, then $\textup{FS}(X,Y)$ is connected if and only if $Y$ is  non-bipartite and $(n-k+1)$-connected;

		{\rm(iv)} if $k+2\leq n\leq 2k-2$ and $X\not\cong\textup{Dand}_{n-k,k}$, then $\textup{FS}(X,Y)$ is connected if and only if $Y$ is $(n-k+1)$-connected;
		
		{\rm(v)} if $n\geq 2k-1$ and $n\neq k+1$, then $\textup{FS}(X,Y)$ is connected if and only if $Y$ is $(n-k+1)$-connected;
		
		{\rm(vi)}  $\textup{FS}(X,Y)$ is connected if and only if $\textup{FS}(X,Y)$ is $2$-connected.
	\end{theorem}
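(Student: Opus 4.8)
The plan is to reduce every part to three facts shared by all $X\in\textup{DL}_{n-k,k}$ and then to assemble the six statements by a case analysis on the size of $n$ relative to $k$. First I would record the structural data. For $n\geq k+1$ the junction vertex already has degree $k$ in $\textup{Dand}_{n-k,k}$ and still has degree $k$ in $\textup{Lollipop}_{n-k,k}$, while every other vertex has degree at most $k$; since $\textup{Dand}_{n-k,k}\preceq X\preceq\textup{Lollipop}_{n-k,k}$ this pins down $\Delta(X)=k$ for all $X\in\textup{DL}_{n-k,k}$. The same sandwich together with Lemma \ref{lem3} yields $\textup{FS}(\textup{Dand}_{n-k,k},Y)\preceq\textup{FS}(X,Y)\preceq\textup{FS}(\textup{Lollipop}_{n-k,k},Y)$. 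Finally I would note the bipartiteness dichotomy: $X$ is bipartite exactly when $X\cong\textup{Dand}_{n-k,k}$, because the dandelion is a tree while for $k\geq 3$ any edge of $X$ outside the dandelion joins two star-leaves, both adjacent to the junction, hence closes a triangle. I would also observe the identification $\textup{Dand}_{1,n-1}\cong S_n$, which turns part (i) into Wilson's theorem.

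Next I would dispatch the two implications in a uniform way. For necessity, if $\textup{FS}(X,Y)$ is connected then so is $\textup{FS}(\textup{Lollipop}_{n-k,k},Y)$ by the sandwich, so Lemma \ref{th4} forces $Y$ to be $(n-k+1)$-connected; and if moreover $X$ is bipartite (the dandelion case) then Lemma \ref{lem32} forces $Y$ non-bipartite. For sufficiency I would feed $\kappa(Y)\geq n-k+1$ into Theorem \ref{mt1} with $s=2$: since $\Delta(X)=k$ this gives $\Delta(X)+\kappa(Y)\geq n+1=n+s-1$, so the graph is $2$-connected provided (a) at least one of $X,Y$ is non-bipartite and (b) $Y\notin\{C_n,\theta_0\}$. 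Condition (b) is automatic whenever $n\geq k+2$, because then $\kappa(Y)\geq 3>2=\kappa(C_n)=\kappa(\theta_0)$.

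The crux, and the step I expect to be the main point, is the threshold observation that governs the case split: when $n\geq 2k-1$ every $(n-k+1)$-connected graph $Y$ is automatically non-bipartite. Indeed any bipartite graph on $n$ vertices has $\kappa(Y)\leq\delta(Y)\leq\lfloor n/2\rfloor$, and $n\geq 2k-1$ is precisely the inequality $\lfloor n/2\rfloor<n-k+1$. This is what allows part (v) to drop the non-bipartiteness hypothesis and be stated uniformly, and it clarifies why the middle regime $k+2\leq n\leq 2k-2$ must separate the dandelion (where a bipartite $Y$ can still be $(n-k+1)$-connected) from the other members of $\textup{DL}_{n-k,k}$.

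With these pieces the six parts follow. Part (i) is Theorem \ref{th1} under $X\cong S_n$. In part (ii) the graph $X$ is non-bipartite with $\Delta(X)=n-1$, so (a) is free and the standing hypothesis $Y\notin\{C_n,\theta_0\}$ supplies (b); hence connectivity is equivalent to $Y$ being $2$-connected. In part (iii), $X$ is the bipartite dandelion, so both necessities apply and are jointly sufficient by Theorem \ref{mt1}, while in part (iv), $X$ is non-bipartite and only $(n-k+1)$-connectivity is needed. In part (v) the threshold observation makes $(n-k+1)$-connectivity of $Y$ entail non-bipartiteness, so the characterization is uniform in $X$. Part (vi) is then formal: each sufficiency argument above actually produced a $2$-connected graph (through Theorem \ref{mt1}, or Theorem \ref{th1} and Corollary \ref{mt11} in part (i)), so connectedness already forces $2$-connectedness, and the reverse implication is trivial. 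Throughout I would check the degenerate boundaries $k=2$ (where $\textup{DL}_{n-k,k}=\{P_n\}$) and $n=3$ by hand, since Theorem \ref{th1} is quoted only for $n\geq 4$.
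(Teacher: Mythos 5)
Your proposal is correct and follows essentially the same route as the paper's proof: necessity via Lemma \ref{th4} together with the monotonicity Lemma \ref{lem3} and the bipartite obstruction Lemma \ref{lem32}; sufficiency via Theorem \ref{mt1} with $s=2$ using $\Delta(X)=k$ (so $\Delta(X)+\kappa(Y)\geq n+1$, with $Y\notin\{C_n,\theta_0\}$ automatic once $\kappa(Y)\geq 3$); the threshold $\delta(Y)\geq\kappa(Y)\geq n-k+1>\lfloor n/2\rfloor$ when $n\geq 2k-1$ for part (v); and part (vi) from the observation that every sufficiency step already yields $2$-connectedness. Your explicit bipartiteness dichotomy for $\textup{DL}_{n-k,k}$ and the hand-checks at the boundaries $k=2$ and $n=3$ are slightly more careful than the paper, which leaves these points implicit, but they do not change the argument.
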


\begin{proof}
	{\rm(i)}   If $X\cong\textup{Dand}_{1,n-1}$, then the result holds by Theorem \ref{th1} and $\textup{Dand}_{1,n-1}=S_n$.
	
	Now we prove  {\rm(ii)}-{\rm(v)}. 
	Clearly, $\Delta(X)=k$ by $X\in\textup{DL}_{n-k,k}$.
	
	Necessity.
	
	Let $\textup{FS}(X,Y)$ be a connected graph. If $\kappa(Y)\leq n-k$ and $n\geq k+1$, then $\textup{FS}(X,Y)$ is disconnected by Lemma \ref{th4} and Lemma \ref{lem3}, a contradiction. If $Y$ is a bipartite graph for  $k+2\leq n\leq 2k-2$ and $X\cong\textup{Dand}_{n-k,k}$, then $\textup{FS}(X,Y)$ is disconnected by Lemma \ref{lem32}, a contradiction. Now we show the sufficiency of {\rm(ii)}-{\rm(v)}.
	
	Sufficiency.

   {\rm(ii)} 	Let $Y$ be $2$-connected. By $n=k+1$ and  $X\not\cong\textup{Dand}_{1,n-1}$, we have $\Delta(X)=n-1$ and $X$ is non-bipartite, and thus  $\textup{FS}(X,Y)$ is connected by $\Delta(X)+\kappa (Y)\geq n+1$, $Y\not\in\{C_n, \theta_0\}$ and Theorem  \ref{mt1}. 
	
	{\rm(iii)}  Let $Y$ be non-bipartite, $(n-k+1)$-connected. Then $\Delta(X)+\kappa (Y)\geq n+1$. Thus $\textup{FS}(X,Y)$ is connected by  Theorem \ref{mt1}. 
	
{\rm(iv)} Let $Y$ be $(n-k+1)$-connected. Clearly,  $X$ is non-bipartite by  $X\not\cong\textup{Dand}_{n-k,k}$, and thus  $\textup{FS}(X,Y)$ is connected by $\Delta(X)+\kappa (Y)\geq n+1$ and Theorem  \ref{mt1}.

{\rm(v)} If $n\geq 2k-1$ and $Y$ is $(n-k+1)$-connected, then $Y$ is non-bipartite by $\delta(Y)\geq\kappa(Y)\geq n-k+1\geq \frac{n+1}{2}$, and thus $\textup{FS}(X,Y)$ is connected by $\Delta(X)+\kappa (Y)\geq n+1$ and Theorem  \ref{mt1}.

Finally, we show {\rm(vi)}.

{\rm(vi)} It is obvious that $\textup{FS}(X,Y)$ is connected if $\textup{FS}(X,Y)$ is $2$-connected. If $\textup{FS}(X,Y)$ is connected, then $Y$ is $(n-k+1)$-connected and at least one of $X$ and $Y$ is non-bipartite graph by {\rm(i)}-{\rm(v)}. Clearly, we have $\Delta(X)+\kappa (Y)\geq n+1$. Thus  $\textup{FS}(X,Y)$ is $2$-connected by Theorem \ref{mt1}.  We complete the proof.
\end{proof}  

\begin{remark}\label{r1}
	When $n\geq 8$, $X\in \textup{DL}_{n-k,k}$ with $n\geq k+1$, the connectedness of $\textup{FS}(X,Y)$  can be characterized completely by Theorem \ref{mt4} and Lemma \ref{lem33}.
\end{remark}

Based on Theorem \ref{mt4} and the conclusions of $n\geq 2k-1$ and $n=k$ in \cite{wang3}, we provide a complete answer to Problem \ref{p1} as follows.

\begin{proposition}\label{p5}
	Let $n\geq k\geq 2$. Then we have 
	
	{\rm(i)} for $n\geq 2k-1$, $\textup{FS}(\textup{Lollipop}_{n-k,k},Y)$ is connected if and only if $\textup{FS}(\textup{Dand}_{n-k,k},Y)$  is connected.
	
	{\rm(ii)} for $k\leq n\leq 2k-2$,  the statement of Problem \ref{p1} does not hold.
\end{proposition}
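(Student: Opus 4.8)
The plan is to reduce both halves of the proposition to the two connectivity characterizations already in hand. Lemma~\ref{th4} says that $\textup{FS}(\textup{Lollipop}_{n-k,k},Y)$ is connected exactly when $Y$ is $(n-k+1)$-connected, and Theorem~\ref{mt4} records exactly when $\textup{FS}(\textup{Dand}_{n-k,k},Y)$ is connected, since $\textup{Dand}_{n-k,k}\in\textup{DL}_{n-k,k}$. The whole statement then becomes a comparison of these two conditions across the two ranges of $n$, and the one structural fact I would use throughout is that $\textup{Dand}_{n-k,k}$ is a path and a star glued at a single vertex, hence a tree and in particular bipartite, whereas $\textup{Lollipop}_{n-k,k}$ contains $K_k$ and is therefore non-bipartite once $k\geq3$.

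For part~{\rm(i)}, assume $n\geq 2k-1$. In the generic subcase $n\neq k+1$, Theorem~\ref{mt4}{\rm(v)} asserts that $\textup{FS}(\textup{Dand}_{n-k,k},Y)$ is connected if and only if $Y$ is $(n-k+1)$-connected, which is word for word the condition Lemma~\ref{th4} attaches to the lollipop; hence the two friends-and-strangers graphs are connected for exactly the same $Y$, and the equivalence is immediate. The only pair with $n\geq 2k-1$ and $n=k+1$ is $k=2$, $n=3$, where $\textup{Lollipop}_{1,2}=\textup{Dand}_{1,2}=P_3$, so the equivalence holds trivially. (Equivalently, this range is the one already settled in \cite{wang3}.)

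For part~{\rm(ii)}, I would exhibit, for every $(n,k)$ with $k\leq n\leq 2k-2$ and $n\geq3$, a single graph $Y$ witnessing the failure of the equivalence, and the clean choice is the balanced complete bipartite graph $Y=K_{\lceil n/2\rceil,\lfloor n/2\rfloor}$, which is $\lfloor n/2\rfloor$-connected. The constraint $n\leq 2k-2$ gives $n-k+1\leq n/2$, and since $n-k+1$ is an integer this yields $n-k+1\leq\lfloor n/2\rfloor$; thus $Y$ is $(n-k+1)$-connected, so $\textup{FS}(\textup{Lollipop}_{n-k,k},Y)$ is connected by Lemma~\ref{th4}. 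On the other hand $\textup{Dand}_{n-k,k}$ and $Y$ are both bipartite, so Lemma~\ref{lem32} forces $\textup{FS}(\textup{Dand}_{n-k,k},Y)$ to be disconnected. Hence for this $Y$ the two sides of Problem~\ref{p1} disagree, and the statement of Problem~\ref{p1} does not hold on this range.

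The main point, and the only real obstacle, is conceptual rather than computational: one has to recognize that the tree structure of the dandelion makes bipartiteness the decisive obstruction, and then check that the connectivity threshold $n-k+1$ forced by the lollipop can still be realized by a \emph{bipartite} graph on $n$ vertices. This is exactly the inequality $n-k+1\leq\lfloor n/2\rfloor$, which holds on $k\leq n\leq 2k-2$ but fails for $n\geq 2k-1$ (there $n-k+1\geq k>\lfloor n/2\rfloor$, so no bipartite graph can meet the threshold), cleanly explaining why $2k-1$ is the dividing line. The one delicate corner to isolate is the degenerate case $n=k=2$, where $\textup{Lollipop}_{0,2}=\textup{Dand}_{0,2}=K_2$ and Lemma~\ref{lem32} does not apply because it requires $n\geq3$; there the equivalence holds trivially, so it should be read out of the failure claim.
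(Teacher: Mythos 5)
Your proposal is correct and follows the same reduction the paper intends, but it is substantially more self-contained than what the paper actually writes. The paper's own ``proof'' is a one-line citation: part (i) is delegated to \cite{wang3} (equivalently, as you do, to Theorem \ref{mt4}(v) matched against Lemma \ref{th4}, with the lone corner $(n,k)=(3,2)$ where $\textup{Lollipop}_{1,2}=\textup{Dand}_{1,2}=P_3$), and part (ii) is left to Theorem \ref{mt4}(i),(iii) plus the $n=k$ case quoted from \cite{wang3}. What the paper never exhibits is a bipartite $(n-k+1)$-connected graph $Y$ on $n$ vertices, which is precisely what makes the two criteria come apart on $k\le n\le 2k-2$. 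Your witness $Y=K_{\lceil n/2\rceil,\lfloor n/2\rfloor}$, together with the inequality $n-k+1\le\lfloor n/2\rfloor$ (valid since $n\le 2k-2$ gives $n-k+1\le n/2$ and the left side is an integer), supplies this uniformly across the whole range, handling $n=k$ and $n=k+1$ in one stroke via Lemma \ref{lem32} and Lemma \ref{th4} and avoiding any appeal to \cite{wang3}. That is a cleaner argument than the paper's, and it also cleanly explains the dividing line $n=2k-1$.

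Two remarks. First, your flag on $(n,k)=(2,2)$ is a genuine catch: there $\textup{Lollipop}_{0,2}=\textup{Dand}_{0,2}=K_2$, so the equivalence of Problem \ref{p1} holds trivially for every $Y$, and part (ii) as literally stated (which includes $k=2$, $n=2$) is false at this corner; the proposition should require $k\ge3$, and your witness argument proves exactly that corrected statement, since within the range one has $n\ge3$ if and only if $k\ge3$. Second, a small slip in your expository aside: the chain $n-k+1\ge k>\lfloor n/2\rfloor$ fails for even $n\ge 2k$, where $\lfloor n/2\rfloor\ge k$. The correct direct estimate is $n-k+1-\lfloor n/2\rfloor\ge n/2-k+1\ge\frac{2k-1}{2}-k+1=\frac{1}{2}>0$ whenever $n\ge 2k-1$, which still gives the intended conclusion that no bipartite graph can meet the threshold there. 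Since this occurs in an explanatory remark rather than in the proof of either part, nothing in your argument breaks.
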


		  Next, we  provide a sufficient  condition in terms of $\kappa(X) + \kappa(Y)$ such that $\textup{FS}(X,Y)$ is connected.
		  
		  \begin{theorem}\label{th6}
		  	Let $X$ and $Y$ be two  connected graphs of order $n$. 
		  	
		  	{\rm(i)} If $\kappa(X)+\kappa(Y)\geq n+1$, then $\textup{FS}(X,Y)$ is $2$-connected. 
		  	
		  	{\rm(ii)} If $\kappa(X)+\kappa(Y)= n$ and $n$ is an odd number, then $\textup{FS}(X,Y)$ is connected. 
		  	
		  	{\rm(iii)} If $n(\geq5)$ is an even  number, then there exist connected graphs $X$ and $Y$ of order $n$ such that $\kappa(X)+\kappa(Y)= n$, and $\textup{FS}(X,Y)$ is disconnected.
		  	
		  	{\rm(iv)} If $n\geq5$, then there exist connected graphs $X$ and $Y$ of order $n$ such that $\kappa(X)+\kappa(Y)= n-1$, and $\textup{FS}(X,Y)$ is disconnected.
		  \end{theorem}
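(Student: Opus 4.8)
The plan is to deduce the two positive statements (i) and (ii) from the main Theorem \ref{mt1} by trading the connectivity hypothesis on $X$ for the degree hypothesis it actually needs, via the elementary chain $\kappa(G)\le\delta(G)\le\Delta(G)$, and to obtain the two sharpness statements (iii) and (iv) from the bipartite obstruction of Lemma \ref{lem32}. Throughout I would use the symmetry $\textup{FS}(X,Y)\cong\textup{FS}(Y,X)$ to arrange $\kappa(Y)\ge\kappa(X)$.

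For (i), the bound $\kappa(X)+\kappa(Y)\ge n+1$ with $\kappa(Y)\ge\kappa(X)$ forces $\kappa(Y)\ge\frac{n+1}{2}>\frac n2$. Since a bipartite graph of order $n$ has minimum degree at most $\lfloor n/2\rfloor$, the inequality $\delta(Y)\ge\kappa(Y)>n/2$ shows $Y$ is non-bipartite, and $\kappa(Y)>2$ rules out $Y\in\{C_n,\theta_0\}$ (both of connectivity $2$). Then $\Delta(X)+\kappa(Y)\ge\kappa(X)+\kappa(Y)\ge n+1$, so Theorem \ref{mt1} with $s=2$ gives that $\textup{FS}(X,Y)$ is $2$-connected. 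The real content is in (ii), where the hypothesis only yields $\Delta(X)+\kappa(Y)\ge n$, one short of what Theorem \ref{mt1} needs, and the parity of $n$ is what recovers the missing unit. Again taking $\kappa(Y)\ge\kappa(X)$: since $n$ is odd and $\kappa(X)+\kappa(Y)=n$, the two connectivities have opposite parities, so they are distinct and $\kappa(Y)\ge\frac{n+1}{2}>\frac n2$, whence $Y$ is non-bipartite and $Y\notin\{C_n,\theta_0\}$ exactly as in (i). If $\Delta(X)>\kappa(X)$, then $\Delta(X)+\kappa(Y)\ge n+1$ and Theorem \ref{mt1} ($s=2$) applies directly. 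Otherwise $\Delta(X)=\delta(X)=\kappa(X)$, i.e.\ $X$ is $\kappa(X)$-regular; because a $d$-regular graph on an odd number of vertices must have $d$ even, this forces $\kappa(X)$ even and hence $\kappa(Y)=n-\kappa(X)$ odd, so by the same parity argument $Y$ cannot be $\kappa(Y)$-regular, giving $\Delta(Y)>\kappa(Y)$ and $\Delta(Y)+\kappa(X)\ge n+1$; applying Theorem \ref{mt1} with $s=2$ to $\textup{FS}(Y,X)\cong\textup{FS}(X,Y)$ then finishes the case, as long as the second argument $X\notin\{C_n,\theta_0\}$.

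I expect the residual case $X\cong C_n$ to be the main obstacle, since there the swap cannot invoke Theorem \ref{mt1}. Note first that $\theta_0$ is not regular ($\Delta(\theta_0)=3\ne2=\kappa(\theta_0)$), so $X=\theta_0$ cannot occur under $\Delta(X)=\kappa(X)$; and a connected $2$-regular graph is a single cycle, so the only surviving possibility is $\kappa(X)=2$ with $X\cong C_n$. Here $\kappa(Y)=n-2$, so $\delta(Y)\ge n-2$ and $\overline{Y}$ has maximum degree at most $1$, i.e.\ $\overline{Y}$ is a matching together with isolated vertices. Because $n$ is odd, $\overline{Y}$ cannot be a perfect matching, so it has at least one isolated vertex, a component of order $1$; hence the greatest common divisor of the component orders of $\overline{Y}$ is $1$, and Lemma \ref{lem33} shows $\textup{FS}(C_n,Y)$ is connected. (The degenerate small orders, $n\le4$ for (i) and $n=3$ for (ii), fall outside these estimates and would be checked by hand.)

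Finally, (iii) and (iv) are sharpness constructions resting on Lemma \ref{lem32}, that any two bipartite graphs of order $n\ge3$ yield a disconnected $\textup{FS}$. For (iii), with $n$ even I would take $X=Y=K_{n/2,n/2}$; these are connected bipartite graphs of order $n$ with $\kappa(X)+\kappa(Y)=\frac n2+\frac n2=n$, so $\textup{FS}(X,Y)$ is disconnected. For (iv), with $n$ odd take $X=Y=K_{(n-1)/2,(n+1)/2}$, giving $\kappa(X)+\kappa(Y)=(n-1)$, and with $n$ even take $X=K_{n/2,n/2}$ and $Y=K_{n/2-1,\,n/2+1}$, giving $\kappa(X)+\kappa(Y)=\frac n2+(\frac n2-1)=n-1$; in each case both factors are connected bipartite graphs of order $n$, so Lemma \ref{lem32} again makes $\textup{FS}(X,Y)$ disconnected. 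These last two parts confirm that the thresholds $n+1$ and $n$ in (i) and (ii) are best possible.
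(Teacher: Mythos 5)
Your proposal is correct, and while it rests on the same three pillars as the paper's proof---Theorem \ref{mt1} entered via the chain $\kappa\le\delta\le\Delta$, the handshake-parity observation that a regular graph on an odd number of vertices has even degree, and Lemma \ref{lem33} for the residual cycle case---its organization differs in two places, both to its advantage. In (ii), the paper names the odd-connectivity graph $X$ and must then separately dispose of $Y\in\{C_n,\theta_0\}$; the $\theta_0$ subcase costs it a bespoke computation (assembling $\textup{FS}(K_7-3e,\theta_0)$ from $\textup{FS}(K_6-2e,C_6)$ by a Lemma \ref{l2}-style gluing). You instead orient by size, placing the high-connectivity graph in the second slot so that $\kappa(Y)\ge\frac{n+1}{2}$ forces $Y$ non-bipartite and $Y\notin\{C_n,\theta_0\}$ simultaneously, and you exploit the asymmetry of Theorem \ref{mt1} (only the \emph{second} argument must avoid $\{C_n,\theta_0\}$): in your remaining case $X$ is regular, so $X\not\cong\theta_0$ since $\theta_0$ is not regular, and the paper's hardest subcase simply never arises; only $X\cong C_n$ survives, which you settle exactly as the paper does, via $\overline{Y}$ being a matching plus at least one isolated vertex because $n$ is odd. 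Your (i) is likewise cleaner: the same orientation trick makes $Y\notin\{C_n,\theta_0\}$ automatic for $n\ge4$, whereas the paper splits off $Y\in\{C_n,\theta_0\}$ and invokes Theorem \ref{mt2}(i), which is stated only for $n\ge6$. For (iii) and (iv) you use pairs of complete bipartite graphs with Lemma \ref{lem32} where the paper uses $C_n$ against dense graphs with Lemma \ref{lem33}; both are valid, and yours is the more elementary, resting only on $\kappa(K_{a,b})=\min(a,b)$ and the universal bipartite obstruction. The one caveat is the small orders you flag yourself: for $n=3$ the bound $\kappa(Y)\ge\frac{n+1}{2}$ no longer excludes $Y\cong C_3=K_3$, so the promised hand-checks (e.g., $\textup{FS}(K_3,K_3)\cong K_{3,3}$ is $2$-connected, $\textup{FS}(P_3,K_3)\cong C_6$ is connected) are genuinely needed and should be written out; with those supplied, the argument is complete.
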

		  
		  \begin{proof}
		  	{\rm(i)} It is clear that $\Delta(X)+\kappa(Y)\geq n+1$ by  $\kappa(X)+\kappa(Y)\geq n+1$ and $\Delta(X)\geq \delta(X)\geq \kappa(X)$. Now we show $\textup{FS}(X,Y)$ is $2$-connected.
		  	
		  	 If $Y\in\{C_n, \theta_0\}$, then $\kappa(Y)=2$, and thus $\kappa(X)\geq n-1$, which implies $X\cong K_n$. Therefore, we have $\textup{FS}(X,Y)$ is $2$-connected by {\rm(i)} of Theorem \ref{mt2}. 
		  	 
		  	 If $Y\not\in\{C_n, \theta_0\}$, without loss of generality, we assume $\kappa(X)\geq \lceil \frac{n+1}{2}\rceil$. Then $X$ is non-bipartite by $\kappa(X)\leq \delta(X)$. Thus $\textup{FS}(X,Y)$ is $2$-connected by  Theorem \ref{mt1}. 
		  	 
		  {\rm(ii)} We have $\kappa(X)$ or $\kappa(Y)$ is an odd number by	$\kappa(X)+\kappa(Y)= n$ and $n$ is an odd number. Without loss of generality, we suppose  $\kappa(X)$  is an odd number. Then $\Delta(X)\geq \kappa(X)+1$. Otherwise, $\Delta(X)=\kappa(X)\leq \delta(X)$, which implies $d_X(x)=\kappa(X)$ for any $x\in V(X)$, and  $\sum\limits_{x\in V(X)}d_X(x)=n\kappa(X)$ is an odd number, a contradiction. Thus $\Delta(X)+\kappa(Y)\geq \kappa(X)+\kappa(Y)+1\geq n+1$. Furthermore, let $\kappa(X)\geq \kappa(Y)$. Then $\kappa(X)>\frac{n}{2}$ since $n$ is odd, and thus $X$ is non-bipartite.  We complete the proof by the following two cases.
		  	
		  	\textbf{Case 1}: $Y\not\in\{C_n, \theta_0\}$.
		  	
		  	It is obvious that $\textup{FS}(X,Y)$ is $2$-connected by Theorem \ref{mt1}.
		  	
		  	\textbf{Case 2}: $Y\in\{C_n, \theta_0\}$.
		  	
		  	Clearly, we have  $\kappa(Y)=2$. Thus $\kappa(X)=n-2$, which implies $K_n-\frac{n-1}{2}e\preceq X$. 
		  	
		  	If $Y\cong C_n$, then $\textup{FS}(K_n-\frac{n-1}{2}e,Y)$ is connected by Lemma \ref{lem33}. Thus $\textup{FS}(X,Y)$ is connected by Lemma \ref{lem3}.
		  	
		  	If $Y\cong \theta_0$, then there exists $y_0\in V(Y)$ such that $Y-y_0\cong C_6$. Meanwhile, there exists $z_0\in V(K_7-3e)$ such that $K_7-3e-z_0\cong K_6-2e$, where $d_{K_7-3e}(z_0)=5$. Clearly, we have $\textup{FS}(K_7-3e-z_0, Y-y_0)$ is connected by Lemma \ref{lem33}. Similar to the proof of Lemma \ref{l2}, we have that  $\textup{FS}(K_7-3e, Y)$ is connected, which implies $\textup{FS}(X, Y)$ is connected.

		  	{\rm(iii)} Let $X\cong C_n$, $Y\cong K_n-\frac{n}{2}e$.  Clearly, we have $\kappa(K_n-\frac{n}{2}e)=n-2$  by $n (\geq 5)$ is an even number, which implies $\kappa(X)+\kappa(Y)=\kappa(C_n)+\kappa(K_n-\frac{n}{2}e)= n$.  By  $\overline{Y}=\frac{n}{2}K_2$ and Lemma \ref{lem33}, we have that $\textup{FS}(X,Y)$ is disconnected.
		  	
		  	{\rm(iv)} 	 Let $X\cong C_n$, $\overline{Y}\cong C_n$. Then $Y$ is   connected  by $n\geq 5$ with $\kappa(Y)=n-3$, and thus $\kappa(X)+\kappa(Y)=n-1$. By Lemma \ref{lem33}, we have that $\textup{FS}(X,Y)$ is disconnected.
		  \end{proof}

Clearly, {\rm(iii)} and {\rm(iv)} of Theorem \ref{th6} imply that conditions  of {\rm(i)} and {\rm(ii)} of Theorem \ref{th6} are the best. 

Furthermore, by {\rm(vi)} of Theorem \ref{mt4}, when $X\in \textup{DL}_{n-k,k}$ with $n\geq k+1$, $\textup{FS}(X,Y)$ is connected if and only if $\textup{FS}(X,Y)$ is $2$-connected. A natural question is what graphs other than in $\textup{DL}_{n-k,k}$ satisfy this property?

	\section*{\bf Funding}
	
	\hspace{1.5em} This work is  supported by the National Natural Science Foundation of China (Grant Nos. 12371347, 12271337).

\end{document}